\documentclass[sn-mathphys-ay]{sn-jnl}
\usepackage[T1]{fontenc}
\usepackage{amsmath,amssymb,amsthm,amsfonts}
\usepackage{mathrsfs}
\usepackage{xcolor}
\usepackage{textcomp}
\usepackage{mathtools}
\usepackage[expansion=false]{microtype}
\usepackage[shortlabels]{enumitem}
\usepackage{bm}
\usepackage[nameinlink,capitalise]{cleveref}

\newcommand\xqed[1]{%
  \leavevmode\unskip\penalty9999 \hbox{}\nobreak\hfill
  \quad\hbox{{#1}}}
\newcommand\qeddef{\xqed{${\scriptstyle\spadesuit}$}}
\newcommand\qedthm{\xqed{${\scriptstyle\blacksquare}$}}

\setcitestyle{round}
\allowdisplaybreaks
\numberwithin{equation}{section}

\definecolor{linkcolor}{RGB}{87,41,40}
\hypersetup{
  colorlinks=true,
  linkcolor=linkcolor,
  citecolor=linkcolor,
  urlcolor=linkcolor
}

\theoremstyle{definition}
\newtheorem{theorem}{Theorem}[section]
\newtheorem{proposition}[theorem]{Proposition}
\newtheorem{lemma}[theorem]{Lemma}
\newtheorem{corollary}[theorem]{Corollary}
\newtheorem{conj}[theorem]{Conjecture}

\theoremstyle{definition}
\newtheorem{definition}[theorem]{Definition}

\theoremstyle{remark}
\newtheorem{remark}[theorem]{Remark}

\crefname{theorem}{Theorem}{Theorems}
\Crefname{theorem}{Theorem}{Theorems}
\crefname{proposition}{Proposition}{Propositions}
\Crefname{proposition}{Proposition}{Propositions}
\crefname{lemma}{Lemma}{Lemmas}
\Crefname{lemma}{Lemma}{Lemmas}
\crefname{corollary}{Corollary}{Corollaries}
\Crefname{corollary}{Corollary}{Corollaries}
\crefname{conj}{Conjecture}{Conjectures}
\Crefname{conj}{Conjecture}{Conjectures}
\crefname{definition}{Definition}{Definitions}
\Crefname{definition}{Definition}{Definitions}
\crefname{example}{Example}{Examples}
\Crefname{example}{Example}{Examples}
\crefname{remark}{Remark}{Remarks}
\Crefname{remark}{Remark}{Remarks}
\crefname{section}{Section}{Sections}
\Crefname{section}{Section}{Sections}

\newcommand{\term}[1]{\textbf{#1}}
\newcommand{\N}{\mathbb{N}}
\newcommand{\R}{\mathbb{R}}
\newcommand{\Borel}{\mathcal{B}}
\newcommand{\Clop}{\mathcal{C}}
\newcommand{\Ucal}{\mathcal{U}}
\newcommand{\Vcal}{\mathcal{V}}
\newcommand{\Wcal}{\mathcal{W}}
\newcommand{\Fcal}{\mathcal{F}}
\newcommand{\Acal}{\mathcal{A}}
\newcommand{\ind}{\mathbf{1}}
\newcommand{\symdiff}{\mathbin{\triangle}}
\newcommand{\given}{\,|\,}
\newcommand{\eps}{\varepsilon}
\newcommand{\set}[1]{\left\{#1\right\}}
\newcommand{\abs}[1]{\left|#1\right|}

\newcommand{\restrict}{\mathord{\upharpoonright}}

\usepackage{titlesec}

\titleformat{\section}
  {\normalfont\Large\bfseries\raggedright} 
  {\thesection.}               
  {1em}                        
  {}                           
\titleformat{\subsection}
  {\normalfont\large\bfseries\raggedright}
  {\thesubsection.}
  {1em}
  {}
\begin{document}

\title[The Truth Is the Hardest Thing]{Almost-Uniform Bayesian Convergence to the Truth Is Not Characterized by Countable Additivity on Conditional Hitting Times}

\author[1]{\fnm{M. Ali} \sur{Khan}}\email{akhan@jhu.edu}

\author*[2]{\fnm{Arthur Paul} \sur{Pedersen}}\email{appedersen@ccny.cuny.edu}

\author[3]{\fnm{Maxwell B.} \sur{Stinchcombe}}\email{max.stinchcombe@gmail.com}

\affil[1]{\orgdiv{Department of Economics}, \orgname{The Johns Hopkins University}}

\affil*[2]{\orgdiv{Department of Computer Science}, \orgname{The City College of New York \& The Graduate Center, The City University of New York}}

\affil[3]{\orgdiv{Department of Economics}, \orgname{The University of Texas at Austin}}

\abstract{We revisit the analysis of Bayesian convergence to the truth under
finite additivity in a recent paper by \citet{nielsen2021convergence}. Its
principal theorem proves that the posteriors of a probability function converge
to the truth almost uniformly if and only if the function has two properties: an
\textit{approximation property}, and \textit{countable additivity on conditional
hitting times}. We show that the two properties are necessary but not
sufficient, so that the theorem is false, and we locate the error in its
published proof. We construct a merely finitely additive probability function
that has both properties and whose posteriors converge to the truth almost
surely but not almost uniformly. Three of the paper's four remaining theorems,
and its corollary, lose their published proofs as well, two with the failed
implication and two to a separate defect that we also identify. Two of the four
results we reprove and one we leave undecided; the last is the corollary, which
our counterexample does not refute, and which we establish for a family
including the counterexample and leave open in general. We also show that
almost-sure convergence of posteriors to the truth for every event does not
characterize countable additivity. We close with what a repaired
characterization of almost-uniform convergence would have to add.}

\keywords{finite additivity, Bayesian convergence, almost-uniform convergence, conditional hitting times, ultrafilters}

\pacs[MSC Classification]{60A05; 60A10; 28A12; 28A20; 28A33; 03E05; 62A01; 62F15}

\maketitle

\section{Introduction}

Bayesian convergence-to-the-truth theorems are usually proved under countable
additivity, commonly through martingale convergence
\citep{levy1937theorie,billingsley1995prob}.  Whether countable additivity should
be imposed on subjective probability is a foundational question,
and finitely additive probability has a long history in decision theory and the
foundations of probability
\citep{deFinettiArtofGuessing,rao1983charges,purves_sudderth_1976}.

What hangs on the question is the reach of an epistemological argument.
The practice of appealing to convergence-to-the-truth theorems has become  a canonical  instrument of philosophical argument. In epistemology,  they have been used to argue that Bayesianism has more to offer our troubled minds than internal consistency  at any given time:  The discipline imposed by Bayesian learning   on changing states of mind is assured to track the truth
in the long run.  In the philosophy of science, the same theorems
have been enlisted on the realist side of the debate with the skeptic over
whether inquiry can home in on the truth.   Both appeals are contested, and contested
on the ground that they turn on countable additivity.
\citet[p.~181]{juhl1994realism} argue that countable additivity, far from being
the ``technical convenience'' it is sometimes billed as, is ``a powerful axiom
for scientific realism that should be subjected to the greatest possible
philosophical scrutiny''; and the dispute over whether the convergence theorems
commit Bayesians to an immodest certainty turns on the same axiom
\citep{belot2013bayesian,elga2016bayesian,nielsen2019obligation}.

We adjudicate
none of that.  We observe only that its terms depend on which convergence
results hold and under what assumptions: a guarantee that requires countable
additivity is unavailable where degrees of belief are merely finitely additive
--- finitely but not countably additive.  So the question of which modes of
convergence survive the weakening is the question of which learning guarantees
remain, and in what form.  That is the question
\citet{nielsen2021convergence} sets out to answer.

Among the modes of convergence he studies is \textit{almost-uniform
convergence}, which is stronger than almost-sure convergence.  Almost-sure
convergence requires the posterior probability of a hypothesis to approach its
truth value outside an event of probability zero.  Almost-uniform convergence
requires it to do so uniformly outside an event of arbitrarily small
probability.

\citet{nielsen2021convergence} claims that two additional criteria, the
\textit{approximation property} and \textit{countable additivity on conditional
hitting times} (CHT), are individually necessary and jointly sufficient for
almost-uniform convergence to the truth:
\begin{align}
  \text{almost-uniform convergence}
    \quad&\Longleftrightarrow\quad
    \text{approximation and CHT}.
    \label{eq:characterization}
\end{align}
Read from left to right, \eqref{eq:characterization} says that the two criteria
are \emph{necessary}; read from right to left, that they are \emph{sufficient}.
The approximation property requires every event to be approximated by an event
whose truth is decided after finitely many observations; CHT requires the first
time at which a posterior crosses a fixed threshold to have a countably
additive distribution.  The characterization is his
Theorem~2 \citep[p.~406]{nielsen2021convergence}.  We show that it is false.

The necessity direction of Theorem~2 is correct.  The sufficiency direction is
false, and its published proof
\citep[pp.~406--407]{nielsen2021convergence} is invalid: it selects the
exceptional event after fixing both a probability budget and an accuracy
threshold, where almost-uniform convergence requires a single event, fixed
first, that serves every threshold at once.  Under countable additivity the
mismatch is harmless, since one may assign summable budgets to a sequence of
thresholds and take a countable union.  Under finite additivity there is no
bound on the probability of that union.

The mismatch is substantive.  We construct a probability function $P$ that:
\begin{enumerate}[itemsep=.4em,leftmargin=3em,labelsep=1em,topsep=1em]
\item[$\diamond$] gives every nonempty cylinder strictly positive probability;
\item[$\diamond$]  approximates every Borel event by a clopen event in $P$-symmetric
      difference;
\item[$\diamond$] satisfies CHT for every event and every threshold; and
\item[$\diamond$]  has posteriors that converge to the truth almost surely but not almost
      uniformly.
\end{enumerate}

The probability function is a mixture of two parts: a two-valued finitely
additive probability, which reads the truth value of an event along a sequence
of paths converging to a limit path, and a countably additive measure that
places an atom on each of those paths.

The counterexample defeats diagonalization.  Its posterior errors can be driven
below any fixed threshold outside a small exceptional event, but no single small
event works for every threshold at once; the countable union that would assemble
such an event is the diagonalization, and finite additivity does not license it.
The construction singles out a sequence of paths converging to a limit path; at
the $k$th path, the posterior error after
$k$ observations equals a number $\eta_k$.  For every fixed threshold $r>0$, the
inequality $\eta_k\le r$ holds on an ultrafilter-large set of indices, and that
is all CHT requires.  No single ultrafilter-large set of indices has
$\eta_k\to0$ along it, and that is what almost-uniform convergence would
require.

Finite additivity is not itself the obstacle.  By \cref{thm:positive}, replacing
the sequence $(\eta_k)$ by a sequence converging to zero in the
ordinary sense restores almost-uniform convergence, and the resulting
probability function is still merely finitely additive.  What fails is not
finite additivity but the right-to-left direction of \eqref{eq:characterization}.

Four of Nielsen's other claims are affected.  The first is
Theorem~3 \citep[p.~409]{nielsen2021convergence}, which asserts that there are
uncountably many merely finitely additive probabilities whose posteriors
converge almost uniformly.  Its proof fixes the values of a regular Borel
probability on every open set and then seeks nontrivial finitely additive Borel
extensions.  No such extensions exist: agreement on all open sets forces
agreement on every Borel set.  The statement of Theorem~3 is nonetheless true,
and we prove it.

The second is Theorem~4 \citep[p.~410]{nielsen2021convergence}, whose proof
invokes the same extension claim and fails with it.  We do not decide the
standalone truth of that theorem.

The third is Corollary~1 \citep[p.~408]{nielsen2021convergence}, a loss made
good only in part.  It draws almost-sure convergence to the truth from the
approximation property and CHT.  Its only
published derivation runs through the sufficiency direction of Theorem~2, so it
is left without a proof.  Our counterexample does not refute it --- the
probability function we construct does converge almost surely --- and for an
arbitrary probability function we record its claim as open.  For the family to
which our counterexample belongs we settle it affirmatively: there CHT alone
implies almost-sure convergence to the truth, so the almost-sure convergence of
our $P$ is forced rather than incidental.

The fourth is the second claim of Theorem~5
\citep[p.~411]{nielsen2021convergence}, a loss repaired.  That claim --- that
the probabilities the theorem constructs, which have the approximation property
but do not converge to the truth almost surely, therefore lack CHT --- is
inferred there from the sufficiency direction of Theorem~2, and so is left
without a proof as well.  We reprove it directly.  Because that inference is
also what supports Nielsen's claim that the approximation property does not
imply CHT, one direction of the
independence of his two criteria is restored with it; the other direction rests
on Theorem~4.

The presentation of technical material is self-contained.  We define every
finitely additive and ultrafilter notion the construction uses, and invoke three
standard results from countably additive probability: L\'evy's upward theorem,
Egorov's theorem, and regularity of Borel probabilities on compact metric
spaces.  The first two we quote from \citet{billingsley1995prob}; the third we
prove for Cantor space in \cref{lem:regularity}.

This paper is organized as follows.  \cref{sec:background} fixes notation,
defines the properties at issue, and states the results in question.
\cref{sec:necessity} proves the necessity
direction of \eqref{eq:characterization} and assembles the countably additive
facts used later.  The refutation then occupies three sections:
\cref{sec:gap} exhibits the published proof of sufficiency and locates its
defect, \cref{sec:construction} constructs a probability function that realizes
the defect --- beginning with the requirements that dictate its shape --- and
\cref{sec:main} verifies that both criteria hold while almost-uniform
convergence fails.  The remaining sections repair and extend.
\cref{sec:existence} shows that the extension argument behind Theorem~3 cannot
work and proves the statement it was meant to establish.  \cref{sec:as-ca}
shows that almost-sure convergence to the truth does not characterize countable
additivity, settling the first natural conjecture about a problem Nielsen
leaves open.  \cref{sec:diagonal} states the diagonal condition a repair of
Theorem~2 would require, and \cref{sec:conclusion} takes stock of what the
paper establishes and what it leaves open.

\section{Finitely Additive Posteriors on Cantor Space}\label{sec:background}

This section fixes the objects the rest of the paper works with: the sample
space and its finite-information algebras, the finitely additive probability
functions defined on them and the posteriors they generate, the two modes of
convergence to the truth that are being compared, and the two criteria
\citet{nielsen2021convergence} advances as characterizing the stronger mode.
It closes by stating, in this notation, the results of his that are at issue.

\subsection{Cantor Space and Finite Information}

Throughout, complements are taken in the ambient space: $A^c=\Omega\setminus A$
for $A\subseteq\Omega$, and $X^c=\N\setminus X$ for $X\subseteq\N$.  For a set
$A$, its \term{indicator function} $\ind_A$ is given by:
\[
  \ind_A(\omega)\quad=\quad
  \begin{cases}
    1&\mbox{if }\omega\in A;\\
    0&\mbox{if }\omega\notin A,
  \end{cases}
\]
and the \term{symmetric difference} of two events is:
\[
  A\symdiff B\quad=\quad(A\setminus B)\cup(B\setminus A).
\]
For a set $S$, we write $2^{S}$ for the collection of all subsets of $S$, and we
call a set $X\subseteq S$ \term{cofinite in $S$} if $S\setminus X$ is finite.

Let:
\[
  \Omega=\bigl\{0,1\bigr\}^{\N}, \qquad \N=\{1,2,3,\ldots\},
\]
equipped with the product topology, where $\{0,1\}$ itself is equipped with the
discrete topology.  An element $\omega\in\Omega$ is a \term{path}, written
coordinatewise as $\omega=(\omega_1,\omega_2,\ldots)$ with $\omega_n\in\{0,1\}$;
the coordinate $\omega_n$ is the $n$th observation along $\omega$.

A \term{word} is a finite string $s=s_1\cdots s_n$ with $s_i\in\{0,1\}$, and its
\term{length} is $\abs{s}=n$.  We allow $n=0$, in which case $s$ is the
\term{empty word}.  The \term{cylinder} determined by $s$ is:
\[
  [\,s\,]\quad=\quad
  \bigl\{\,\omega\in\Omega\,:\,\omega_1=s_1,\,\ldots,\,\omega_n=s_n\,\bigr\},
\]
so that $[\,s\,]=\Omega$ when $s$ is empty, and we call $[\,s\,]$ a
\term{length-$n$ cylinder} when $\abs{s}=n$.  Words and paths are combined by
\term{concatenation}, written as juxtaposition: for words $s$ and $t$, the word
$st$ consists of the coordinates of $s$ followed by those of $t$; and for a word
$s$ and a path $x$, the path $sx$ is given by:
\[
  sx\quad=\quad\bigl(\,s_1,\ldots,s_{\abs{s}},\,x_1,x_2,\ldots\,\bigr).
\]
For $m\ge0$ we write $0^m$ for the word consisting of $m$ zeros, and $0^\infty$
for the path all of whose coordinates are $0$.  If $\omega\in\Omega$, then
$\omega\restrict n=\omega_1\cdots\omega_n$ and $[\omega\restrict n]$ is the
unique length-$n$ cylinder containing $\omega$.

The cylinders form a countable basis for the product topology, and each of them
is \term{clopen}, that is, both open and closed.  Being a product of finite
discrete spaces, $\Omega$ is compact; it is
also completely metrizable.  Let $\Clop$ denote the algebra of all clopen subsets of $\Omega$, let
$\Borel=\sigma(\Clop)$ denote the Borel $\sigma$-algebra generated by $\Clop$, and let $\mathscr F_n$ denote
the finite $\sigma$-algebra generated by the length-$n$ cylinders.  Thus
$\mathscr F_n\subseteq\mathscr F_{n+1}$ for all $n\in \N$ and
$\sigma(\bigcup_n\mathscr F_n)=\Borel$.

The algebra $\mathscr F_n$ is the algebra of events settled by the first $n$
observations.  We record its elementary structure, since the construction below
repeatedly needs to know that certain events are clopen.

\begin{lemma}[Finite information]\label{lem:finite-info}
Let $n\in\N$.  A subset of $\Omega$ belongs to $\mathscr F_n$ if and only if it
is a union of length-$n$ cylinders.  Every such set is clopen, and
$\mathscr F_n$ is finite.  Consequently, if $g:\Omega\to\R$ is constant on each
length-$n$ cylinder, then $\{\omega:g(\omega)>c\}$ and
$\{\omega:g(\omega)\ge c\}$ are clopen for every $c\in\R$.\qedthm
\end{lemma}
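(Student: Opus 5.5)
The plan is to prove the characterization of $\mathscr F_n$ first, since the remaining claims follow from it. Let $\mathcal U_n$ be the collection of all unions of length-$n$ cylinders; the empty union is included, so $\emptyset\in\mathcal U_n$. The length-$n$ cylinders $[s]$, $\abs{s}=n$, are $2^n$ sets that partition $\Omega$: every path $\omega$ lies in exactly one of them, namely $[\omega\restrict n]$. I will check that $\mathcal U_n$ is a $\sigma$-algebra. It contains $\Omega$, the union of all $2^n$ cylinders. It is closed under complements, because the complement of a union of some cells of a partition is the union of the remaining cells. It is closed under arbitrary unions, trivially. Since $\mathcal U_n$ contains the generators of $\mathscr F_n$, we get $\mathscr F_n\subseteq\mathcal U_n$. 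Conversely, every member of $\mathcal U_n$ is a finite union of generators, hence lies in $\mathscr F_n$. So $\mathscr F_n=\mathcal U_n$, and $\mathscr F_n$ is finite, with exactly $2^{2^n}$ members, one for each set of words of length $n$.

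For clopenness, each cylinder $[s]$ is open, being a basic open set of the product topology. Its complement is the finite union of the other length-$n$ cylinders, so $[s]$ is also closed. A finite union of clopen sets is clopen, so every member of $\mathscr F_n$ is clopen.

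For the final claim, suppose $g$ is constant on each length-$n$ cylinder, and let $c\in\R$. Then $\{\omega:g(\omega)>c\}$ is the union of those $[s]$ with $\abs{s}=n$ on which the constant value of $g$ exceeds $c$. This holds because whether $g(\omega)>c$ depends only on $[\omega\restrict n]$. The same argument applies to $\{g\ge c\}$. Both sets are therefore in $\mathscr F_n$, and hence clopen by the previous step.

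No step is a genuine obstacle. The only point needing care is that the length-$n$ cylinders really partition $\Omega$, which holds because $\omega\restrict n$ is uniquely determined by $\omega$. Everything else is bookkeeping with finite partitions.
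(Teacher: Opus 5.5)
Your proposal is correct and follows the paper's proof essentially step for step. Both identify $\mathscr F_n$ with the unions of cells of the finite partition into $2^n$ length-$n$ cylinders, get clopenness because finite unions of clopen cylinders are clopen, and read off $\{g>c\}$ and $\{g\ge c\}$ as unions of cells. Your explicit check that each cylinder is closed, since its complement is the union of the other length-$n$ cylinders, is a detail the paper instead takes from its earlier remark that cylinders are clopen.
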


\begin{proof}
The $2^n$ length-$n$ cylinders are pairwise disjoint and cover $\Omega$.  The
unions of subfamilies of a finite partition form a $\sigma$-algebra, here with
$2^{2^n}$ members, and this $\sigma$-algebra contains the length-$n$ cylinders
and is contained in every $\sigma$-algebra that contains them; it is therefore
$\mathscr F_n$.  A finite union of clopen sets is clopen.  If $g$ is constant on
each length-$n$ cylinder, then $\{\omega:g(\omega)>c\}$ is the union of those
length-$n$ cylinders on which the constant value of $g$ exceeds $c$, and
similarly with $\ge$ in place of $>$.
\end{proof}

\subsection{Probability Functions and Their Posteriors}

A \term{probability function} on $\Borel$ is a map
$P:\Borel\to[0,1]$ with $P(\Omega)=1$ and
\[
  P(A\cup B)=P(A)+P(B)
\]
whenever $A$ and $B$ are disjoint.  Thus a probability function is normalized,
nonnegative, and finitely additive.

A \term{probability measure} is a probability function that is countably additive:
for every pairwise disjoint sequence $(A_j)_{j=1}^{\infty}$ of events in $\Borel$:
\[
  P\left(\bigcup_{j=1}^{\infty}A_j\right)
  \quad=\quad\sum_{j=1}^{\infty}P(A_j).
\]
A probability function that is not countably additive is \term{merely finitely
additive}.  We use this terminology to keep the difference between finite and
countable additivity visible; in the language of
\citet[Definition~2.1.1(7), p.~35]{rao1983charges}, a probability function is a
probability charge.

For $x\in\Omega$, \term{point mass} at $x$ is the probability measure $\delta_x$
given by:
\[
  \delta_x(A)\quad=\quad\ind_A(x)
  \qquad (A\in\Borel).
\]
A path $x$ is an \term{atom} of a probability measure $\mu$ on $\Borel$ if
$\mu(\{x\})>0$, and $\mu$ is \term{diffuse} if it has no atoms.  Every countable
subset of $\Omega$ is Borel, since singletons are closed, and countable
additivity therefore makes diffuseness equivalent to the condition that
$\mu(A)=0$ for every countable $A\subseteq\Omega$.  Finally, the
\term{fair-coin measure} $Q$ on $\Omega$ is the probability measure that gives
each cylinder the probability of the corresponding run of tosses of a fair coin:
\begin{equation}\label{eq:faircoin}
  Q\bigl([\,s\,]\bigr)\quad=\quad2^{-\abs{s}}
  \qquad\mbox{for every word }s.
\end{equation}
Its existence is the standard construction of a countable product of copies of
the uniform probability on $\{0,1\}$;
\citet[Section~2, pp.~24--27]{billingsley1995prob} obtains it without
product-measure machinery, by transferring Lebesgue measure from $[0,1)$ to
$\Omega$ along dyadic expansions, the existence of the transferred measure
resting on his Theorem~2.2.  Nothing below
uses more about $Q$ than \eqref{eq:faircoin} and countable additivity; in
particular, $Q$ is diffuse, because $Q(\{\omega\})\le Q([\omega\restrict n])=2^{-n}$
for every $n$.

\begin{remark}[Nielsen's terminology and notation]\label{rem:terminology}
The conventions of \citet{nielsen2021convergence} differ from ours and should be
translated when the two papers are read side by side.  There, ``probability
measure'' means what we call a probability function; ``countably additive
probability measure'' is the strengthened notion we call a probability measure;
and ``merely finitely additive'' agrees with our usage.  Nielsen writes
$\omega^n$ for the cylinder we write $[\omega\restrict n]$, $\mathcal C$ for our
clopen algebra $\Clop$, $\mathcal F$ for our Borel $\sigma$-algebra $\Borel$, and
$\mathcal C_n$ for our finite $\sigma$-algebra $\mathscr F_n$.  He calls
\eqref{eq:posi} below Assumption~1.\qedthm
\end{remark}

We impose Nielsen's positivity assumption:
\begin{equation}\label{eq:posi}
  P\Bigl(\bigl[\omega\restrict n\bigr]\Bigr)>0
  \qquad \mbox{for all }\omega\in\Omega\mbox{ and }n\in\N.
\end{equation}
This assumption ensures that elementary conditional probabilities are always
defined.  For $A\in\Borel$, the \term{time-$n$ posterior} of $A$ --- the
posterior probability of $A$ after the first $n$ observations --- is the
function:
\begin{equation}\label{eq:posterior}
  P_n(A)(\omega)
  \quad=\quad P\Bigl(A\Bigm|\bigl[\omega\restrict n\bigr]\Bigr)
  \quad=\quad\frac{P\Bigl(A\cap\bigl[\omega\restrict n\bigr]\Bigr)}{P\Bigl(\bigl[\omega\restrict n\bigr]\Bigr)}.
\end{equation}
The function $P_n(A)$ is constant on each length-$n$ cylinder, so by
\cref{lem:finite-info} it is $\mathscr F_n$-measurable and every event of the
form $\{P_n(A)>c\}$ or $\{P_n(A)\ge c\}$ is clopen.

\subsection{Modes of Convergence to the Truth}

Almost-uniform convergence strengthens almost-sure convergence.  In place of
pointwise convergence on an event of probability one, it demands uniform
convergence off an event of arbitrarily small probability --- and it demands
that the event be chosen before the accuracy the uniformity must achieve.

\begin{definition}[Almost-uniform convergence]\label{def:au-general}
Let $P$ be a probability function on $\Borel$, and let $f$ and
$f_1,f_2,\ldots$ be real-valued Borel functions on $\Omega$.  Then
$(f_n)_{n=1}^{\infty}$ \term{converges to $f$ almost uniformly} if for every
$\eps>0$ there is an event $E\in\Borel$ with $P(E)<\eps$ such that $f_n\to f$
uniformly on $E^c$: for every $r>0$ there is $N\in\N$ with:
\[
  n\ge N,\ \omega\notin E
  \quad\Longrightarrow\quad
  \bigl|f_n(\omega)-f(\omega)\bigr|<r.
\]
The event $E$ may depend on $\eps$ but not on $r$.\qeddef
\end{definition}

This is \citet[Definition~3, p.~402]{nielsen2021convergence}.  Applied to the
posterior processes of a probability function, it yields the two learning
properties at issue.

\begin{definition}[Almost-sure and almost-uniform learning]\label{def:convergence}
Let $P$ be a probability function on $\Borel$, and let $A\in\Borel$.
\begin{enumerate}[(a)]
\item The posteriors  are said to converge to the truth \term{almost surely} for $A$ if
\[
  P\Biggl(\Bigl\{\,\omega\in \Omega\,:\,P_n(A)(\omega)\,\to\,\ind_A(\omega)\,\Bigr\}\Biggr)\quad=\quad1.
\]
\item The posteriors are said to converge to the truth \term{almost uniformly} for $A$ if
$P_n(A)\to\ind_A$ almost uniformly in the sense of \cref{def:au-general}: for every
$\eps>0$, there is $E\in\Borel$ with $P(E)<\eps$ such that, for every $r>0$,
there is $N$ with:
\[
  n\ge N,\ \omega\notin E
  \quad\Longrightarrow\quad
  \Bigl|\,P_n(A)(\omega)\,-\,\ind_A(\omega)\,\Bigr|\;<\;r.
\]
\end{enumerate}
The probability function $P$ has either convergence property \term{to the truth}
(simpliciter) if it has that property for every $A\in\Borel$.\qeddef
\end{definition}

Almost-uniform convergence implies almost-sure convergence, and the implication
uses only finite additivity \citep[Lemma~3, p.~403]{nielsen2021convergence}: if
$E_m$ satisfies $P(E_m)<1/m$ with uniform convergence off $E_m$, then
$\bigcup_mE_m^c$ has probability one by monotonicity, and the convergence holds
at each of its points.  The converse implication holds under countable
additivity, by Egorov's theorem, but not in general.  Its failure is the subject
of this paper.  The order
of the quantifiers in part~(b) is what does the work below: the exceptional event $E$ may
depend on $A$ and $\eps$, but it must not depend on the accuracy level $r$.

\subsection{Approximation and Conditional Hitting Times}

\begin{definition}[Approximation]\label{def:approx}
A probability function $P$ on $\Borel$ has the \term{approximation property} if for every
$A\in\Borel$ and every $\eps>0$, there is a clopen $C\in\Clop$ such that:
\begin{equation*}
  P(A\symdiff C)<\eps.
  \tag*{\qeddef}
\end{equation*}
\end{definition}

A clopen event is decided by finitely many coordinates.  The approximation
property therefore says that every Borel event is close, in $P$-probability, to an
event that can be verified or refuted after finitely many observations.

For $A\in\Borel$ and $\delta\in(0,1)$, let:
\begin{equation}\label{eq:Cn}
  C_n(A,\delta)\quad=\quad\Bigl\{\,\omega\in \Omega\,:\,P_n(A)(\omega)\,>\,\delta\,\Bigr\}.
\end{equation}
This is the event on which the time-$n$ posterior exceeds $\delta$.  Define the
first-hitting time:
\[
  \tau_{A,\delta}(\omega)
  \quad=\quad\inf\Bigl\{n\in\N:\omega\in C_n(A,\delta)\Bigr\},
\]
with $\inf\varnothing=\infty$, and put:
\begin{align}
  D_1(A,\delta)\quad&=\quad C_1(A,\delta),\nonumber\\
  D_n(A,\delta)
    \quad&=\quad C_n(A,\delta)\setminus\bigcup_{j<n}C_j(A,\delta),
      \qquad n\ge2.\label{eq:Dn}
\end{align}
Thus $D_n(A,\delta)=\bigl\{\tau_{A,\delta}=n\bigr\}$.  By
\cref{lem:finite-info}, each $C_n(A,\delta)$ belongs to $\mathscr F_n$ and is
therefore clopen; hence each $D_n(A,\delta)$ is clopen as well, and the $D_n$ are
pairwise disjoint.

\begin{definition}[Conditional hitting times]\label{def:cht}
A probability function $P$ on $\Borel$ is said to be \term{countably additive on
conditional hitting times}, abbreviated \term{CHT}, if:
\begin{equation}\label{eq:cht}
 P\left(\bigcup_{n=1}^{\infty}D_n(A,\delta)\right)
   =\sum_{n=1}^{\infty}P(D_n(A,\delta))
\end{equation}
for every $A\in\Borel$ and $\delta\in(0,1)$.\qeddef
\end{definition}

Thus CHT requires the distribution of each first-hitting time $\tau_{A,\delta}$
to be countably additive on its finite values, even though $P$ need not be
countably additive on arbitrary disjoint sequences.  Let:
\[
  O_N(A,\delta)\;=\;\bigcup_{n=1}^N C_n(A,\delta)\;\mbox{ for each }N\in\N
  \qquad \mbox{and}  \qquad
  O(A,\delta)\;=\;\bigcup_{n=1}^{\infty}C_n(A,\delta).
\]
The partial unions satisfy
$O_N(A,\delta)=\bigcup_{n=1}^ND_n(A,\delta)$.  Hence, by finite additivity, CHT
is equivalent to
\begin{equation}\label{eq:cht-continuity}
  P\Bigl(O(A,\delta)\Bigr)\quad=\quad\lim_{N\to\infty}P\Bigl(O_N(A,\delta)\Bigr).
\end{equation}
Thus CHT is continuity from below but only for this special family of increasing
clopen events.

All standing notation is now fixed, and we collect it for reference.  The sample
space is $\Omega=\{0,1\}^{\N}$; $[s]$ is the cylinder determined by a finite word
$s$, and $[\omega\restrict n]$ is the length-$n$ cylinder containing $\omega$;
$\Clop$ is the algebra of clopen events, $\Borel=\sigma(\Clop)$ the Borel
$\sigma$-algebra, and $\mathscr F_n$ the finite $\sigma$-algebra of events
settled by the first $n$ observations.  For a probability function $P$ satisfying
\eqref{eq:posi} and an event $A\in\Borel$, the function $P_n(A)$ is the time-$n$
posterior \eqref{eq:posterior}; $C_n(A,\delta)$ is the event that this posterior
exceeds $\delta$; $\tau_{A,\delta}$ is the first time at which it does so;
$D_n(A,\delta)=\{\tau_{A,\delta}=n\}$; and $O_N(A,\delta)$ and $O(A,\delta)$ are
the partial and total unions of the events $C_n(A,\delta)$.

\subsection{The Results at Issue}\label{sec:nielsen-results}

We state the results of \citet{nielsen2021convergence} that this paper
concerns, in the terminology fixed above.  Each is stated there under
Assumption~1, which is \eqref{eq:posi}.  Throughout the paper, a result cited by
a bare number --- Theorem~2, Corollary~1, and so on --- is one of Nielsen's;
our own results carry a section number.

\begin{enumerate}[leftmargin=6.2em,labelsep=1em,itemsep=.6em,topsep=.8em]
\item[\textit{Theorem~2}] \citep[p.~406]{nielsen2021convergence}
      A probability function on $\Borel$ converges to the truth almost uniformly
      if and only if it has the approximation property and the CHT property.
\item[\textit{Corollary~1}] \citep[p.~408]{nielsen2021convergence}
      A probability function on $\Borel$ with the approximation property and the
      CHT property converges to the truth almost surely.
\item[\textit{Theorem~3}] \citep[p.~409]{nielsen2021convergence}
      There are uncountably many merely finitely additive probability functions
      on $\Borel$ that converge to the truth almost uniformly.
\item[\textit{Theorem~4}] \citep[p.~410]{nielsen2021convergence}
      There are uncountably many probability functions on $\Borel$ that have the
      CHT property but not the approximation property.
\item[\textit{Theorem~5}] \citep[p.~411]{nielsen2021convergence}
      There are uncountably many probability functions on $\Borel$ that have the
      approximation property but do not converge to the truth almost surely, and
      that therefore do not have the CHT property.
\end{enumerate}

\cref{sec:necessity} proves the necessity half of Theorem~2 and
\cref{sec:main} refutes its sufficiency half.  Corollary~1 is derived in
\citet{nielsen2021convergence} from Theorem~2 together with the implication
from almost-uniform to almost-sure convergence
\citep[Lemma~3, p.~403]{nielsen2021convergence}; with the sufficiency half gone,
that derivation lapses, and \cref{rem:corollary} records what remains of the
corollary.  Theorem~3 and Theorem~4 are proved by a single extension argument,
which \cref{sec:published-proof} shows to be unavailable;
\cref{thm:positive} reproves Theorem~3 by other means.  Theorem~5 does not rest
on that extension argument, but the second of its two claims is inferred from
the sufficiency half of Theorem~2 and so lapses with it;
\cref{rem:nielsen-thm5} explains this and \cref{cor:nielsen-thm5} reproves the
claim.  Immediately after the proof of Theorem~3,
\citet[p.~410]{nielsen2021convergence} infers that there are uncountably many
merely finitely additive probability functions
converging to the truth almost surely; \cref{thm:positive} secures that
conclusion as well.  Finally, \citet[p.~408]{nielsen2021convergence} leaves the
characterization of almost-sure convergence to the truth open, and returns to it
among the questions for future research \citep[p.~412]{nielsen2021convergence};
\cref{sec:as-ca} eliminates the first candidate, countable additivity itself.

\section{Necessity of Approximation and CHT}\label{sec:necessity}

Nielsen's Theorem~2 claims, under \eqref{eq:posi}, that almost-uniform
convergence to the truth is equivalent to the conjunction of approximation and
CHT.  The necessity direction is sound.  We prove it here, since the
counterexample of \cref{sec:main} leaves it standing and since its proof
isolates the tail estimate that the rest of the paper turns on.

\begin{lemma}[Tail control from almost-uniform convergence]\label{lem:au-tail}
Let $P$ be a probability function on $\Borel$, and let $(f_n)^{\infty}_{n=1}$ and $f$ be
real-valued Borel functions on $\Omega$.  If $f_n\to f$ almost uniformly, then
for every $r>0$:
\begin{equation*}
  P\left(\left\{\omega:\sup_{m\ge n}\bigl|f_m(\omega)-f(\omega)\bigr|>r\right\}\right)
  \longrightarrow0
  \qquad\mbox{as }n\to\infty.
  \tag*{\qedthm}
\end{equation*}
\end{lemma}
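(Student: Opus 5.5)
The plan is to unwind \cref{def:au-general} directly and use only monotonicity of $P$, which follows from finite additivity and nonnegativity. Fix $r>0$ and write
\[
  T_n\;=\;\Bigl\{\omega:\sup_{m\ge n}\bigl|f_m(\omega)-f(\omega)\bigr|>r\Bigr\}.
\]
I will first check that each $T_n$ is Borel, so that $P(T_n)$ is defined. The set $T_n$ is the countable union $\bigcup_{m\ge n}\{|f_m-f|>r\}$ of Borel sets, because a supremum exceeds $r$ exactly when some term does. Since $\Borel$ is a $\sigma$-algebra, $T_n\in\Borel$. The sets $T_n$ decrease in $n$, so $P(T_n)$ is nonincreasing. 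It therefore suffices to show that for every $\eps>0$ we have $P(T_n)<\eps$ for all sufficiently large $n$.

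Given $\eps>0$, almost-uniform convergence supplies an event $E\in\Borel$ with $P(E)<\eps$ on whose complement $f_n\to f$ uniformly. Because $E$ is chosen before the accuracy level, I may apply the uniformity with the accuracy equal to the fixed $r$. This gives $N$ such that $|f_m(\omega)-f(\omega)|<r$ for all $m\ge N$ and all $\omega\notin E$. Hence, for $n\ge N$ and $\omega\notin E$, we get $\sup_{m\ge n}|f_m(\omega)-f(\omega)|\le r$, so $\omega\notin T_n$. That is, $T_n\subseteq E$ for all $n\ge N$.

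Monotonicity then gives $P(T_n)\le P(E)<\eps$ for all $n\ge N$, which proves the claim. No step is a real obstacle. The only points needing care are the measurability of $T_n$ and the use of the strict inequality ``$>r$'' in $T_n$. The strict inequality lets the bound $|f_m-f|<r$ off $E$ pass to a supremum bound of $\le r$ without trouble. I also note that the argument never needs a countable union of exceptional events, so finite additivity is enough here, as the statement requires.
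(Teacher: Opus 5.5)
Your proof is correct and is essentially the paper's argument. You choose $E$ with $P(E)<\eps$ before fixing the accuracy, apply uniform convergence at level $r$ to get $T_n\subseteq E$ for all large $n$, and conclude by monotonicity; your check that $T_n=\bigcup_{m\ge n}\{|f_m-f|>r\}$ is Borel is a detail the paper leaves implicit.
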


\begin{proof}
Fix $r>0$ and an arbitrary $\gamma>0$.  Choose $E$ with $P(E)<\gamma$ such that
$f_n\to f$ uniformly on $E^c$.  For all sufficiently large $n$, the displayed
tail event is contained in $E$, and hence has probability less than $\gamma$.
Since $\gamma$ is arbitrary, the probabilities tend to zero.
\end{proof}

The lemma converts almost-uniform convergence into a tail estimate: for each
accuracy $r$, the probability that the error will ever again exceed $r$ after
time $n$ tends to zero as $n$ grows.  It is
\citet[Lemma~2, p.~402]{nielsen2021convergence}, stated with the accuracy
threshold $r$ separated from the probability budget rather than tied to it; the
proof is his.

\begin{proposition}[The valid half of Nielsen's characterization]
\label{prop:necessity}
Let $P$ be a probability function on $\Borel$.  If $P$ satisfies
\eqref{eq:posi} and converges to the truth almost uniformly, then $P$ has the
approximation property and the CHT property.
\qedthm \end{proposition}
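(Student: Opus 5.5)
I will prove the two properties separately, in each case fixing $A\in\Borel$ and working only with the tail estimate of \cref{lem:au-tail} applied to $f_n=P_n(A)$ and $f=\ind_A$. Write
\[
  T_n(r)=\Bigl\{\omega:\sup_{m\ge n}\bigl|P_n(A)(\omega)-\ind_A(\omega)\bigr|>r\Bigr\}
\]
loosely (with $m$ in place of the inner $n$). The lemma gives $P(T_n(r))\to0$ as $n\to\infty$ for every fixed $r>0$. Only a single threshold is needed for each property, so the quantifier issue that breaks sufficiency does not arise.

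\emph{Approximation.} Fix $\eps>0$ and set $r=1/2$. Choose $n$ with $P(T_n(1/2))<\eps$, and put $C=\{P_n(A)>1/2\}$, which is clopen by \cref{lem:finite-info}. If $\omega\in A\symdiff C$, then either $\omega\in A$ with $P_n(A)(\omega)\le1/2$, or $\omega\notin A$ with $P_n(A)(\omega)>1/2$. In both cases $\abs{P_n(A)(\omega)-\ind_A(\omega)}\ge1/2$. To avoid the boundary case I will use $r=1/4$ instead, so that $A\symdiff C\subseteq T_n(1/4)$. Monotonicity then gives $P(A\symdiff C)<\eps$.

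\emph{CHT.} Fix $\delta\in(0,1)$. By \eqref{eq:cht-continuity} it suffices to show $P(O\setminus O_N)\to0$, where $O=O(A,\delta)$ and $O_N=O_N(A,\delta)$. Let $r=\min(\delta,1-\delta)/2$. The plan is to split $O\setminus O_N$ according to membership in $A$.

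On $A^c$, a point of $O\setminus O_N$ has $P_m(A)(\omega)>\delta>r$ for some $m>N$. It therefore lies in $T_{N+1}(r)$.

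On $A$ the argument differs, and this is the main obstacle: a point of $A$ in $O\setminus O_N$ has posterior at most $\delta$ at every time up to $N$, but nothing forces a late error. Here I will use the tail event at the fixed time $N$ itself. If $\omega\in A\setminus O_N$, then $P_N(A)(\omega)\le\delta$, so $\abs{P_N(A)(\omega)-1}\ge1-\delta>r$, and hence $\omega\in T_N(r)$.

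Combining the two cases, $O\setminus O_N\subseteq T_N(r)$. Consequently
\[
  0\le P(O)-P(O_N)\le P(T_N(r))\to0,
\]
where the difference is computed by finite additivity, since $O_N\subseteq O$. This yields \eqref{eq:cht-continuity}, and hence \eqref{eq:cht}.

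Positivity \eqref{eq:posi} enters only to make the posteriors defined. All events involved are Borel: they are countable unions of clopen sets built from the $\mathscr F_n$-measurable functions $P_n(A)$ and from $A$ itself, so $P$ may legitimately be applied to them.
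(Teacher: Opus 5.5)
Your proof is correct and follows essentially the same route as the paper. For approximation, both use the tail estimate of \cref{lem:au-tail} and the clopen set on which the time-$n$ posterior is large. For CHT, both split the late hitting events $O\setminus O_N=\bigcup_{n>N}D_n$ along $A$ and $A^c$, bounding the $A$-part by the error at time $N$ and the $A^c$-part by the later crossing.

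The differences are minor. You use one threshold $r=\min(\delta,1-\delta)/2$ and the continuity form \eqref{eq:cht-continuity}, where the paper proves the countable-additivity identities for $A\cap D$ and $A^c\cap D$ separately and adds them. In the approximation step, choose $n$ with $P(T_n(1/4))<\eps$ from the outset rather than first using $1/2$.
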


\begin{proof}
Fix $A\in\Borel$.

\emph{Approximation.}
Since $|P_n(A)-\ind_A|\le\sup_{m\ge n}|P_m(A)-\ind_A|$ pointwise,
\cref{lem:au-tail} gives:
\[
 P\bigl(|P_n(A)-\ind_A|>r\bigr)\longrightarrow0
 \qquad\mbox{ for each } r>0.
\]
Define the event:
\[
  B_n\;=\;\Bigl\{\,\omega\in \Omega:P_n(A)(\omega)\,\ge\,\tfrac{1}{2}\,\Bigr\},
\]
which is clopen by \cref{lem:finite-info}.
If $\omega\in A\symdiff B_n$, then
$|P_n(A)(\omega)-\ind_A(\omega)|\ge\tfrac{1}{2}$.  Therefore:
\[
  P(A\symdiff B_n)
  \;\le\; P\Bigl(\,\Bigl|P_n(A)-\ind_A\Bigr|>\tfrac{1}{3}\,\Bigr)\longrightarrow0,
\]
which proves the approximation property.

\emph{Conditional hitting times.}
Fix $\delta\in(0,1)$, abbreviate $D_n=D_n(A,\delta)$, and set
$D=\bigcup_nD_n$.  For every $N\in\N$:
\begin{align*}
 P(A\cap D)
 \quad&=\quad\sum_{n=1}^N P(A\cap D_n)
   \,+\,P\left(A\cap\bigcup_{n>N}D_n\right).
\end{align*}
If $\omega$ belongs to the last event, no hit has occurred by time $N$, so
$P_N(A)(\omega)\le\delta$ and
$|P_N(A)(\omega)-1|\ge1-\delta$.  Lemma~\ref{lem:au-tail}, used with the smaller
strict threshold $(1-\delta)/2$, implies that the last term tends to zero.  Hence:
\[
 P(A\cap D)=\sum_{n=1}^{\infty}P(A\cap D_n).
\]
Similarly, if
$\omega\in A^c\cap\bigcup_{n>N}D_n$, then $P_m(A)(\omega)>\delta$ for some
$m>N$, so
$\sup_{m>N}|P_m(A)(\omega)-\ind_A(\omega)|>\delta$.  Another application of
Lemma~\ref{lem:au-tail} gives:
\[
 P(A^c\cap D)=\sum_{n=1}^{\infty}P(A^c\cap D_n).
\]
Adding the two identities establishes \eqref{eq:cht}, as desired.
\end{proof}

We close the section by recording three facts about countably additive Borel
probabilities on $\Omega$ that later sections use.  The first two are standard
and we quote them; the third we prove, so that the paper depends on no measure
theory beyond a first course.

\begin{lemma}[L\'evy and Egorov]\label{lem:standard-facts}
Let $\mu$ be a countably additive Borel probability on $\Omega$ that assigns
positive probability to every nonempty cylinder.
\begin{enumerate}[(a),itemsep=.8em,leftmargin=3em,labelsep=1em,topsep=0.6em]
\item For every $B\in\Borel$, the functions
\[
  \omega\longmapsto \mu(B\given[\omega\restrict n])
\]
converge to $\ind_B$ $\mu$-almost surely.
\item If Borel functions $f_n$ converge to $f$ $\mu$-almost surely, then for every
$\rho>0$ there is a Borel event $E$ with $\mu(E)<\rho$ such that $f_n\to f$
uniformly on $E^c$.\qedthm
\end{enumerate}
\end{lemma}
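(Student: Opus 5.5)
The two parts are standard results that the paper says it will quote from \citet{billingsley1995prob} rather than prove. I will therefore reduce each to its textbook form and check that the hypotheses match.

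For part~(a), I will identify the functions $\omega\mapsto\mu(B\given[\omega\restrict n])$ with the conditional expectations $\mathbb E_\mu[\ind_B\given\mathscr F_n]$. Since $\mathscr F_n$ is generated by the finite partition into length-$n$ cylinders, each of positive $\mu$-probability by hypothesis, the conditional expectation on each cell is the elementary ratio in \eqref{eq:posterior}. This identification holds everywhere, not merely almost surely, because no cell is null. The sequence $(\mathscr F_n)$ is increasing with $\sigma(\bigcup_n\mathscr F_n)=\Borel$, and $\ind_B$ is bounded and $\Borel$-measurable. L\'evy's upward theorem (martingale convergence for $\mathbb E[X\given\mathscr F_n]$ with $X$ integrable) then gives convergence $\mu$-almost surely to $\mathbb E_\mu[\ind_B\given\Borel]=\ind_B$. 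The one point to state explicitly is that the limit $\sigma$-algebra is all of $\Borel$; \cref{sec:background} records this.

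For part~(b), this is Egorov's theorem for the finite measure $\mu$. The case where $f_n\to f$ only off a null set $Z$ reduces to the everywhere case. For each $k$, let
\[
A_{k,N}=\bigcup_{n\ge N}\{|f_n-f|\ge 1/k\}\setminus Z .
\]
These sets decrease in $N$ to the empty set, so countable additivity (continuity from above) yields $N_k$ with $\mu(A_{k,N_k})<\rho2^{-k-1}$. Then I set $E=Z\cup\bigcup_k A_{k,N_k}$, which is Borel and satisfies $\mu(E)<\rho$ by countable subadditivity. Off $E$ we have $|f_n-f|<1/k$ for all $n\ge N_k$, which is uniform convergence on $E^c$.

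The only real subtlety is this diagonal union over $k$. It uses countable additivity essentially, and it is precisely the step that fails for merely finitely additive $P$. I would flag it as the point the paper later exploits, rather than treat it as an obstacle to the present lemma.
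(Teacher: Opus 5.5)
Your proposal is correct and matches the paper's proof: part~(a) uses the same identification of $\mu(B\given[\omega\restrict n])$ with $\mathbb{E}_\mu[\ind_B\given\mathscr F_n]$, followed by L\'evy's upward theorem with $\sigma(\bigcup_n\mathscr F_n)=\Borel$. For part~(b) the paper only cites Egorov's theorem, while you write out its standard proof by continuity from above and a summable union over $k$; that proof is valid once $Z$ is taken to be the Borel complement of the convergence set, and, as you note, it is exactly the countable-union step of \cref{sec:gap} that finite additivity does not support.
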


\begin{proof}
For part~(a), fix $B\in\Borel$.  The $\sigma$-algebra $\mathscr F_n$ is generated
by the finite partition of $\Omega$ into length-$n$ cylinders
(\cref{lem:finite-info}), and $\mu$ gives each cell of that partition positive
probability, so the conditional expectation of $\ind_B$ given $\mathscr F_n$ is
the elementary ratio:
\[
  \mathbb{E}_{\mu}\bigl[\ind_B\bigm|\mathscr F_n\bigr](\omega)
  \quad=\quad
  \frac{\mu\bigl(B\cap[\omega\restrict n]\bigr)}
       {\mu\bigl([\omega\restrict n]\bigr)}
  \quad=\quad
  \mu\bigl(B\given[\omega\restrict n]\bigr).
\]
The $\sigma$-algebras $\mathscr F_n$ increase with union generating $\Borel$, so
L\'evy's upward theorem \citep[Theorem~35.6, p.~470]{billingsley1995prob} yields
\[
  \mathbb{E}_{\mu}\bigl[\ind_B\bigm|\mathscr F_n\bigr]
  \quad\longrightarrow\quad
  \mathbb{E}_{\mu}\bigl[\ind_B\bigm|\Borel\bigr]
  \quad=\quad\ind_B
\]
$\mu$-almost surely.  Part~(b) is Egorov's theorem
\citep[Problem~13.9]{billingsley1995prob}.
\end{proof}

\begin{lemma}[Regularity on Cantor space]\label{lem:regularity}
Let $\mu$ be a countably additive Borel probability on $\Omega$.  For every
$A\in\Borel$ and every $\rho>0$, there are a closed set $F$ and an open set $G$
such that $F\subseteq A\subseteq G$ and $\mu(G\setminus F)<\rho$.
\qedthm \end{lemma}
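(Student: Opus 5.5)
The plan is the standard ``good sets'' argument. Let $\mathcal G$ be the collection of all $A\in\Borel$ such that for every $\rho>0$ there are a closed $F$ and an open $G$ with $F\subseteq A\subseteq G$ and $\mu(G\setminus F)<\rho$. I will show that $\mathcal G$ contains $\Clop$ and is a $\sigma$-algebra. Since $\Borel=\sigma(\Clop)$, this gives $\mathcal G=\Borel$, which is the statement of \cref{lem:regularity}.

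\emph{Clopen sets and complements.} Every clopen $C$ lies in $\mathcal G$: take $F=G=C$. For complements, suppose $F\subseteq A\subseteq G$ with $\mu(G\setminus F)<\rho$. Then $G^c\subseteq A^c\subseteq F^c$, where $G^c$ is closed and $F^c$ is open, and
\[
  F^c\setminus G^c = G\setminus F .
\]
So the same witnesses, complemented, work for $A^c$, and $\mathcal G$ is closed under complements.

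\emph{Countable unions.} Let $A_1,A_2,\ldots\in\mathcal G$, put $A=\bigcup_j A_j$, and fix $\rho>0$. For each $j$ choose closed $F_j$ and open $G_j$ with $F_j\subseteq A_j\subseteq G_j$ and $\mu(G_j\setminus F_j)<\rho\,2^{-j-1}$. The open set is easy: take $G=\bigcup_j G_j$. The union $\bigcup_j F_j$ need not be closed, and this is the step I expect to need care. The fix uses countable additivity of $\mu$. Since $\bigcup_{j\le k}F_j$ increases to $\bigcup_j F_j$, continuity from below gives a $k$ with
\[
  \mu\Bigl(\bigcup_j F_j\setminus\bigcup_{j\le k}F_j\Bigr)<\rho/2 .
\]
Set $F=\bigcup_{j\le k}F_j$, a finite union of closed sets and therefore closed, with $F\subseteq A\subseteq G$. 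Then
\[
  G\setminus F\;\subseteq\;\bigcup_j\,(G_j\setminus F_j)\;\cup\;\Bigl(\bigcup_j F_j\setminus F\Bigr),
\]
and countable subadditivity bounds $\mu(G\setminus F)$ by $\sum_j\rho\,2^{-j-1}+\rho/2\le\rho$. Thus $A\in\mathcal G$.

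Since $\mathcal G$ contains $\Omega$ (which is clopen), $\mathcal G$ is a $\sigma$-algebra containing $\Clop$, so $\mathcal G\supseteq\Borel$, completing the proof. The only genuine obstacle is that countable unions of closed sets are not closed, which the truncation above handles. That truncation is exactly where countable additivity is used, consistent with the later remark that regularity can fail for merely finitely additive $P$. Compactness of $\Omega$ is not needed for this closed/open version.
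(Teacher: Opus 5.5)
Your proof is correct and is the paper's good-sets argument, with the complement and countable-union steps essentially verbatim, including the $\rho2^{-j-1}$ budget and the truncation by continuity from below; that truncation in fact gives the required strict bound $\mu(G\setminus F)<\rho$, not just $\le\rho$, because the truncation term is strictly below $\rho/2$. The only difference is the seed class: you start from clopen sets, which are trivially good, and invoke $\Borel=\sigma(\Clop)$, whereas the paper starts from closed sets and must first approximate each one from outside by a decreasing sequence of clopen sets via continuity from above. Your base case is simpler, while the paper's choice of closed sets is the one that carries over to general metric spaces, where no clopen basis generates the Borel sets.
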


\begin{proof}
Let $\mathscr K$ denote the family of Borel sets with the stated property for
every $\rho>0$.  We show that $\mathscr K$ is a $\sigma$-algebra containing
every closed set; since the closed sets generate $\Borel$, this gives
$\mathscr K=\Borel$.

\emph{Closed sets.}  Let $F_0$ be closed.  If $F_0=\Omega$, then $F=G=\Omega$
serves, so assume $F_0^c\ne\varnothing$.  Its complement is open and therefore a
union of countably many cylinders; let $C_1\subseteq C_2\subseteq\cdots$ be the
partial unions of an enumeration of those cylinders, so that each $C_m$ is clopen
and $\bigcup_mC_m=F_0^c$.  Then each $G_m=C_m^c$ is clopen,
$F_0\subseteq G_m$, and $\bigcap_mG_m=F_0$.  Continuity from above gives
$\mu(G_m\setminus F_0)\to0$, so $F=F_0$ and $G=G_m$ with $m$ large enough serve.

\emph{Complements.}  If $F\subseteq A\subseteq G$ with $\mu(G\setminus F)<\rho$,
then $G^c\subseteq A^c\subseteq F^c$, where $G^c$ is closed and $F^c$ is open,
and $F^c\setminus G^c=G\setminus F$.

\emph{Countable unions.}  Let $A=\bigcup_{j\ge1}A_j$ with every
$A_j\in\mathscr K$, and let $\rho>0$.  Choose closed $F_j$ and open $G_j$ with
$F_j\subseteq A_j\subseteq G_j$ and $\mu(G_j\setminus F_j)<\rho2^{-j-1}$.  The
set $G=\bigcup_jG_j$ is open and contains $A$.  The sets
$H_J=\bigcup_{j\le J}F_j$ are closed and increase to $H=\bigcup_jF_j$, so
continuity from below supplies $J$ with $\mu(H\setminus H_J)<\rho/2$; put
$F=H_J$, a closed subset of $A$.  If $\omega\in G\setminus F$, then
$\omega\in G_j$ for some $j$, and either $\omega\notin F_j$ or
$\omega\in H\setminus H_J$.  Hence:
\[
  G\setminus F
  \quad\subseteq\quad
  \bigcup_{j\ge1}\bigl(G_j\setminus F_j\bigr)\;\cup\;\bigl(H\setminus H_J\bigr),
\]
and therefore $\mu(G\setminus F)<\rho/2+\rho/2=\rho$.
\end{proof}

\begin{lemma}[Clopen approximation]\label{lem:clopen-regular}
Let $\mu$ be a countably additive Borel probability on $\Omega$.  For every
$A\in\Borel$ and every $\rho>0$, there is a clopen $C\in\Clop$ such that
$\mu(A\symdiff C)<\rho$.
\qedthm \end{lemma}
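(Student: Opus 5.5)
The plan is to combine \cref{lem:regularity} with compactness of $\Omega$. Fix $A\in\Borel$ and $\rho>0$. By \cref{lem:regularity}, choose a closed $F$ and an open $G$ with $F\subseteq A\subseteq G$ and $\mu(G\setminus F)<\rho$. The goal is a clopen $C$ squeezed between them, $F\subseteq C\subseteq G$. Once we have it, $A\symdiff C\subseteq G\setminus F$, since a point of $A\setminus C$ lies in $G\setminus F$ and a point of $C\setminus A$ lies in $G\setminus F$. Monotonicity then gives $\mu(A\symdiff C)\le\mu(G\setminus F)<\rho$.

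To produce $C$, write the open set $G$ as a union of cylinders, since the cylinders form a basis. Every point of $F$ lies in some cylinder contained in $G$, so these cylinders cover $F$. Because $F$ is closed in the compact space $\Omega$, it is compact, and finitely many of these cylinders, say $[s_1],\ldots,[s_k]$, already cover $F$. Put $C=[s_1]\cup\cdots\cup[s_k]$. It is a finite union of clopen sets and hence clopen, it contains $F$, and it lies inside $G$. If $F=\varnothing$, take $C=\varnothing$, which is clopen and works trivially.

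There is essentially no obstacle here, since the substantive work (regularity) is already done in \cref{lem:regularity}. The only step needing care is the compactness step: one must note that $F$ is compact as a closed subset of the compact space $\Omega$, and that $G$ can be covered by basic clopen cylinders. Both facts are recorded in \cref{sec:background}.

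An alternative route avoids topology entirely: show that the class of Borel sets approximable by clopen sets in $\mu$-symmetric difference is a $\sigma$-algebra containing $\Clop$. The countable-union step would use continuity from below to truncate to a finite union. I would keep the compactness argument, since it reuses \cref{lem:regularity} directly.
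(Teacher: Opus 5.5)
Your proof is correct and matches the paper's own argument essentially verbatim: \cref{lem:regularity} supplies $F\subseteq A\subseteq G$, compactness of the closed set $F$ gives finitely many cylinders inside $G$ that cover it, and their union $C$ is clopen with $A\symdiff C\subseteq G\setminus F$. Your separate treatment of $F=\varnothing$ is harmless but not needed, since the empty union already gives $C=\varnothing$.
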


\begin{proof}
By \cref{lem:regularity}, choose a closed $F$ and an open $G$ with
$F\subseteq A\subseteq G$ and $\mu(G\setminus F)<\rho$.  Every point of $F$ lies
in a cylinder contained in $G$, because the cylinders form a neighborhood basis.
The set $F$ is a closed subset of the compact space $\Omega$ and hence compact,
so finitely many of those cylinders cover $F$; their union $C$ is clopen and
satisfies $F\subseteq C\subseteq G$.  Therefore
$A\symdiff C\subseteq G\setminus F$, and so $\mu(A\symdiff C)<\rho$.
\end{proof}

\section{Quantifier Order in the Published Sufficiency Proof}\label{sec:gap}

The published proof of the sufficiency direction of Theorem~2
\citep[pp.~406--407]{nielsen2021convergence} runs as follows.  Let $P$ have the
approximation property and the CHT property, and let $A\in\Borel$ together with
$\eps,\delta\in(0,1)$ be given.  Approximation supplies a clopen event $A_1$,
settled by the first $n_1$ observations, with:
\[
  P(A\symdiff A_1)\;<\;\frac{\eps\delta}{2}.
\]
Write $S_1=A\symdiff A_1$ and $D=\bigcup_{n=1}^{\infty}D_n(S_1,\delta)$.  Each
$D_n(S_1,\delta)$ is a union of length-$n$ cylinders (\cref{lem:finite-info}),
and on each of them the time-$n$ posterior of $S_1$ exceeds $\delta$; summing
over those cylinders gives:
\[
  P\bigl(S_1\cap D_n(S_1,\delta)\bigr)\;\ge\;\delta\,P\bigl(D_n(S_1,\delta)\bigr).
\]
Summing over $n$ and applying CHT to $S_1$ at level $\delta$ yields:
\[
  \frac{\eps\delta}{2}
  \;>\;P(S_1)
  \;\ge\;P(S_1\cap D)
  \;\ge\;\sum_{n=1}^{\infty}P\bigl(S_1\cap D_n(S_1,\delta)\bigr)
  \;\ge\;\delta\,P(D),
\]
so that $P(D)<\eps/2$.  The exceptional event is then $E=S_1\cup D$, which
satisfies $P(E)<\eps$; and for $n\ge n_1$ and $\omega\notin E$ one obtains:
\begin{equation}\label{eq:published-output}
  \bigl|P_n(A)(\omega)-\ind_A(\omega)\bigr|\;\le\;\delta.
\end{equation}

Every step of this argument is correct.  What it delivers, however, is:
\begin{equation}\label{eq:weak-quantifiers}
\begin{split}
  &\forall\eps>0\ \ \forall\delta>0\ \ \exists E\ \ \exists N
   \quad\text{such that }P(E)<\eps\\[2pt]
  &\qquad\text{and \eqref{eq:published-output} holds for all }
   n\ge N\text{ and all }\omega\notin E,
\end{split}
\end{equation}
whereas almost-uniform convergence, by \cref{def:au-general}, requires:
\begin{equation}\label{eq:au-quantifiers}
\begin{split}
  &\forall\eps>0\ \ \exists E\ \ \forall\delta>0\ \ \exists N
   \quad\text{such that }P(E)<\eps\\[2pt]
  &\qquad\text{and \eqref{eq:published-output} holds for all }
   n\ge N\text{ and all }\omega\notin E.
\end{split}
\end{equation}
The event $E=S_1\cup D$ produced by the argument depends on $\delta$ twice over.
The approximation tolerance $\eps\delta/2$ that produces $S_1$ shrinks with
$\delta$, and $D$ is assembled from the hitting events of $S_1$ at level
$\delta$.  Nothing in the argument produces one event that serves every
$\delta$.

Under countable additivity the two prefixes are interchangeable.  Given a
sequence $\delta_m\downarrow0$, apply \eqref{eq:weak-quantifiers} with
probability budget $\eps2^{-m}$ at level $\delta_m$ to obtain events $E_m$, and
put $E=\bigcup_mE_m$.  Countable subadditivity gives $P(E)<\eps$, and $E$
witnesses \eqref{eq:au-quantifiers}, since a bound at level $\delta_m$ is a
bound at every $\delta\ge\delta_m$.  Under finite additivity the union step is
unavailable: monotonicity bounds $P(\bigcup_mE_m)$ from below by each $P(E_m)$
and from above by nothing smaller than $1$.

Whether \eqref{eq:weak-quantifiers} nevertheless implies
\eqref{eq:au-quantifiers} for probability functions with the two criteria is
exactly the content of the sufficiency direction of Theorem~2.
\cref{sec:construction,sec:main} answer the question in the negative, by
constructing a probability function for which the gap between the two prefixes
is realized.

\section{Construction of the Probability Function}\label{sec:construction}

This section builds the probability function that refutes the sufficiency
direction of \eqref{eq:characterization}.

\subsection{What the Construction Must Do}\label{sec:requirements}

The target is a probability function $P$ on $\Borel$ that satisfies
\eqref{eq:posi}, has the approximation property, has the CHT property, and whose
posteriors converge to the truth almost surely but not almost uniformly.  The
last of these requirements pulls against the two before it, and every feature of
the construction is an answer to that tension.

The probability function can be written down before any of its ingredients
are explained:
\begin{align*}
  P\quad&=\quad\alpha\,u\;+\;(1-\alpha)\,R_0,\\[4pt]
  R_0\quad&=\quad\sum_{k=1}^{\infty}t_k
    \Bigl[\,(1-\eta_k)\,\delta_{d_k}\;+\;\eta_k\,\nu_k\,\Bigr].
\end{align*}
The second line is countably additive; everything that fails countable
additivity is in $u$.  The ingredients are built in the order in which they are
needed.  The \textit{distinguished paths} $d_1,d_2,\ldots$, their limit path
$z$, the clopen \textit{shells} $S_1,S_2,\ldots$ that hold them one apiece, and
the diffuse shell measures $\nu_k$ come from \cref{sec:geometry}; the free
ultrafilter and the \textit{diffuse fractions} $\eta_k\in(0,1)$ chosen against
it come from \cref{sec:ultrafilter}; the shell weights $t_k>0$ with
$\sum_kt_k=1$, the point masses $\delta_{d_k}$, and the sum $R_0$ come from
\cref{sec:ca-component}; the two-valued probability $u$, which reads an event's
truth value along $d_1,d_2,\ldots$, comes from \cref{sec:u-component}; and the
mixing weight $\alpha\in(0,1)$ is fixed in \cref{sec:P-companion}, along with
the countably additive measure that clopen events cannot tell from $P$.
\cref{sec:companion-posteriors} turns that measure into a formula for the
posteriors of $P$, and every argument in \cref{sec:main} runs through it.

Two demands bear on the numbers $\eta_k$, and the counterexample lives between
them.  CHT needs, at each fixed threshold, that the indices $k$ whose $\eta_k$
falls below that threshold form a large set.  Almost-uniform convergence needs
one large set of indices along which $\eta_k$ tends to zero.  The construction
must meet the first demand and defeat the second.  No ordinary notion of
largeness will do that, for on the cofinite sets the two demands coincide
(\cref{rem:frechet}).  Nor will an arbitrary free ultrafilter, for passing from
largeness at each threshold to a single large set carrying convergence is a
property some free ultrafilters have and others lack (\cref{rem:ppoint}), and no
one sequence $(\eta_k)$ defeats them all (\cref{rem:frechet}).  The ultrafilter
and the sequence have therefore to be chosen together.

The construction has two stages: a combinatorial device, and a probability
function built on it. Mixing a two-valued probability read along a convergent sequence with a
countably additive measure is the standard way to obtain a merely finitely
additive probability that clopen events cannot detect, and
\citet[proof of Theorem~5, p.~411]{nielsen2021convergence} mixes such a
probability with a single fair-coin measure to separate the approximation
property from almost-sure convergence.  Ours places an atom at each
distinguished path together with a diffuse remainder of weight $\eta_k$ in the
surrounding shell, and the weights $\eta_k$ are chosen against an ultrafilter
that separates threshold-by-threshold smallness from convergence.  Those two
changes convert that separation into a separation of approximation and CHT from
almost-uniform convergence; \cref{rem:nielsen-thm5} sets the two side by
side.

\subsection{Paths, Shells, and Shell Measures}\label{sec:geometry}

The geometry has to supply the paths at which convergence will fail.
Almost-uniform convergence, in the form \eqref{eq:au-quantifiers}, requires one
exceptional event of small probability to serve every accuracy at once.  To
defeat it we need a fixed Borel event together with a supply of paths at which
the posterior errors of that event refuse to become uniformly small, rich enough
that no event of small probability exhausts them.  Those paths need company: a
nonzero posterior error at a path requires the cylinder of its first $k$
observations to carry mass elsewhere than at the path itself, since were the
path the only point of that cylinder charged, every event would already be
settled there and the error would be zero.  So each of the paths is given a
clopen neighborhood of its own, and the remaining mass of that neighborhood is
spread diffusely.

Here is that geometry.  The point
\[
  z\;=\;\bigl(\,0,0,0,\ldots\;\bigr)
\]
is the all-zero path $0^\infty$; it will be the \term{limit path} of the
construction.  For each $k\ge1$, define:
\begin{equation}\label{eq:shellsandpaths}
  S_k\;=\;\bigl[\,0^{k-1}1\,\bigr]
  \qquad \mbox{and} \qquad
  d_k\;=\;0^{k-1}10^\infty.
\end{equation}
Thus $S_k$ is the clopen \term{shell} consisting of paths whose first $1$ occurs
at time $k$, while $d_k$ is the \term{distinguished path} in that shell that is
zero thereafter.
The shells are pairwise disjoint and, together with $\{z\}$, partition $\Omega$:
\begin{equation}\label{eq:shell-partition}
  \Omega\quad=\quad\{z\}\;\mathbin{\dot\cup}\;\bigcup_{k=1}^{\infty}S_k.
\end{equation}
Moreover, $d_k\to z$.  For $n\ge0$, let $Z_{n}$ denote the clopen cylinder of paths that are zero through time $n$:
\begin{equation}\label{eq:clopenzeropaths}
Z_n\quad=\quad\bigl[\,0^n\,\bigr].
\end{equation}
Thus $z\in Z_n$, and $d_k\in Z_n$ if and only if $k>n$.  We write:
\begin{equation}\label{eq:Dset}
  D\quad=\quad\bigl\{\,d_k:k\in\N\,\bigr\}
\end{equation}
for the set of all distinguished paths; it is countable, and each of its points
is isolated in $D\cup\{z\}$, whose only limit point is $z$.

Let $Q$ be the fair-coin measure \eqref{eq:faircoin}.  For each $k$, define
$T_k:\Omega\to S_k$ by concatenation:
\begin{equation}\label{eq:faircoinshift}
  T_k(x)\quad=\quad0^{k-1}1x,
\end{equation}
and let $\nu_k=Q\circ T_k^{-1}$.  Thus $\nu_k$ is fair-coin probability on the
tail coordinates after the first $1$.

\begin{lemma}[The shell measures]\label{lem:nu}
For every $k\in\N$ the map $T_k$ is a continuous injection of $\Omega$ onto
$S_k$, and $\nu_k=Q\circ T_k^{-1}$ is a countably additive Borel probability
such that:
\begin{enumerate}[(a),itemsep=.5em,leftmargin=3em,labelsep=1em,topsep=.6em]
\item $\nu_k(S_k)=1$;
\item $\nu_k$ is diffuse; in particular $\nu_k(D\cup\{z\})=0$, where $D$ is the
      countable set \eqref{eq:Dset};
\item $\nu_k(B)>0$ for every nonempty cylinder $B\subseteq S_k$.\qedthm
\end{enumerate}
\end{lemma}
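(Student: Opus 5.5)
The plan is to verify the topological facts about $T_k$ first and then obtain each of (a)--(c) by pulling back to the fair-coin measure $Q$ along $T_k$, using only \eqref{eq:faircoin}, countable additivity of $Q$, and the diffuseness of $Q$ recorded after \eqref{eq:faircoin}.

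\emph{The map $T_k$.} Injectivity is immediate: $T_k(x)=T_k(y)$ forces $x_j=y_j$ for every $j$, since these are coordinates $k+j$ of the two images. Surjectivity onto $S_k$ holds because every $\omega\in S_k$ has the form $0^{k-1}1x$, with $x$ the tail of $\omega$ after time $k$. For continuity it suffices to check preimages of cylinders, which form a basis. Take a word $s$. If $|s|\le k$, then $T_k^{-1}[s]$ is $\Omega$ when $s$ is a prefix of $0^{k-1}1$ and $\varnothing$ otherwise. If $|s|>k$, then $T_k^{-1}[s]$ is $[s']$ when $s=0^{k-1}1s'$ and $\varnothing$ otherwise. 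Every such preimage is open, so $T_k$ is continuous. Hence $T_k$ is Borel measurable, and $\nu_k=Q\circ T_k^{-1}$ is a countably additive Borel probability: countable additivity passes through preimages because preimages preserve disjointness and countable unions.

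\emph{Parts (a)--(c).} For (a), $T_k^{-1}(S_k)=\Omega$ since $T_k$ maps into $S_k$, so $\nu_k(S_k)=Q(\Omega)=1$. For (b), injectivity gives $T_k^{-1}(\{\omega\})$ at most a singleton, so $\nu_k(\{\omega\})\le\sup_xQ(\{x\})=0$ because $Q$ is diffuse. Then $\nu_k(D\cup\{z\})=0$ follows from countable additivity, since $D\cup\{z\}$ is countable and Borel. For (c), a nonempty cylinder $B=[s]\subseteq S_k$ must have $|s|\ge k$ with $s=0^{k-1}1s'$: a shorter word $s$ would give a cylinder $[s]$ containing paths outside $S_k$. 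The computation above then gives $T_k^{-1}(B)=[s']$, so $\nu_k(B)=2^{-|s'|}>0$ by \eqref{eq:faircoin}.

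Nothing here is a real obstacle. The only step needing care is the case split in the preimage computation: words shorter than $k$ versus words of length at least $k$, and prefixes of $0^{k-1}1$ versus non-prefixes. That computation serves twice, once for continuity and once for (c).
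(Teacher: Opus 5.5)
Your proof is correct and follows the paper's argument step for step: injectivity and surjectivity from the concatenation form, then pulling back to $Q$ for (a)--(c), with (b) coming from injectivity plus diffuseness of $Q$ and (c) from $T_k^{-1}([0^{k-1}1s'])=[s']$. The only difference is cosmetic. You verify continuity by computing preimages of cylinders and reuse that computation for (c), whereas the paper observes that agreement of $x,y$ in the first $n$ coordinates gives agreement of $T_k(x),T_k(y)$ in the first $k+n$.
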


\begin{proof}
If $T_k(x)=T_k(y)$ then $0^{k-1}1x=0^{k-1}1y$, so $x=y$; and the image of $T_k$
is the set of paths beginning $0^{k-1}1$, which is $S_k$.  If $x$ and $y$ agree
in their first $n$ coordinates, then $T_k(x)$ and $T_k(y)$ agree in their first
$k+n$; so $T_k$ is continuous, hence Borel measurable, and $\nu_k$ is a
countably additive Borel probability.

(a) $T_k^{-1}(S_k)=\Omega$, so $\nu_k(S_k)=Q(\Omega)=1$.

(b) For $\omega\in\Omega$ the set $T_k^{-1}(\{\omega\})$ is empty if
$\omega\notin S_k$, and a single point otherwise by injectivity.  Since $Q$
vanishes on singletons and on $\varnothing$, we get $\nu_k(\{\omega\})=0$.  The
set $D\cup\{z\}$ is countable, so $\nu_k(D\cup\{z\})=0$ by countable
additivity.

(c) A nonempty cylinder $B\subseteq S_k$ has the form $B=[\,0^{k-1}1s\,]$ for a
possibly empty word $s$: its defining word has length at least $k$, since
otherwise $B\not\subseteq S_k$, and it begins $0^{k-1}1$.  Then
$T_k^{-1}(B)=[\,s\,]$ and $\nu_k(B)=Q([\,s\,])=2^{-\abs{s}}>0$.
\end{proof}

\subsection{A Sequence Resisting Diagonalization and Its Ultrafilter}\label{sec:ultrafilter}

The notion of largeness has to separate two things that ordinarily coincide:
smallness at every fixed tolerance on a large set of indices, and smallness at
all tolerances on one common large set.  An ultrafilter supplies the appropriate
notion of ``large.''

\begin{definition}[Ultrafilter]\label{def:ultrafilter}
A \term{filter} on $\N$ is a family $\Ucal\subseteq 2^{\N}$ such that:
\begin{enumerate}[(i),itemsep=0.5em,leftmargin=3em,labelsep=1em,topsep=0.3em]
\item $\varnothing\notin\Ucal$ and $\N\in\Ucal$;
\item if $A,B\in\Ucal$, then $A\cap B\in\Ucal$;
\item if $A\in\Ucal$ and $A\subseteq B\subseteq\N$, then $B\in\Ucal$.
\end{enumerate}
It is an \term{ultrafilter} if, for every $A\subseteq\N$, exactly one of $A$ and
$A^c$ belongs to $\Ucal$.  It is \term{free} if it contains no finite set.
Members of $\Ucal$ will be called \term{$\Ucal$-large}.\qeddef
\end{definition}

A free ultrafilter contains every cofinite subset of $\N$: if $X^c$ is finite,
then $X^c\notin\Ucal$ by freeness, and so $X\in\Ucal$ because an ultrafilter
contains one of the two.  Conversely, an ultrafilter containing every cofinite
set contains no finite set $A$, since it would then contain both $A$ and $A^c$
and hence, by (ii), the empty set.  The existence of a free ultrafilter follows
from the ultrafilter lemma, a weak form of the axiom of choice.  We use no
deeper set theory.

An ultrafilter also supplies a generalized limit for the one kind of sequence
this paper needs.  Let $(a_k)_{k\ge1}$ be a sequence with $a_k\in\{0,1\}$ for
every $k$.  There is exactly one $L\in\{0,1\}$ such that:
\begin{equation}\label{eq:ulimit}
  \bigl\{\,k\in\N:a_k=L\,\bigr\}\in\Ucal;
\end{equation}
we write $L=\lim_{k\to\Ucal}a_k$ and call it the \term{$\Ucal$-limit} of
$(a_k)$.  Existence and uniqueness are immediate: the sets $\{k:a_k=1\}$ and
$\{k:a_k=0\}$ are complementary, so $\Ucal$ contains exactly one of them by
\cref{def:ultrafilter}, and $L$ is the value the contained one records.

The ultrafilter we need is obtained by extending a filter built from a partition
of $\N$ into infinite blocks.  For $i\in\N$ put:
\begin{equation}\label{eq:blocks}
  I_i\quad=\quad\bigl\{\,2^{\,i-1}(2j-1)\;:\;j\in\N\,\bigr\},
\end{equation}
the set of positive integers whose largest power-of-two divisor is $2^{i-1}$.

\begin{lemma}[Blocks]\label{lem:partition}
$\{I_i\}_{i\ge1}$ is a partition of $\N$ into infinitely many infinite sets.
\qedthm \end{lemma}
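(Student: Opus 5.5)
The plan is to use the $2$-adic valuation. Every $m\in\N$ factors uniquely as $m=2^{v}\,o$ with $v\ge0$ and $o$ odd. The exponent $v$ is the largest power-of-two divisor, and $o=m/2^{v}$ is what remains once all factors of $2$ have been divided out. Setting $i=v+1$ and $j=(o+1)/2$, we get $m=2^{i-1}(2j-1)\in I_i$, so the blocks \eqref{eq:blocks} cover $\N$.

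\emph{Disjointness.} This follows from the uniqueness half of the factorization. Suppose $2^{i-1}(2j-1)=2^{i'-1}(2j'-1)$ with, say, $i<i'$. Dividing both sides by $2^{i-1}$ leaves an odd number on the left and an even number on the right, which is a contradiction. Hence $i=i'$, and then $j=j'$. In particular $I_i\cap I_{i'}=\varnothing$ whenever $i\ne i'$.

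\emph{Nonempty, infinite, and infinitely many blocks.} For fixed $i$, the map $j\mapsto2^{i-1}(2j-1)$ is strictly increasing, hence injective, so each $I_i$ is infinite and in particular nonempty. Distinct indices $i$ give disjoint nonempty blocks, so the family $\{I_i\}_{i\ge1}$ has infinitely many members.

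No step is a real obstacle. The only point requiring care is the uniqueness of the decomposition $m=2^{v}o$, which is elementary: repeatedly halve $m$ until the result is odd, and note that the parity argument above rules out two different exponents.
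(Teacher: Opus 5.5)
Your proof is correct and follows the paper's argument: the unique factorization $k=2^{a}b$ with $b$ odd gives coverage and disjointness, and injectivity of $j\mapsto 2^{i-1}(2j-1)$ makes each block infinite. Your parity check of uniqueness, and your remark that pairwise disjoint nonempty blocks indexed by $\N$ form infinitely many sets, spell out details the paper leaves implicit.
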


\begin{proof}
Every $k\in\N$ has a unique representation $k=2^{a}b$ with $a\ge0$ and $b$ odd;
writing $b=2j-1$ places $k$ in $I_{a+1}$ and in no other block, so the $I_i$ are
pairwise disjoint and cover $\N$.  Each $I_i$ is infinite because
$j\mapsto2^{\,i-1}(2j-1)$ is injective.
\end{proof}

\begin{definition}[Block filter]\label{def:blockfilter}
Let:
\begin{equation}\label{eq:blockfilter}
  \Fcal\quad=\quad\Bigl\{\,X\subseteq\N\;:\;
    \bigl\{\,i\in\N:I_i\setminus X\text{ is finite}\,\bigr\}
    \text{ is cofinite}\,\Bigr\}.
\end{equation}
Thus $X\in\Fcal$ when, for all but finitely many blocks, $X$ contains all but
finitely many elements of that block.\qeddef
\end{definition}

The following construction and the argument that any ultrafilter extending
$\Fcal$ has the property we need are due to
\citet[(7.6), p.~76]{jech2003}.

\begin{lemma}[The block filter]\label{lem:blockfilter}
$\Fcal$ is a filter on $\N$ containing every cofinite subset of $\N$, and every
ultrafilter $\Ucal$ with $\Fcal\subseteq\Ucal$ is free.  At least one such
$\Ucal$ exists.
\qedthm \end{lemma}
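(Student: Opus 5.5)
The argument has three parts: verify the filter axioms for $\Fcal$ directly from \eqref{eq:blockfilter}, note that every cofinite set lies in $\Fcal$, and then use freeness and the ultrafilter lemma as recorded after \cref{def:ultrafilter}.

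For $X\subseteq\N$ write
\[
  G(X)\;=\;\bigl\{\,i\in\N:I_i\setminus X\text{ is finite}\,\bigr\},
\]
so that $X\in\Fcal$ exactly when $G(X)$ is cofinite in $\N$.

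\emph{Filter axioms.}
\begin{enumerate}[(i)]
\item $G(\N)=\N$, so $\N\in\Fcal$. On the other hand $G(\varnothing)=\varnothing$, because every block is infinite by \cref{lem:partition}. Since $\varnothing$ is not cofinite, $\varnothing\notin\Fcal$.
\item Since $I_i\setminus(X\cap Y)=(I_i\setminus X)\cup(I_i\setminus Y)$, we get $G(X)\cap G(Y)\subseteq G(X\cap Y)$. An intersection of two cofinite sets is cofinite, so $X\cap Y\in\Fcal$ whenever $X,Y\in\Fcal$.
\item If $X\subseteq Y$, then $G(X)\subseteq G(Y)$, so $\Fcal$ is closed upward.
\end{enumerate}

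\emph{Cofinite sets.} If $X$ is cofinite, then each $I_i\setminus X\subseteq X^c$ is finite. Hence $G(X)=\N$ and $X\in\Fcal$.

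\emph{Freeness.} Let $\Ucal\supseteq\Fcal$ be an ultrafilter. Then $\Ucal$ contains every cofinite set. By the remark following \cref{def:ultrafilter}, it therefore contains no finite set, so it is free.

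\emph{Existence.} Apply the ultrafilter lemma: every filter on $\N$ extends to an ultrafilter. If one prefers to see the Zorn argument, order the filters containing $\Fcal$ by inclusion. The union of a chain of such filters is again a filter, since axioms (i)--(iii) each involve at most two sets and so are checked inside a single member of the chain. Zorn's lemma then gives a maximal filter $\Ucal$.

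It remains to show that a maximal filter is an ultrafilter. Suppose $A\notin\Ucal$. Then some $B\in\Ucal$ satisfies $B\cap A=\varnothing$; otherwise the family $\{C: C\supseteq B\cap A \text{ for some } B\in\Ucal\}$ would be a filter strictly larger than $\Ucal$, contradicting maximality. From $B\cap A=\varnothing$ we get $B\subseteq A^c$, so $A^c\in\Ucal$. Axioms (i) and (ii) rule out $A$ and $A^c$ both belonging to $\Ucal$, since their intersection is empty. So exactly one of $A$ and $A^c$ lies in $\Ucal$.

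The only step with real content is this maximality argument, and it is standard. Since the paper treats the ultrafilter lemma as given, it may simply be cited instead.
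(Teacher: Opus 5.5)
Your proof is correct and follows the paper's argument step for step: your $G(X)$ is the paper's $\varphi(X)$, and the filter axioms, the inclusion of cofinite sets, freeness, and existence via the ultrafilter lemma are handled exactly as the paper handles them. Your Zorn's-lemma derivation of a maximal filter, and of its being an ultrafilter, is a correct expansion of the ultrafilter lemma, which the paper simply cites.
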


\begin{proof}
Write $\varphi(X)=\{i:I_i\setminus X\text{ is finite}\}$, so that $X\in\Fcal$
if and only if $\varphi(X)$ is cofinite.

Each $I_i\setminus\varnothing=I_i$ is infinite by \cref{lem:partition}, so
$\varphi(\varnothing)=\varnothing$, which is not cofinite; hence
$\varnothing\notin\Fcal$.  And $\varphi(\N)=\N$, so $\N\in\Fcal$.

Since $I_i\setminus(X\cap Y)=(I_i\setminus X)\cup(I_i\setminus Y)$ and a union
of two finite sets is finite, $\varphi(X)\cap\varphi(Y)\subseteq\varphi(X\cap Y)$;
an intersection of two cofinite sets is cofinite and a superset of a cofinite
set is cofinite, so $X,Y\in\Fcal$ gives $X\cap Y\in\Fcal$.  If
$X\subseteq Y$ then $I_i\setminus Y\subseteq I_i\setminus X$, so
$\varphi(X)\subseteq\varphi(Y)$ and $\Fcal$ is upward closed.

If $\N\setminus X$ is finite then $I_i\setminus X$ is finite for every $i$, so
$\varphi(X)=\N$ and $X\in\Fcal$: every cofinite set belongs to $\Fcal$.
Consequently, if $\Fcal\subseteq\Ucal$ with $\Ucal$ an ultrafilter and
$A\subseteq\N$ is finite, then $\N\setminus A\in\Fcal\subseteq\Ucal$, so
$A\notin\Ucal$; that is, $\Ucal$ is free.  An extension exists by the
ultrafilter lemma.
\end{proof}

For $K\subseteq\N$ and a real sequence $(\eta_k)$, we say that $(\eta_k)$
\term{converges to zero along $K$} in the ordinary subsequence sense, and write
$\eta_k\to0$ along $K$, if for every $r>0$ there is $N\in\N$ such that:
\[
  k\in K,\ k\ge N \quad\Longrightarrow\quad \eta_k<r.
\]
The next lemma separates threshold-by-threshold smallness from ordinary
convergence on one large set.

\begin{lemma}[A sequence resisting diagonalization]\label{lem:diagonal-ultrafilter}
There exist a free ultrafilter $\Ucal$ on $\N$ and a sequence
$(\eta_k)_{k\ge1}\subset(0,1)$ such that:
\begin{enumerate}[(a),itemsep=0.6em,leftmargin=3em,labelsep=1em,topsep=0.8em]
\item for every $r>0$,
\begin{equation}\label{eq:fixed-threshold}
  \{k:\eta_k\le r\}\in\Ucal;
\end{equation}
\item there is no $K\in\Ucal$ along which $\eta_k\to0$.\qedthm
\end{enumerate}
\end{lemma}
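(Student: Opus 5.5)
Take $\Ucal$ to be any ultrafilter extending the block filter $\Fcal$; by \cref{lem:blockfilter} one exists and it is free. Define $\eta_k$ blockwise by the partition of \cref{lem:partition}: for $k\in I_i$ put $\eta_k=2^{-i}$, say, or any value in $(0,1)$ depending only on the block index $i$ and tending to zero as $i\to\infty$. So $\eta$ is constant on each block, and the block values decrease to zero. This is the classical device showing that an ultrafilter extending $\Fcal$ is not a P-point, and the two parts of the lemma are the two halves of that argument.

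For part~(a), fix $r>0$. Choose $i_0$ with $2^{-i}\le r$ for all $i\ge i_0$. Then $\{k:\eta_k\le r\}\supseteq\bigcup_{i\ge i_0}I_i$. The set $\bigcup_{i\ge i_0}I_i$ contains every block $I_i$ with $i\ge i_0$ entirely, so $\varphi$ of it is cofinite, and it lies in $\Fcal\subseteq\Ucal$. Upward closure gives \eqref{eq:fixed-threshold}.

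For part~(b), suppose $K\in\Ucal$ and $\eta_k\to0$ along $K$. On each block $\eta$ is the positive constant $2^{-i}$, so convergence forces $K\cap I_i$ to be finite for every $i$: otherwise $K$ would contain infinitely many indices with $\eta_k=2^{-i}$, contradicting the existence of $N$ for $r=2^{-i}$. Then $X=\N\setminus K$ satisfies $I_i\setminus X=I_i\cap K$ finite for every $i$, so $\varphi(X)=\N$ and $X\in\Fcal\subseteq\Ucal$. Since $K$ and its complement both lie in $\Ucal$, property~(ii) gives $\varnothing\in\Ucal$, a contradiction.

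Both parts are short once $\eta$ is chosen constant on blocks. The only real decision is pairing the sequence with the block structure: the values must shrink across blocks, which gives (a), while staying bounded away from zero within each block, which forces any convergent $K$ to meet each block finitely and gives (b). I expect no step to be a serious obstacle. The one thing to check is that the complement of such a $K$ lies in $\Fcal$ exactly through the definition of $\varphi$.
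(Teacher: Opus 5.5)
Your proof is correct and follows the paper's own argument essentially verbatim. The paper also takes an arbitrary ultrafilter extending the block filter $\Fcal$ and makes $\eta$ constant on each block $I_i$ (value $1/(i+1)$ rather than your $2^{-i}$, an immaterial difference); it gets (a) from $\bigcup_{i\ge M}I_i\in\Fcal$, and gets (b) by showing that a $K$ along which $\eta_k\to0$ meets every block finitely, so that $\N\setminus K\in\Fcal$.
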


\begin{proof}
Let $\Ucal$ be any ultrafilter with $\Fcal\subseteq\Ucal$, which is free by
\cref{lem:blockfilter}, and set:
\begin{equation}\label{eq:etadef}
  \eta_k\quad=\quad\frac{1}{i+1}
  \qquad\text{for the unique }i\text{ with }k\in I_i .
\end{equation}
This is unambiguous by \cref{lem:partition}, and
$\eta_k\in\{\tfrac12,\tfrac13,\tfrac14,\ldots\}\subset(0,1)$.  Thus $\eta$ is
constant on each block, and the block values decrease to zero as the block
index grows.

For \eqref{eq:fixed-threshold}, let $r>0$ and choose $M$ with $1/(M+1)\le r$.
Put $T_M=\bigcup_{i\ge M}I_i$.  For $i\ge M$ we have
$I_i\setminus T_M=\varnothing$, so $\varphi(T_M)\supseteq\{i:i\ge M\}$, a
cofinite set; hence $T_M\in\Fcal\subseteq\Ucal$.  Every $k\in T_M$ lies in some
$I_i$ with $i\ge M$ and so has $\eta_k=1/(i+1)\le1/(M+1)\le r$; that is,
$T_M\subseteq\{k:\eta_k\le r\}$, and upward closure of $\Ucal$ gives
\eqref{eq:fixed-threshold}.

For part~(b), suppose $K\in\Ucal$ and $\eta_k\to0$ along $K$.  Fix $i\in\N$ and
apply the definition of convergence along $K$ with $r=1/(i+1)$: there is $N$
such that every $k\in K$ with $k\ge N$ satisfies $\eta_k<1/(i+1)$.  No element
of $I_i$ satisfies that strict inequality, since $\eta$ equals $1/(i+1)$
throughout $I_i$; hence $K\cap I_i\subseteq\{1,\ldots,N-1\}$ is finite.  As $i$
was arbitrary, $I_i\setminus(\N\setminus K)=I_i\cap K$ is finite for every $i$,
so $\varphi(\N\setminus K)=\N$ and $\N\setminus K\in\Fcal\subseteq\Ucal$.
Together with $K\in\Ucal$ this puts $\varnothing$ in $\Ucal$, which is
impossible.
\end{proof}

A set is $\Ucal$-large as soon as it exhausts all but finitely many blocks up to
finitely many points.  So no $\Ucal$-large set can meet every block
finitely, and every $\Ucal$-large set therefore contains infinitely many points
of some single block, on which $\eta$ is constant and positive.

\begin{remark}[Why an ultrafilter, and why it must be chosen]\label{rem:frechet}
Neither part of \cref{lem:diagonal-ultrafilter} survives a more familiar notion
of largeness.  Suppose ``large'' meant ``cofinite,'' so that $\Ucal$ were
replaced by the Fr\'echet filter.  Then part~(a) would say that
$\{k:\eta_k\le r\}$ is cofinite for every $r>0$, which is exactly $\eta_k\to0$,
and part~(b) would fail because $\N$ is itself cofinite.  Nor can the sets in
part~(a) be closed under countable intersection: since $\eta_k>0$ for every $k$,
the sets $\{k:\eta_k\le1/m\}$ have empty intersection, and no filter contains
$\varnothing$.  What the lemma needs is a notion of largeness closed under finite
but not countable intersection --- the same gap between finite and countable
additivity that the counterexample exploits.

The same computation shows that the ultrafilter cannot be left unspecified.
First, a subset of $\N$ belonging to \emph{every} free ultrafilter is cofinite.
For suppose $Y\subseteq\N$ is not cofinite, so that $\N\setminus Y$ is infinite.
Then $\N\setminus Y$ meets every cofinite set, so the family
\[
  \bigl\{\,Z\subseteq\N:Z\supseteq(\N\setminus Y)\cap C
    \mbox{ for some cofinite }C\,\bigr\}
\]
contains no empty set and is closed under intersection and under passage to
supersets; it is therefore a filter, and it contains every cofinite set as well
as $\N\setminus Y$.  Any ultrafilter extending it is free and omits $Y$.  Now
suppose a single sequence $(\eta_k)\subset(0,1)$ satisfied part~(a) of
\cref{lem:diagonal-ultrafilter} against every free ultrafilter.  Then
$\{k:\eta_k\le r\}$ would belong to every free ultrafilter, hence be cofinite,
for every $r>0$; that is $\eta_k\to0$, and part~(b) would then fail against
every free ultrafilter, taking $K=\N$.  So no sequence serves them all, and the
pair $\bigl(\Ucal,(\eta_k)\bigr)$ must be selected rather than assumed --- which
is what \cref{lem:blockfilter} and \cref{lem:diagonal-ultrafilter} do.
\qedthm\end{remark}

\begin{remark}[No P-point will serve]\label{rem:ppoint}
The choice is constrained further, though nothing below depends on it.  Let
$\set{B_m}_{m\ge1}$ be a countable family of subsets of $\N$.  A set
$K\subseteq\N$ is a \term{pseudo-intersection} of the family if
$K\setminus B_m$ is finite for every $m\ge1$; only \emph{almost} inclusion is
asked, and the genuine intersection $\bigcap_{m\ge1}B_m$ is typically empty
even when a pseudo-intersection is infinite.  A free ultrafilter $\Ucal$ on $\N$
is a \term{P-point} if every countable family of $\Ucal$-large sets has a
pseudo-intersection that is itself $\Ucal$-large; this is one of several
equivalent formulations of the notion
\citep[Exercise~7.7, p.~86]{jech2003}.

No P-point admits a sequence of the kind \cref{lem:diagonal-ultrafilter}
produces.  Suppose $\Ucal$ were a P-point and $(\eta_k)\subset(0,1)$ satisfied
part~(a) of that lemma.  The sets $B_m=\set{k:\eta_k\le1/m}$ are then
$\Ucal$-large for every $m\ge1$, so there is a $\Ucal$-large $K$ with
$K\setminus B_m$ finite for every $m$.  Let $r>0$, choose $m$ with $1/m<r$, and
choose $N$ with $K\setminus B_m\subseteq\{1,\ldots,N-1\}$.  Every $k\in K$ with
$k\ge N$ then lies in $B_m$, so $\eta_k\le1/m<r$.  Hence $\eta_k\to0$ along
$K\in\Ucal$, contradicting part~(b).

Nor is the property available for free.  Whether P-points exist is independent
of the usual axioms of set theory.  They exist under the continuum hypothesis,
since every Ramsey ultrafilter is a P-point and a Ramsey ultrafilter exists when
$2^{\aleph_0}=\aleph_1$ \citep[Theorem~7.8, pp.~76--77]{jech2003}; and there is
a model of $\mathsf{ZFC}$ containing no P-point at all, a result of Shelah
recorded by \citet[p.~89]{jech2003} and published in detail by
\citet{wimmers1982}.  \cref{lem:diagonal-ultrafilter} sidesteps the question,
arranging what is needed outright from \cref{lem:blockfilter} and the
ultrafilter lemma.
\qedthm\end{remark}

\begin{remark}[On the word ``diagonal'']\label{rem:diagonal-terminology}
Diagonalization is the standard passage from a family of conditions, each
satisfiable on its own, to a single object satisfying all of them at once: the
diagonal subsequence of analysis, and the countable union of \cref{sec:gap}.
\cref{lem:diagonal-ultrafilter} is the failure of that passage: the bound
$\eta_k\le r$ holds for each $r$ separately, and along no one $\Ucal$-large set
for all $r$ at once.  We therefore speak throughout of resisting
diagonalization, and never of a diagonal sequence.  In analysis a diagonal
sequence is one that the passage succeeds in extracting, which is the opposite
of what \cref{lem:diagonal-ultrafilter} supplies.  The adjective is retained
for two notions only, both of \cref{sec:diagonal}: a \textit{diagonal
condition} is one imposed simultaneously at all thresholds, and so a condition
demanding the very passage that fails here, and a \textit{diagonal
strengthening} of CHT is a strengthening of CHT by such a condition.  Two
clarifications follow: the first concerns a notion that must be kept distinct
from this one, the second a
notion that is exactly
this one and that our $\Ucal$ must fail.

First, a filter on a cardinal $\kappa$ is \textit{normal} when it is closed under
diagonal intersections $\bigtriangleup_{\alpha<\kappa}X_\alpha
=\{\xi<\kappa:\xi\in\bigcap_{\alpha<\xi}X_\alpha\}$
\citep[pp.~91, 95]{jech2003}, a notion developed for regular uncountable
$\kappa$ and, in the measure-theoretic form, for measurable cardinals
\citep[p.~131]{jech2003}.  Our $\Ucal$ lives on $\N$ and is not even closed
under countable intersections, so it is not normal in that sense and nothing
here bears on normality.

Second, and more to the point, the P-point property of \cref{rem:ppoint} --- a
pseudo-intersection in $\Ucal$ for every countable family of $\Ucal$-large sets
--- is diagonalization of that family in the set-theoretic sense.  The
ultrafilter we need must \emph{fail} to have it: part~(b) of
\cref{lem:diagonal-ultrafilter} is precisely a failure of pseudo-intersection.
So ``diagonal'' attributes no diagonalization property to $\Ucal$, and by
\cref{rem:ppoint} no P-point would serve in its place.
\qedthm\end{remark}

\subsection{The Countably Additive Component}\label{sec:ca-component}

The countably additive component has to keep the posteriors at the distinguished
paths under control at every later time.  With the geometry fixed and the
numbers $\eta_k$ in hand, we assemble it: an atom at each distinguished path,
together with a diffuse remainder of relative weight $\eta_k$ in the
surrounding shell.

Let $(\Ucal,(\eta_k))$ be supplied by \cref{lem:diagonal-ultrafilter}, put
$t_k=2^{-k}$, and define:
\begin{equation}\label{eq:R0}
  R_0
  \quad=\quad\sum_{k=1}^{\infty}t_k
    \biggl((1-\eta_k)\delta_{d_k}+\eta_k\nu_k\biggr).
\end{equation}
Because $\sum_kt_k=1$, this is a countably additive Borel probability.  Its total
mass on $S_k$ is $t_k$, of which $t_k(1-\eta_k)$ is concentrated at $d_k$ and
$t_k\eta_k$ is diffuse.  We call $\eta_k$ the \term{diffuse fraction} of the
$k$th shell; it is exactly the posterior error at $d_k$ at time $k$, computed
in \eqref{eq:error-eta}.

The atom at $d_k$ is what keeps those posteriors under control.
Making the time-$k$ errors large is easy; keeping CHT while doing so is not.
CHT requires the infinite hitting union $O(A,\delta)$ to receive the limit of
the probabilities of its finite pieces, for every event $A$ and every threshold
$\delta$.  The finite pieces are clopen, and \cref{sec:P-companion} will hand
them to a countably additive companion measure; what remains is to control which
distinguished paths ever hit level $\delta$, and for that a bound on
$P\bigl(A\given[d_k\restrict n]\bigr)$ at the single time $n=k$ is of no use.
The bound must hold at every $n\ge k$ at once.  An atom at $d_k$ delivers one:
it places a floor under the denominator that later conditioning cannot erode,
while an event omitting $d_k$ can draw on nothing but the diffuse remainder in
the numerator.  Nor is the atom a convenience that another design might avoid:
by \cref{prop:noatom}, no countably additive component leaving the distinguished
paths unatomized has the CHT property at all.  We record the bound in the
generality in which it is used.

\begin{lemma}[Atoms bound later posteriors]\label{lem:atom}
Let $\mu$ be a countably additive Borel probability on $\Omega$, let $v$ be a
probability function on $\Borel$, let $\beta\in(0,1)$, and put
$P_v=\beta v+(1-\beta)\mu$.  Let $d\in\Omega$ and $n_0\in\N$, and suppose
\begin{enumerate}[(i),itemsep=.35em,leftmargin=2.8em,labelsep=.8em,topsep=.5em]
\item $\mu(\{d\})>0$, and
\item $v\bigl([d\restrict n]\bigr)=0$ for every $n\ge n_0$.
\end{enumerate}
Then for every $A\in\Borel$ with $d\notin A$ and every $n\ge n_0$:
\[
  P_v\bigl(A\given[d\restrict n]\bigr)
  \;\le\;\frac{\gamma_n}{\mu(\{d\})+\gamma_n},
  \qquad\text{where }
  \gamma_n=\mu\bigl([d\restrict n]\setminus\{d\}\bigr).
\]
\qedthm \end{lemma}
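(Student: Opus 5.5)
The plan is a direct computation from the definition \eqref{eq:posterior} of the posterior, splitting numerator and denominator into their $v$-part and their $\mu$-part.

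Fix $A\in\Borel$ with $d\notin A$ and $n\ge n_0$, and abbreviate the cylinder $[d\restrict n]$ by $B$. Hypothesis (ii) gives $v(B)=0$. By monotonicity of the finitely additive $v$, it follows that $v(A\cap B)=0$. Hence
\[
  P_v(A\cap B)=(1-\beta)\,\mu(A\cap B)
  \qquad\text{and}\qquad
  P_v(B)=(1-\beta)\,\mu(B).
\]
The denominator is positive, because $\mu(B)\ge\mu(\{d\})>0$ by (i) and because $\beta<1$. So the posterior is well defined, and the factor $1-\beta$ cancels:
\[
  P_v\bigl(A\given B\bigr)=\frac{\mu(A\cap B)}{\mu(B)}.
\]
Note that the positivity assumption \eqref{eq:posi} is not needed here, since positivity at this particular cylinder follows from (i).

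Next I bound the numerator and rewrite the denominator.

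\emph{Numerator.} Since $d\notin A$, we have $A\cap B\subseteq B\setminus\{d\}$. Therefore $\mu(A\cap B)\le\gamma_n$.

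\emph{Denominator.} The path $d$ lies in $B$, and $\{d\}$ is Borel. Additivity of $\mu$ then gives $\mu(B)=\mu(\{d\})+\gamma_n$.

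This yields
\[
  P_v\bigl(A\given B\bigr)\;\le\;\frac{\gamma_n}{\mu(\{d\})+\gamma_n}.
\]
Only finite additivity of $\mu$ is actually used. Countable additivity matters only later, when one wants $\gamma_n\to0$.

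There is no real obstacle here. The one point needing care is the vanishing of $v$ on $A\cap B$, which must come from monotonicity under $v(B)=0$ rather than from any countable additivity of $v$. I would also check the degenerate case $\gamma_n=0$: the bound is then $0$, which is consistent, because then $\mu(A\cap B)=0$.
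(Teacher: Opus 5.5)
Your proof is correct and follows essentially the same route as the paper's: $v$ vanishes on $A\cap[d\restrict n]$ by monotonicity, the numerator is at most $\gamma_n$ because $d\notin A$, and the denominator splits over $\{d\}$ and $[d\restrict n]\setminus\{d\}$. The only cosmetic difference is that you use the exact identity $P_v([d\restrict n])=(1-\beta)\mu([d\restrict n])$ and cancel the factor $1-\beta$, whereas the paper uses only the lower bound $P_v([d\restrict n])\ge(1-\beta)\mu([d\restrict n])$.
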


\begin{proof}
Let $n\ge n_0$ and let $A\in\Borel$ with $d\notin A$.  Then
$A\cap[d\restrict n]\subseteq[d\restrict n]\setminus\{d\}$, so
$\mu(A\cap[d\restrict n])\le\gamma_n$ by monotonicity; and
$v(A\cap[d\restrict n])\le v([d\restrict n])=0$ by (ii) and monotonicity.  Hence
the numerator of $P_v(A\given[d\restrict n])$ is at most $(1-\beta)\gamma_n$.
Since $\{d\}$ and $[d\restrict n]\setminus\{d\}$ partition $[d\restrict n]$,
the denominator satisfies
$P_v([d\restrict n])\ge(1-\beta)\mu([d\restrict n])
=(1-\beta)(\mu(\{d\})+\gamma_n)$.  Dividing gives the bound.
\end{proof}

\subsection{The Ultrafilter Component}\label{sec:u-component}

The finitely additive component has to be invisible to clopen events.  Both of
the criteria at issue are conditions about clopen and open events: approximation
asks that every Borel event lie within $\eps$, in $P$-symmetric difference, of a
clopen event, and CHT asks a continuity property of the open hitting unions.  A
countably additive probability with both properties converges almost uniformly,
because the argument of \cref{sec:gap} is then valid; so $P$ must be merely
finitely additive, and its finitely additive part must be one that no clopen
event detects.  A two-valued probability read along the distinguished paths is
such a part.  Since $d_k\to z$, a clopen event has the same truth value at $d_k$
for all large $k$ as it has at $z$, so on clopen events such a probability
agrees with the point mass at $z$; on the countable set $D$ of \eqref{eq:Dset},
by contrast, it takes the value $1$ while giving each singleton $\{d_k\}$ the
value $0$, which is precisely the failure of countable additivity.

Formally, the component regards a Borel event as true when the indices of the
distinguished paths it contains form a $\Ucal$-large set.  Define:
\begin{equation}\label{eq:u}
  u(A)\quad=\quad
  \begin{cases}
    1 &\mbox{if }\;\{k:d_k\in A\}\in\Ucal;\\
    0&\mbox{if }\;\{k:d_k\in A\}\notin\Ucal,
  \end{cases}
  \qquad A\in\Borel.
\end{equation}

\begin{lemma}[Elementary properties of the ultrafilter probability]
\label{lem:u-properties}
The set function $u$ is a two-valued finitely additive probability.  Moreover,
for all Borel $A,B$:
\begin{enumerate}[(a),itemsep=0.6em,leftmargin=3em,labelsep=1em,topsep=0.8em]
\item $u(A\cap B)=u(A)u(B)$;
\item if $u(A)=u(B)$, then $u(A\symdiff B)=0$.\qedthm
\end{enumerate}
\end{lemma}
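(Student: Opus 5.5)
The plan is to reduce everything to the index map $\kappa(A)=\{k\in\N:d_k\in A\}$, so that $u(A)=1$ exactly when $\kappa(A)\in\Ucal$, and to read each claim off the ultrafilter axioms of \cref{def:ultrafilter}. The map $\kappa$ commutes with the Boolean operations: $\kappa(A\cap B)=\kappa(A)\cap\kappa(B)$, $\kappa(A\cup B)=\kappa(A)\cup\kappa(B)$, $\kappa(A^c)=\N\setminus\kappa(A)$, and $\kappa(\Omega)=\N$. These identities are immediate from the definition and carry every step below.

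First, $u$ is a two-valued probability function. Its values lie in $\{0,1\}$ by \eqref{eq:u}, and $u(\Omega)=1$ because $\N\in\Ucal$. For finite additivity, take disjoint Borel $A,B$, so that $\kappa(A)$ and $\kappa(B)$ are disjoint. At most one of them lies in $\Ucal$, since otherwise their intersection $\varnothing$ would lie in $\Ucal$. We then need $\kappa(A)\cup\kappa(B)\in\Ucal$ if and only if one of $\kappa(A),\kappa(B)$ lies in $\Ucal$:
\begin{itemize}
\item The backward direction is upward closure.
\item For the forward direction, suppose neither lies in $\Ucal$. By the ultrafilter property both complements lie in $\Ucal$, so their intersection, which is the complement of the union, lies in $\Ucal$. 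The union itself is then excluded.
\end{itemize}
Hence $u(A\cup B)=u(A)+u(B)$ in every case.

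For (a), $u(A\cap B)=1$ if and only if $\kappa(A)\cap\kappa(B)\in\Ucal$. This holds if and only if both sets lie in $\Ucal$, using closure under intersection in one direction and upward closure in the other. That is exactly the statement $u(A)u(B)=1$.

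For (b), write $A\symdiff B=(A\cap B^c)\cup(A^c\cap B)$ as a disjoint union. Finite additivity and (a) give
\[
  u(A\symdiff B)=u(A)\bigl(1-u(B)\bigr)+\bigl(1-u(A)\bigr)u(B),
\]
where $u(B^c)=1-u(B)$ also follows from finite additivity. This expression vanishes when $u(A)=u(B)\in\{0,1\}$.

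None of the steps is a real obstacle. The only point needing care is the forward direction of finite additivity, which is where the ultrafilter (not merely filter) property is essential.
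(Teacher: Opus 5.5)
Your proof is correct and follows the paper's argument for finite additivity and part~(a) essentially verbatim, working with the index sets $\{k:d_k\in A\}$ and the ultrafilter axioms. The only variation is in part~(b). You derive it algebraically from finite additivity and (a) via the disjoint decomposition $A\symdiff B=(A\cap B^c)\cup(A^c\cap B)$, whereas the paper argues directly that the complement of the index set of $A\symdiff B$ contains $I_A\cap I_B$ or $I_A^c\cap I_B^c$, whichever lies in $\Ucal$; both routes are valid.
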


\begin{proof}
Normalization follows from $\N\in\Ucal$.  If $A$ and $B$ are disjoint, the index
sets $I_A=\{k:d_k\in A\}$ and $I_B=\{k:d_k\in B\}$ are disjoint.  An ultrafilter
can contain at most one of them.  If it contains neither, then it contains both
complements and hence the complement of $I_A\cup I_B$; thus it does not contain
the union.  Therefore $u(A\cup B)=u(A)+u(B)$.

For part~(a), if $u(A)=u(B)=1$, then $I_A\cap I_B\in\Ucal$; if either value is
zero, then $I_A\cap I_B$ is contained in a set outside $\Ucal$ and hence is
outside $\Ucal$.  For part~(b), if both values are one then
$I_A\cap I_B\in\Ucal$, while if both are zero then
$I_A^c\cap I_B^c\in\Ucal$.  In either case the complement of the index set for
$A\symdiff B$ belongs to $\Ucal$.
\end{proof}

\subsection{The Probability Function and Its Companion}\label{sec:P-companion}

With both components in hand we can assemble $P$, and with it the countably
additive measure that clopen events cannot distinguish from it.  Fix
$\alpha\in(0,1)$ and set:
\begin{equation}\label{eq:P}
  P\;=\;\alpha u\,+\,(1-\alpha)R_0.
\end{equation}
The countably additive probability
\begin{equation}\label{eq:R-companion}
  R\;=\;\alpha\delta_z+(1-\alpha)R_0
\end{equation}
will serve as the \term{companion measure} of $P$.  The reason for introducing
$R$ is that clopen
events cannot distinguish the generalized limit $u$ from ordinary point mass at
$z$: since $d_k\to z$, membership of $d_k$ in a clopen set is eventually the same
as membership of $z$.

\begin{proposition}\label{prop:basic-properties}
The set function $P$ is a merely finitely additive probability.  Moreover:
\begin{equation}\label{eq:clopen-agreement}
  P(C)\;=\; R(C) \qquad \mbox{for all }C\in\Clop,
\end{equation}
and every nonempty cylinder has strictly positive $P$-probability.
\qedthm \end{proposition}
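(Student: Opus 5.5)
The proof has three parts: $P$ is a probability function, it agrees with $R$ on clopen events, and it charges every cylinder but is not countably additive. I would take them in that order.

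\emph{$P$ is a probability function.} By \cref{lem:u-properties}, $u$ is a finitely additive probability on $\Borel$. $R_0$ is a countably additive Borel probability, since it is a convex combination with weights $t_k$ summing to one of the probability measures $(1-\eta_k)\delta_{d_k}+\eta_k\nu_k$. Nonnegativity, normalization and finite additivity are all preserved by convex combinations, so $P=\alpha u+(1-\alpha)R_0$ is a probability function.

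\emph{Agreement on clopen events.} Since $P$ and $R$ share the term $(1-\alpha)R_0$, \eqref{eq:clopen-agreement} reduces to $u(C)=\delta_z(C)$ for every $C\in\Clop$.
\begin{itemize}
\item Suppose $z\in C$. Because $C$ is open and the cylinders form a neighborhood basis, some $Z_n=[0^n]$ is contained in $C$. Then $d_k\in Z_n\subseteq C$ for every $k>n$, so $\{k:d_k\in C\}$ is cofinite.
\item Suppose $z\notin C$. Then $z\in C^c$, which is also clopen, and the same argument shows $\{k:d_k\notin C\}$ is cofinite.
\end{itemize}
The ultrafilter $\Ucal$ is free, so it contains every cofinite set (\cref{lem:blockfilter}). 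Hence $u(C)=1$ exactly when $z\in C$, which is $u(C)=\delta_z(C)$.

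\emph{Positivity on cylinders.} By the agreement just proved, $P([s])=R([s])\ge(1-\alpha)R_0([s])$, so it suffices to show $R_0$ charges every nonempty cylinder. A cylinder $[s]$ either contains $z$ or meets some shell $S_k$.
\begin{itemize}
\item If $[s]$ meets $S_k$, it contains a nonempty subcylinder of $S_k$. That subcylinder has positive $\nu_k$-mass by \cref{lem:nu}(c), so $R_0([s])\ge t_k\eta_k\cdot(\text{that mass})>0$.
\item If $z\in[s]$, then $s=0^n$ and $d_{n+1}\in[s]$, so $R_0([s])\ge t_{n+1}(1-\eta_{n+1})>0$.
\end{itemize}
Here we use $\eta_k\in(0,1)$ and $t_k>0$.

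\emph{$P$ is not countably additive.} Take the countable set $D=\{d_k:k\in\N\}$, which is Borel.
\begin{itemize}
\item Since $\{k:d_k\in D\}=\N\in\Ucal$, we have $u(D)=1$.
\item Each singleton satisfies $u(\{d_k\})=0$, because $\{k\}$ is finite and so not in the free ultrafilter.
\end{itemize}
By countable additivity of $R_0$, $\sum_k P(\{d_k\})=(1-\alpha)\sum_kR_0(\{d_k\})=(1-\alpha)R_0(D)$. On the other hand, $P(D)=\alpha+(1-\alpha)R_0(D)$. Since $\alpha>0$, $P(D)$ exceeds $\sum_kP(\{d_k\})$, so countable additivity fails on the disjoint family of singletons.

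The only step with real content is the clopen agreement: clopen sets are neighborhoods of $z$ or of its complement, so they capture cofinitely many $d_k$, and the free ultrafilter contains cofinite sets. The remaining steps are routine bookkeeping.
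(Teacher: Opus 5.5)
Your proof is correct and follows essentially the same route as the paper's: clopen agreement via the fact that a clopen set contains cofinitely many $d_k$ exactly when it contains $z$, positivity from Lemma~\ref{lem:nu}(c) applied to subcylinders of the shells, and failure of countable additivity on the singletons of $D$ with a gap of exactly $\alpha$. The differences are cosmetic. You handle cylinders through $z$ with the atom of $R_0$ at $d_{n+1}$ rather than the paper's bound $R(B)\ge\alpha$. You also avoid computing $R_0(D)$ explicitly by invoking countable additivity of $R_0$, and you verify finite additivity of the mixture, which the paper leaves implicit.
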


\begin{proof}
\emph{Failure of countable additivity.}
Let $D$ be the set of distinguished paths \eqref{eq:Dset}.  Since
$\{k:d_k\in D\}=\N\in\Ucal$, we have $u(D)=1$; and $u(\{d_k\})=0$ for every $k$,
because a free ultrafilter contains no singleton.  Thus:
\begin{align*}
  P(D)
    \quad&=\quad\alpha\;+\;(1-\alpha)\sum_{k=1}^{\infty}t_k(1-\eta_k),\\
  \sum_{k=1}^{\infty}P(\{d_k\})
    \quad&=\quad(1-\alpha)\sum_{k=1}^{\infty}t_k(1-\eta_k).
\end{align*}
The two quantities differ by $\alpha$, so $P$ is not countably additive.

\emph{Agreement on clopen events.}
If $C$ is clopen, continuity of $\ind_C$ and the convergence $d_k\to z$ imply
that $\ind_C(d_k)$ is eventually equal to $\ind_C(z)$.  Every cofinite set belongs
to $\Ucal$, so:
\[
  u(C)\;=\;\delta_z(C).
\]
Substitution into \eqref{eq:P} and \eqref{eq:R-companion} gives
\eqref{eq:clopen-agreement}.

\emph{Positivity.}
Let $B$ be a nonempty cylinder.  If $z\in B$, then
$P(B)=R(B)\ge\alpha$.  If $z\notin B$, there is a unique $k$ such that
$B\subseteq S_k$.  Since $\nu_k(B)>0$, it follows that:
\[
  P(B)\;=\;R(B)\;\ge\;(1-\alpha)t_k\eta_k\nu_k(B)>0.
\]
\end{proof}

Applied to the construction, \cref{lem:atom} yields the estimate used
throughout \cref{sec:main} and \cref{sec:positive}.  Only two features of the
data enter the proof --- that the diffuse fractions lie in $(0,1)$ and that the
ultrafilter is free --- so we state the estimate for an arbitrary sequence of
fractions and an arbitrary free ultrafilter, and read off the two cases we need.

\begin{corollary}[The shell estimate]\label{cor:shell}
Let $\Vcal$ be a free ultrafilter on $\N$, let $u_{\Vcal}$ be defined from
$\Vcal$ by \eqref{eq:u}, let $(\lambda_k)_{k\ge1}\subset(0,1)$, and put:
\[
  R_0^{\lambda}
  \;=\;\sum_{k=1}^{\infty}t_k
    \bigl((1-\lambda_k)\delta_{d_k}+\lambda_k\nu_k\bigr),
  \qquad
  P^{\lambda}\;=\;\alpha u_{\Vcal}+(1-\alpha)R_0^{\lambda}
  \quad\mbox{with }\alpha\in(0,1).
\]
Then for every $k\in\N$, every $A\in\Borel$ with $d_k\notin A$, and every
$n\ge k$:
\begin{equation}\label{eq:shell-estimate}
  P^{\lambda}\bigl(A\given[d_k\restrict n]\bigr)\;\le\;\lambda_k .
\end{equation}
In particular the bound holds for the probability function $P$ of \eqref{eq:P},
with $\lambda_k=\eta_k$, and for the probability functions of
\cref{sec:positive}, with $\lambda_k=\theta_k$.
\qedthm \end{corollary}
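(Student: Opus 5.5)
The plan is to apply \cref{lem:atom} with $\mu=R_0^{\lambda}$, $v=u_{\Vcal}$, $\beta=\alpha$, $d=d_k$ and $n_0=k$, and then compute the quantities $\mu(\{d_k\})$ and $\gamma_n$ that the lemma leaves in its bound. First I will check hypothesis (i). Among the summands of $R_0^{\lambda}$, the point mass $\delta_{d_j}$ charges $d_k$ only when $j=k$, and each $\nu_j$ vanishes on $D$ by \cref{lem:nu}(b). Hence $R_0^{\lambda}(\{d_k\})=t_k(1-\lambda_k)>0$, since $\lambda_k<1$.

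Next comes hypothesis (ii). For $n\ge k$, the cylinder $[d_k\restrict n]=[0^{k-1}10^{n-k}]$ lies inside $S_k$, so the only distinguished path it contains is $d_k$. The index set $\{j:d_j\in[d_k\restrict n]\}$ is therefore $\{k\}$. This is finite, so it lies outside the free ultrafilter $\Vcal$, and $u_{\Vcal}([d_k\restrict n])=0$ by \eqref{eq:u}.

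Then I compute $\gamma_n=R_0^{\lambda}([d_k\restrict n]\setminus\{d_k\})$. Shells $S_j$ with $j\ne k$ are disjoint from $[d_k\restrict n]\subseteq S_k$, so their summands contribute nothing. The $k$th summand contributes $t_k\lambda_k\nu_k([d_k\restrict n]\setminus\{d_k\})$, because the atom sits exactly at the removed point. Bounding $\nu_k(\cdot)\le1$ gives $\gamma_n\le t_k\lambda_k$.

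The bound $\gamma/(c+\gamma)$ is increasing in $\gamma$, so \cref{lem:atom} yields
\[
P^{\lambda}\bigl(A\given[d_k\restrict n]\bigr)\le\frac{t_k\lambda_k}{t_k(1-\lambda_k)+t_k\lambda_k}=\lambda_k .
\]
The conditional probability is defined because \cref{prop:basic-properties} gives positivity. Its proof carries over verbatim to $P^{\lambda}$, since it uses only $\lambda_k>0$ and the freeness of $\Vcal$.

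The main care point is hypothesis (ii): it needs $n\ge k$ so that the cylinder is confined to the shell $S_k$. For $n<k$ the cylinder is $Z_n$, which contains cofinitely many $d_j$, and $u_{\Vcal}$ would give it mass $1$. The final sentence of the statement then follows by specialization, taking $\Vcal=\Ucal$ and $\lambda=\eta$ to recover \eqref{eq:P}.
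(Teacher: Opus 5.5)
Your proposal is correct and follows the paper's proof essentially step for step. Both apply \cref{lem:atom} with $\mu=R_0^{\lambda}$, $v=u_{\Vcal}$, $\beta=\alpha$, $d=d_k$, $n_0=k$, and both compute $R_0^{\lambda}(\{d_k\})=t_k(1-\lambda_k)$ and $\gamma_n=t_k\lambda_k\,\nu_k([d_k\restrict n])$; the only difference is that your monotonicity-in-$\gamma$ step stands in for the paper's direct check that $\lambda_kq/(1-\lambda_k+\lambda_kq)\le\lambda_k$. Your separate appeal to positivity is harmless but unnecessary, since the atom at $d_k$ already makes the denominator positive.
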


\begin{proof}
Fix $k$ and $n\ge k$.  The paths $d_1,d_2,\ldots$ are distinct, and each $\nu_j$
is diffuse by \cref{lem:nu}(b), so
\[
  R_0^{\lambda}(\{d_k\})
  =\sum_{j\ge1}t_j\bigl[(1-\lambda_j)\delta_{d_j}(\{d_k\})
     +\lambda_j\nu_j(\{d_k\})\bigr]
  =t_k(1-\lambda_k)>0 .
\]
Since $n\ge k$, the cylinder $[d_k\restrict n]=[\,0^{k-1}1\,0^{\,n-k}\,]$ is
contained in $S_k$, and only the $j=k$ term of $R_0^{\lambda}$ charges it:
$\delta_{d_j}$ does so only for $j=k$, because the defining word places the
first $1$ in position $k$; and $\nu_j$ does so only for $j=k$, because
$\nu_j(S_j)=1$ by \cref{lem:nu}(a) while the shells are pairwise disjoint.
Using diffuseness once more,
\[
  \gamma_n=R_0^{\lambda}\bigl([d_k\restrict n]\setminus\{d_k\}\bigr)
  =t_k\lambda_k\,\nu_k\bigl([d_k\restrict n]\bigr)=t_k\lambda_kq,
  \qquad q=\nu_k\bigl([d_k\restrict n]\bigr)\in[0,1].
\]
Also $\{j:d_j\in[d_k\restrict n]\}=\{k\}$ is finite, so
$u_{\Vcal}([d_k\restrict n])=0$ because $\Vcal$ is free.  \cref{lem:atom},
applied with $\mu=R_0^{\lambda}$, $v=u_{\Vcal}$, $\beta=\alpha$, $d=d_k$ and
$n_0=k$, therefore gives
\[
  P^{\lambda}\bigl(A\given[d_k\restrict n]\bigr)
  \;\le\;\frac{t_k\lambda_kq}{t_k(1-\lambda_k)+t_k\lambda_kq}
  \;=\;\frac{\lambda_kq}{1-\lambda_k+\lambda_kq}
  \;\le\;\lambda_k ,
\]
the last inequality because
$\lambda_kq\le\lambda_k(1-\lambda_k+\lambda_kq)$ reduces to
$q(1-\lambda_k)\le1-\lambda_k$, which holds as $q\le1$ and $\lambda_k<1$.
\end{proof}

\subsection{Posteriors Under the Companion Measure}\label{sec:companion-posteriors}

The next lemma is the bridge back to ordinary countably additive probability.
For each Borel event $A$, we alter only whether the point $z$ belongs to $A$, so
that its membership agrees with the ultrafilter value of $A$.  The resulting
event has, under a countably additive companion, exactly the posterior process
that $A$ has under the finitely additive probability.  Only two features of the
countably additive component enter, so we state the lemma for an arbitrary one
giving $z$ probability zero, and for an arbitrary free ultrafilter;
\cref{sec:positive} uses it in that generality.

\begin{lemma}[Countably additive companion]\label{lem:companion}
Let $\Vcal$ be a free ultrafilter on $\N$, let $u_{\Vcal}$ be defined from
$\Vcal$ by \eqref{eq:u}, let $\mu_0$ be a countably additive Borel probability
on $\Omega$ with $\mu_0(\{z\})=0$, let $\alpha\in(0,1)$, and put:
\begin{equation}\label{eq:companion-pair}
  P'\;=\;\alpha u_{\Vcal}+(1-\alpha)\mu_0,
  \qquad
  R'\;=\;\alpha\delta_z+(1-\alpha)\mu_0 .
\end{equation}
For $A\in\Borel$, define:
\begin{equation}\label{eq:Asharp}
 A^{\sharp}\quad=\quad
 \begin{cases}
   A\cup\{z\}&\mbox{if }u_{\Vcal}(A)=1;\\
   A\setminus\{z\}&\mbox{if }u_{\Vcal}(A)=0,
 \end{cases}
\end{equation}
the dependence on $\Vcal$ being suppressed in the notation.  Then, for every
clopen $C$:
\begin{equation}\label{eq:companion-intersection}
  P'(A\cap C)\;=\;R'(A^{\sharp}\cap C);
\end{equation}
in particular $P'(C)=R'(C)$.  If moreover $P'$ satisfies \eqref{eq:posi}, then:
\begin{equation}\label{eq:posterior-companion}
  P'_n(A)(\omega)
  \;=\;R'\bigl(A^{\sharp}\given[\omega\restrict n]\bigr)
\end{equation}
for every $\omega\in\Omega$ and every $n\in\N$.  Taking $\Vcal=\Ucal$ and
$\mu_0=R_0$ gives $P'=P$ and $R'=R$.\qedthm
\end{lemma}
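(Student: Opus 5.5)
The plan is to prove \eqref{eq:companion-intersection} by splitting $P'$ and $R'$ into their two components and matching them term by term. The posterior identity \eqref{eq:posterior-companion} then follows by taking ratios.

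Fix $A\in\Borel$ and a clopen $C$.

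\emph{Countably additive parts.} These agree because $A\cap C$ and $A^{\sharp}\cap C$ differ at most in the single point $z$. Since $\mu_0(\{z\})=0$, monotonicity and additivity give $\mu_0(A\cap C)=\mu_0(A^{\sharp}\cap C)$.

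\emph{Finitely additive parts.} It remains to show $u_{\Vcal}(A\cap C)=\delta_z(A^{\sharp}\cap C)$. By \cref{lem:u-properties}(a), $u_{\Vcal}(A\cap C)=u_{\Vcal}(A)\,u_{\Vcal}(C)$. The argument in the proof of \cref{prop:basic-properties} uses only that $\Vcal$ is free and that $d_k\to z$, so it gives $u_{\Vcal}(C)=\ind_C(z)$ for any free $\Vcal$. On the other side, $\delta_z(A^{\sharp}\cap C)=\ind_{A^{\sharp}}(z)\,\ind_C(z)$. By the definition \eqref{eq:Asharp}, $\ind_{A^{\sharp}}(z)=u_{\Vcal}(A)$. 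So the two products coincide, and multiplying by $\alpha$ and $1-\alpha$ and adding yields \eqref{eq:companion-intersection}. Taking $A=\Omega$, for which $u_{\Vcal}(\Omega)=1$ and hence $\Omega^{\sharp}=\Omega$, gives $P'(C)=R'(C)$.

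\emph{Posteriors.} Apply \eqref{eq:companion-intersection} with $C=[\omega\restrict n]$, which is clopen. The numerator $P'(A\cap[\omega\restrict n])$ becomes $R'(A^{\sharp}\cap[\omega\restrict n])$, and the denominator $P'([\omega\restrict n])$ becomes $R'([\omega\restrict n])$. Under \eqref{eq:posi} this denominator is positive, so the ratio is $R'(A^{\sharp}\given[\omega\restrict n])$, which is \eqref{eq:posterior-companion}.

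\emph{Specialization.} The final sentence is immediate. $R_0$ gives $z$ measure zero, since each $\delta_{d_k}$ misses $z$ and each $\nu_k$ is diffuse by \cref{lem:nu}(b), so countable additivity gives $R_0(\{z\})=0$. Substituting $\Vcal=\Ucal$ and $\mu_0=R_0$ into \eqref{eq:companion-pair} then reproduces \eqref{eq:P} and \eqref{eq:R-companion}.

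The only step needing care is the clopen agreement $u_{\Vcal}(C)=\ind_C(z)$, since it must be verified for an arbitrary free ultrafilter. Because $C$ is open or closed according to whether it contains $z$, $\ind_C(d_k)=\ind_C(z)$ holds for all $k$ outside a finite set. That cofinite index set belongs to every free ultrafilter, which settles the point. Everything else is bookkeeping.
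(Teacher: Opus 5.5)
Your proof is correct and follows the paper's argument essentially step for step: the clopen agreement $u_{\Vcal}(C)=\ind_C(z)$ from $d_k\to z$ and freeness, multiplicativity from \cref{lem:u-properties}(a), the $\mu_0$-null point $z$, and division under \eqref{eq:posi}. Your explicit check that $R_0(\{z\})=0$ in the specialization step is a small addition that the paper leaves implicit at this point.
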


\begin{proof}
Let $C$ be clopen.  Since $\ind_C$ is continuous and $d_k\to z$, we have
$\ind_C(d_k)=\ind_C(z)$ for all large $k$; that index set is cofinite, hence
$\Vcal$-large, so $u_{\Vcal}(C)=\ind_C(z)$.  Multiplicativity of the two-valued
probability $u_{\Vcal}$ (\cref{lem:u-properties}(a), whose proof uses nothing
about the ultrafilter beyond its being one) therefore gives
\[
  u_{\Vcal}(A\cap C)=u_{\Vcal}(A)\,\ind_C(z)
    =\ind_{A^{\sharp}}(z)\,\ind_C(z)
    =\delta_z\bigl(A^{\sharp}\cap C\bigr),
\]
the middle step because $\ind_{A^{\sharp}}(z)=1$ exactly when
$u_{\Vcal}(A)=1$, by \eqref{eq:Asharp}.  Also
$A\symdiff A^{\sharp}\subseteq\{z\}$ and $\mu_0(\{z\})=0$, so
$\mu_0(A\cap C)=\mu_0(A^{\sharp}\cap C)$.  Combining the two with weights
$\alpha$ and $1-\alpha$ gives \eqref{eq:companion-intersection}, and $A=\Omega$
gives $P'(C)=R'(C)$.  Under \eqref{eq:posi} the common denominator
$P'([\omega\restrict n])=R'([\omega\restrict n])$ is positive, and division
yields \eqref{eq:posterior-companion}.
\end{proof}

Every question about the posteriors generated by $P$ is thereby converted into a
question about ordinary conditional probabilities under a countably additive
measure, to which the results of \cref{sec:necessity} apply.

The construction is complete.  For reference in \cref{sec:main}, its
ingredients are: the ultrafilter $\Ucal$ and the sequence $(\eta_k)$ of
\cref{lem:diagonal-ultrafilter}; the limit path $z$, the shells $S_k$, the
distinguished paths $d_k$ and their set $D$, and the zero-cylinders $Z_n$, from
\eqref{eq:shellsandpaths}, \eqref{eq:clopenzeropaths}, and \eqref{eq:Dset}; the
weights $t_k=2^{-k}$ and the diffuse shell measures $\nu_k$ of
\eqref{eq:faircoinshift}; the countably additive probability $R_0$ of
\eqref{eq:R0}; the ultrafilter probability $u$ of \eqref{eq:u}; the probability
function $P$ of \eqref{eq:P}; its countably additive companion $R$ of
\eqref{eq:R-companion}; and the modified event $A^{\sharp}$ of
\eqref{eq:Asharp}.

\section{Approximation and CHT Hold; Almost-Uniform Convergence Fails}\label{sec:main}

We show that $P$ satisfies both of Nielsen's criteria, that its posteriors
nevertheless fail to converge almost uniformly, and that they do converge almost
surely.

\subsection{Approximation}

The first criterion reduces to clopen approximation for the countably additive
component, together with one adjustment near the limit path $z$ that gives the
approximating clopen event the correct ultrafilter value.

\begin{proposition}\label{prop:approximation}
The probability function $P$ has the approximation property.
\qedthm \end{proposition}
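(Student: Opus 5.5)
The plan is to pass to the companion measure and then correct the approximating clopen set near $z$. Fix $A\in\Borel$ and $\eps>0$, and form $A^{\sharp}$ as in \eqref{eq:Asharp}. Since $R$ is countably additive, \cref{lem:clopen-regular} supplies a clopen $C_0$ with $R(A^{\sharp}\symdiff C_0)<\eps/2$. The difficulty is that $P$ and $R$ agree only on clopen events, by \eqref{eq:clopen-agreement}, while $A\symdiff C_0$ is merely Borel. So we cannot transfer the bound directly. Instead we will rewrite $P(A\symdiff C)$ in terms of $R$ using \cref{lem:companion}, whose identity \eqref{eq:companion-intersection} only intersects $A$ with clopen sets.

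\emph{Step 1: an exact formula.} For clopen $C$, finite additivity gives
\[
  P(A\symdiff C)=P(A\cap C^c)+P(C)-P(A\cap C),
\]
since $A\cap C\subseteq C$. Each term involves $A$ intersected with a clopen set, or a clopen set alone. Applying \eqref{eq:companion-intersection} term by term, and using that $R$ is additive, yields
\[
  P(A\symdiff C)=R(A^{\sharp}\cap C^c)+R(C)-R(A^{\sharp}\cap C)=R(A^{\sharp}\symdiff C).
\]
This identity holds for every clopen $C$.

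\emph{Step 2: conclusion.} Taking $C=C_0$ gives $P(A\symdiff C_0)=R(A^{\sharp}\symdiff C_0)<\eps$, which is what the approximation property requires.

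In this route the "adjustment near $z$" mentioned before the statement is absorbed into $A^{\sharp}$. That adjustment makes membership of $z$ match the ultrafilter value, and the approximating clopen set then inherits the correct value automatically: if $R(A^{\sharp}\symdiff C_0)<\alpha$, then $z\in C_0$ exactly when $z\in A^{\sharp}$. The only point needing care, and the one I regard as the main obstacle, is Step 1: the identity must be derived purely from finite additivity of $P$ together with \eqref{eq:companion-intersection}, without ever evaluating $P$ on a non-clopen set other than one of the form $A\cap C$. Once that is checked, the proposition follows.
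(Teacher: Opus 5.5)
Your proof is correct, and it takes a different route from the paper's.

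\textbf{Your route.} You approximate $A^{\sharp}$ under the companion $R$. You then transfer the bound through the exact identity $P(A\symdiff C)=R(A^{\sharp}\symdiff C)$ for clopen $C$. That identity does follow from finite additivity together with \eqref{eq:companion-intersection}, applied to $C$ and to the clopen $C^c$. Since \cref{lem:companion} comes before the proposition, there is no circularity.

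\textbf{The paper's route.} The paper does not use the companion here. It approximates $A$ under $R_0$ alone, by a clopen $C_0$. It then fixes the ultrafilter value by hand, adding or removing a zero-cylinder $Z_n$ with $R_0(Z_n)=2^{-n}<\rho$ so that $u(C)=u(A)$. \cref{lem:u-properties}(b) then gives $u(A\symdiff C)=0$, hence $P(A\symdiff C)=(1-\alpha)R_0(A\symdiff C)<2(1-\alpha)\rho$.

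\textbf{Comparison.} The paper's version shows the mechanism explicitly: the two-valued part must be matched separately, and the price is a small tail near $z$. It needs only regularity of $R_0$ and elementary properties of $u$. Yours is shorter and proves more. For every clopen $C$, the $P$-distance from $A$ equals the $R$-distance from $A^{\sharp}$, so the approximation property of $P$ reduces exactly to clopen regularity of $R$ (\cref{lem:clopen-regular}). The adjustment near $z$ is absorbed by the atom of mass $\alpha$ that $R$ places at $z$. Your closing remark about when $z\in C_0$ is correct but not needed, since the identity is exact.

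Both arguments carry over unchanged to any mixture $\alpha u_{\Vcal}+(1-\alpha)\mu_0$ with $\mu_0(\{z\})=0$. That is the generality \cref{thm:positive} later needs.
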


\begin{proof}
Fix $A\in\Borel$ and $\eps>0$, and put $b=u(A)\in\{0,1\}$.  Choose:
\[
  0<\rho<\frac{\eps}{2(1-\alpha)}.
\]
By \cref{lem:clopen-regular}, there is a clopen $C_0$ with
$R_0(A\symdiff C_0)<\rho$.  This already controls the countably additive part;
we modify $C_0$ near the limit path $z$ so that it also has the correct
ultrafilter value.

The zero-cylinders $Z_n=[0^n]$ of \eqref{eq:clopenzeropaths} satisfy, by
\eqref{eq:shell-partition} and \eqref{eq:R0}:
\begin{equation}\label{eq:R0Zn}
  R_0(Z_n)=\sum_{k>n}t_k=2^{-n}.
\end{equation}
Choose $n$ so that $R_0(Z_n)<\rho$.  If $b=1$, set $C=C_0\cup Z_n$; if
$b=0$, set $C=C_0\setminus Z_n$.  In the first case $C$ contains all $d_k$ with
$k>n$, and in the second it contains none of them.  Hence $u(C)=b=u(A)$, and
Lemma~\ref{lem:u-properties}(b) gives $u(A\symdiff C)=0$.  Moreover:
\[
  R_0(A\symdiff C)
  \le R_0(A\symdiff C_0)+R_0(Z_n)<2\rho.
\]
Therefore:
\[
  P(A\symdiff C)<2(1-\alpha)\rho<\eps.
\]
\end{proof}

\subsection{Conditional Hitting Times}

The construction was designed so that finite-stage hitting events are easy: they
are clopen by \cref{lem:finite-info}, and $P$ agrees with the countably additive
measure $R$ on every clopen set.  The only issue is the infinite hitting union.  We show that its ultrafilter
value is $1$ exactly when it contains the limit point $z$.

\begin{proposition}\label{prop:cht}
The probability function $P$ has the CHT property.
\qedthm \end{proposition}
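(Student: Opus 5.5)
The plan is to reduce CHT to a statement about the countably additive companion $R$, using the equivalent form \eqref{eq:cht-continuity}. Fix $A\in\Borel$ and $\delta\in(0,1)$, and write $O=O(A,\delta)$ and $O_N=O_N(A,\delta)$. Each $O_N$ is clopen, so \eqref{eq:clopen-agreement} gives $P(O_N)=R(O_N)$. Since $R$ is countably additive and $O_N\uparrow O$, we have $R(O_N)\to R(O)$. It therefore suffices to show $P(O)=R(O)$. Comparing \eqref{eq:P} with \eqref{eq:R-companion}, this reduces to the single identity
\[
  u(O)\;=\;\ind_O(z).
\]

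\emph{Case $z\in O$.} The set $O$ is open, so some zero-cylinder $Z_n\subseteq O$. That cylinder contains $d_k$ for every $k>n$. Hence $\{k:d_k\in O\}$ is cofinite, so it lies in $\Ucal$, and $u(O)=1$.

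\emph{Case $z\notin O$.} Here I must show that $\{k:d_k\in O\}\notin\Ucal$, and this is the main step. First I show that $u(A)=0$. Suppose instead $u(A)=1$. Then $z\in A^{\sharp}$, and by \cref{lem:companion},
\[
  P_n(A)(z)=R\bigl(A^{\sharp}\given Z_n\bigr)\;\ge\;\frac{\alpha}{\alpha+(1-\alpha)R_0(Z_n)}.
\]
By \eqref{eq:R0Zn} this tends to $1$, so it eventually exceeds $\delta$. That would put $z\in O$, a contradiction.

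So $u(A)=0$, which means $\{k:d_k\in A\}\notin\Ucal$. Now take $k$ with $d_k\notin A$ and ask when $d_k\in O$.

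For $n<k$ we have $[d_k\restrict n]=Z_n=[z\restrict n]$, so $P_n(A)(d_k)=P_n(A)(z)\le\delta$, because $z$ never hits.

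For $n\ge k$, \cref{cor:shell} gives $P_n(A)(d_k)\le\eta_k$.

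Hence $d_k\in O$ forces $\eta_k>\delta$, and so
\[
  \{k:d_k\in O\}\;\subseteq\;\{k:d_k\in A\}\,\cup\,\{k:\eta_k>\delta\}.
\]
The first set on the right is outside $\Ucal$, as just shown. The second is outside $\Ucal$ because its complement $\{k:\eta_k\le\delta\}$ lies in $\Ucal$ by \cref{lem:diagonal-ultrafilter}(a). An ultrafilter contains neither of two sets only if it omits their union, so the union is not $\Ucal$-large, and by upward closure neither is $\{k:d_k\in O\}$. Therefore $u(O)=0=\ind_O(z)$.

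I expect the delicate point to be exactly this second case. The argument must exclude hits at the $d_k$ both before time $k$, which is handled by agreement with $z$, and after time $k$, which is handled by the shell estimate, and it must first rule out $u(A)=1$. This is where part~(a) of \cref{lem:diagonal-ultrafilter} enters: only at the fixed threshold $\delta$, consistent with CHT needing no diagonalization.
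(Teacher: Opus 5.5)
Your proof is correct and follows the paper's argument essentially step for step. You reduce CHT, via clopen agreement and countable additivity of $R$, to the identity $u(O)=\ind_O(z)$; you settle the case $z\in O$ with a zero-cylinder; and when $z\notin O$ you obtain the inclusion $\{k:d_k\in O\}\subseteq\{k:d_k\in A\}\cup\{k:\eta_k>\delta\}$ from agreement with $z$ before time $k$ and the shell estimate from time $k$ on. The only cosmetic difference is how you rule out $u(A)=1$: you use the companion lower bound $\alpha/(\alpha+(1-\alpha)R_0(Z_n))$, whereas the paper shows directly that $P(A\given Z_n)\to u(A)$.
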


\begin{proof}
Fix $A\in\Borel$ and $\delta\in(0,1)$, and abbreviate
$C_n=C_n(A,\delta)$,
$O_N=\bigcup_{n\le N}C_n$, and $O=\bigcup_{n\ge1}C_n$.  Every $O_N$ is
clopen, so Proposition~\ref{prop:basic-properties} and countable additivity of $R$
give:
\begin{equation}\label{eq:finite-hit-limit}
  \lim_{N\to\infty}P(O_N)
   =\lim_{N\to\infty}R(O_N)=R(O).
\end{equation}
Since $P-R=\alpha(u-\delta_z)$, it remains to prove:
\begin{equation}\label{eq:open-agreement}
  u(O)\quad=\quad\ind_O(z).
\end{equation}

\smallskip
\noindent\emph{Case 1: $z\in O$.}
Openness gives some $m$ with $Z_m\subseteq O$, where $Z_m=[0^m]$ is as in
\eqref{eq:clopenzeropaths}.  Since $d_k\in Z_m$ for every $k>m$, the index set
$\{k:d_k\in O\}$ is cofinite, so $u(O)=1=\ind_O(z)$.

\smallskip
\noindent\emph{Case 2: $z\notin O$.}
Then
\begin{equation}\label{eq:z-no-hit}
  P(A\given Z_n)\le\delta \qquad(n\in\N).
\end{equation}
Because $\{k:d_k\in Z_n\}=\{k:k>n\}$ is cofinite, $u(Z_n)=1$, and Lemma~\ref{lem:u-properties}(a) gives
$u(A\cap Z_n)=u(A)$.  Also
$R_0(A\cap Z_n)\le R_0(Z_n)\to0$.  Hence
\[
 P(A\given Z_n)
 =\frac{\alpha u(A)+(1-\alpha)R_0(A\cap Z_n)}
        {\alpha+(1-\alpha)R_0(Z_n)}
 \longrightarrow u(A).
\]
Since $u(A)$ is either $0$ or $1$ and $\delta<1$, \eqref{eq:z-no-hit} implies
$u(A)=0$.  Hence
\begin{equation}\label{eq:IA-small}
  I_A:=\set{k:d_k\in A}\notin\Ucal.
\end{equation}

Fix $k\notin I_A$.  If $n<k$, then
$[d_k\restrict n]=Z_n$, so \eqref{eq:z-no-hit} shows that $d_k\notin C_n$.
If $n\ge k$, then $d_k\notin A$ and \eqref{eq:shell-estimate} give
$P(A\given[d_k\restrict n])\le\eta_k$, so $d_k\notin C_n$ whenever
$\eta_k\le\delta$.  It follows that
\begin{equation}\label{eq:hit-index-inclusion}
  \set{k:d_k\in O}
  \subseteq I_A\cup\set{k:\eta_k>\delta}.
\end{equation}
The first set on the right is not in $\Ucal$ by \eqref{eq:IA-small}; the second is
not in $\Ucal$ by Lemma~\ref{lem:diagonal-ultrafilter}(a).  A finite union of sets
outside an ultrafilter is again outside it.  Thus $u(O)=0$, proving
\eqref{eq:open-agreement}.

Combining \eqref{eq:finite-hit-limit} and \eqref{eq:open-agreement} gives
$P(O)=\lim_NP(O_N)$, which is equivalent to CHT by
\eqref{eq:cht-continuity}.
\end{proof}

The atom at each distinguished path was what made \eqref{eq:shell-estimate}
available, and with it the whole of Case~2.  It cannot be dispensed with.  The
next proposition holds the geometry of \cref{sec:geometry} and the ultrafilter
component fixed and lets the countably additive component vary: if that
component gives no mass to the distinguished paths, on a large set of indices,
then CHT fails outright.  The event that witnesses the failure is the one that
will witness the failure of almost-uniform convergence as well:
\begin{equation}\label{eq:H}
  H\quad=\quad\Omega\setminus\bigl(D\cup\{z\}\bigr),
\end{equation}
where $D$ is the set of distinguished paths \eqref{eq:Dset}.  That set is
countable and every singleton is closed, so $H$ is Borel.

\begin{proposition}[No atoms, no CHT]\label{prop:noatom}
Let $\Vcal$ be a free ultrafilter on $\N$, let $u_{\Vcal}$ be defined from
$\Vcal$ by \eqref{eq:u}, let $\mu_0$ be a countably additive Borel probability
on $\Omega$ with $\mu_0(\{z\})=0$, let $\alpha\in(0,1)$, and suppose that
$P'=\alpha u_{\Vcal}+(1-\alpha)\mu_0$ satisfies \eqref{eq:posi}.  If:
\begin{equation}\label{eq:noatoms}
  J_0\;=\;\bigl\{\,k\in\N:\mu_0(\{d_k\})=0\,\bigr\}\;\in\;\Vcal,
\end{equation}
then $P'$ does not have the CHT property.  Indeed, for $H$ as in
\eqref{eq:H} and every $\delta\in(1-\alpha,1)$,
\[
  P'\bigl(O(H,\delta)\bigr)-\lim_{N\to\infty}
  P'\bigl(O_N(H,\delta)\bigr)\;=\;\alpha .
\]
\qedthm \end{proposition}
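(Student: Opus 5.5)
The plan is to compute both sides through the companion measure of \cref{lem:companion}, with $R'=\alpha\delta_z+(1-\alpha)\mu_0$. Since each $O_N=O_N(H,\delta)$ is clopen, $P'(O_N)=R'(O_N)\to R'(O)$ by countable additivity of $R'$, where $O=O(H,\delta)$. As $P'-R'=\alpha(u_{\Vcal}-\delta_z)$, the gap $P'(O)-\lim_NP'(O_N)$ equals $\alpha\bigl(u_{\Vcal}(O)-\ind_O(z)\bigr)$. It therefore suffices to show that $u_{\Vcal}(O)=1$ and $z\notin O$.

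For $z\notin O$, we first note that $u_{\Vcal}(H)=0$, because $H$ contains no $d_k$. Hence $H^{\sharp}=H\setminus\{z\}=H$. By \eqref{eq:posterior-companion}, $P'(H\given Z_n)=R'(H\given Z_n)$. Now $R'(H\cap Z_n)\le(1-\alpha)\mu_0(Z_n)$, and $R'(Z_n)\ge\alpha$, so
\[
  P'(H\given Z_n)\;\le\;\frac{(1-\alpha)\mu_0(Z_n)}{\alpha+(1-\alpha)\mu_0(Z_n)}.
\]
This bound tends to $0$, since $\mu_0(Z_n)\downarrow\mu_0(\{z\})=0$. It is not immediately at most $\delta$ for every $n$, so I need a cruder uniform bound. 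The quantity $x/(\alpha+x)$ with $x=(1-\alpha)\mu_0(Z_n)\le1-\alpha$ is at most $(1-\alpha)/(\alpha+1-\alpha)=1-\alpha<\delta$. So $z\notin C_n(H,\delta)$ for every $n$, and hence $z\notin O$. This is exactly where the hypothesis $\delta>1-\alpha$ enters.

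For $u_{\Vcal}(O)=1$, I show that $d_k\in O$ for every $k\in J_0$ (and $J_0\in\Vcal$ by \eqref{eq:noatoms}). Fix $k\in J_0$ and consider $n\ge k$. The cylinder $[d_k\restrict n]$ contains only the distinguished path $d_k$, so $u_{\Vcal}$ vanishes on it by freeness, and $z\notin[d_k\restrict n]$. Thus $R'(H\cap[d_k\restrict n])=(1-\alpha)\mu_0\bigl([d_k\restrict n]\setminus\{d_k\}\bigr)=(1-\alpha)\mu_0([d_k\restrict n])$, where the last equality uses $\mu_0(\{d_k\})=0$. Also $R'([d_k\restrict n])=(1-\alpha)\mu_0([d_k\restrict n])$, which is positive by \eqref{eq:posi} together with the agreement $P'=R'$ on clopen sets. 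So $P'_n(H)(d_k)=1>\delta$, and therefore $d_k\in C_n(H,\delta)\subseteq O$. Since the set of indices $k$ with $d_k\in O$ contains $J_0\in\Vcal$, we get $u_{\Vcal}(O)=1$.

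Combining the two facts gives a gap of exactly $\alpha$. Since CHT is equivalent to \eqref{eq:cht-continuity}, this shows that CHT fails. The main obstacle is the uniform bound at $z$. The step at $d_k$ is the easy one, because an atomless cylinder that avoids $z$ and the other distinguished paths gives posterior exactly $1$.
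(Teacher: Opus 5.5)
Your proof is correct and follows the paper's argument essentially step for step. It uses the same uniform bound $P'_n(H)(z)\le 1-\alpha<\delta$ at the limit path, the same computation that the posterior of $H$ at an unatomized $d_k$ equals $1$ for $n\ge k$, and the same passage to the limit through the countably additive companion $R'$. The only difference is cosmetic: you evaluate the posteriors as $R'(H^\sharp\given\cdot)$ with $H^\sharp=H$, whereas the paper computes them directly from $P'$ using $u_{\Vcal}(H\cap Z_n)=0$, $u_{\Vcal}(Z_n)=1$, and $u_{\Vcal}([d_k\restrict n])=0$.
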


\begin{proof}
Fix $\delta\in(1-\alpha,1)$ and abbreviate $O=O(H,\delta)$,
$O_N=O_N(H,\delta)$.

\smallskip
\noindent\emph{Step 1: the limit path never crosses.}
No distinguished path lies in $H$, so $u_{\Vcal}(H)=0$ and hence
$u_{\Vcal}(H\cap Z_n)=0$ by monotonicity; and $\{k:d_k\in Z_n\}=\{k:k>n\}$ is
cofinite, so $u_{\Vcal}(Z_n)=1$.  Since $[z\restrict n]=Z_n$,
\[
  P'_n(H)(z)
  =\frac{(1-\alpha)\mu_0(H\cap Z_n)}{\alpha+(1-\alpha)\mu_0(Z_n)}
  \;\le\;\frac{(1-\alpha)\mu_0(Z_n)}{\alpha+(1-\alpha)\mu_0(Z_n)}
  \;\le\;1-\alpha\;<\;\delta ,
\]
the second inequality because $t\mapsto(1-\alpha)t/(\alpha+(1-\alpha)t)$ is
nondecreasing on $[0,1]$ with value $1-\alpha$ at $t=1$.  So
$z\notin C_n(H,\delta)$ for every $n$, that is $z\notin O$.

\smallskip
\noindent\emph{Step 2: an unatomized distinguished path crosses at time $k$.}
Let $k\in J_0$ and $n\ge k$.  Then $[d_k\restrict n]\subseteq S_k$, and since
$z\notin S_k$ while $d_j\in S_j$ with the shells pairwise disjoint, we have
$[d_k\restrict n]\cap(D\cup\{z\})=\{d_k\}$ and therefore
$H\cap[d_k\restrict n]=[d_k\restrict n]\setminus\{d_k\}$.  Also
$\{j:d_j\in[d_k\restrict n]\}=\{k\}$ is finite, so
$u_{\Vcal}([d_k\restrict n])=0$ and hence
$P'([d_k\restrict n])=(1-\alpha)\mu_0([d_k\restrict n])$, which is positive by
\eqref{eq:posi}.  Consequently
\[
  P'_n(H)(d_k)
  =\frac{\mu_0\bigl([d_k\restrict n]\setminus\{d_k\}\bigr)}
        {\mu_0\bigl([d_k\restrict n]\bigr)}
  =1-\frac{\mu_0(\{d_k\})}{\mu_0\bigl([d_k\restrict n]\bigr)}
  =1\;>\;\delta ,
\]
because $\mu_0(\{d_k\})=0$.  In particular $d_k\in C_k(H,\delta)\subseteq O$.

\smallskip
\noindent\emph{Step 3: conclusion.}
By Step~2, $\{k:d_k\in O\}\supseteq J_0$, which lies in $\Vcal$; so
$u_{\Vcal}(O)=1$ and
$P'(O)=\alpha+(1-\alpha)\mu_0(O)$.  Each $O_N$ is clopen, so
$P'(O_N)=R'(O_N)$ by \cref{lem:companion}, where
$R'=\alpha\delta_z+(1-\alpha)\mu_0$; the events $O_N$ increase to $O$ and $R'$
is countably additive, so $\lim_NP'(O_N)=R'(O)$, and $R'(O)=(1-\alpha)\mu_0(O)$
because $z\notin O$ by Step~1.  Subtracting gives the displayed difference
$\alpha>0$, and CHT is equivalent to that difference vanishing by
\eqref{eq:cht-continuity}.

\smallskip
For $k\in J_0$ the crossing time is exactly $k$: for $n<k$ one has
$[d_k\restrict n]=Z_n=[z\restrict n]$, so Step~1 gives
$P'_n(H)(d_k)\le1-\alpha<\delta$.  The crossing time therefore moves with the
index, which is the mechanism \cref{rem:threshold-diagonal} describes and the
one the atom of \eqref{eq:R0} prevents.
\end{proof}

One family covered by \cref{prop:noatom} is the one instantiated by the example
\citet[proof of Theorem~5, p.~411]{nielsen2021convergence} gives.

\begin{corollary}\label{cor:nielsen-thm5}
Let $Q$ be the fair-coin measure \eqref{eq:faircoin}, let $\Vcal$ be a free
ultrafilter on $\N$, let $u_{\Vcal}$ be defined from $\Vcal$ by \eqref{eq:u},
and for $\alpha\in(0,1)$ put $P_\alpha=\alpha u_{\Vcal}+(1-\alpha)Q$.  Then
$P_\alpha$ satisfies \eqref{eq:posi} and does not have the CHT property, and
distinct values of $\alpha$ give distinct probability functions.  In particular
there are uncountably many such $P_\alpha$.
\qedthm \end{corollary}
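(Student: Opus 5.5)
The plan is to read \cref{cor:nielsen-thm5} off \cref{prop:noatom}, taking $\mu_0=Q$. Three hypotheses of that proposition must be checked. First, $Q(\{z\})=0$, which holds because $Q$ is diffuse: $Q(\{z\})\le Q(Z_n)=2^{-n}$ for every $n$. Second, $J_0=\{k:Q(\{d_k\})=0\}$ is all of $\N$, again by diffuseness, so $J_0\in\Vcal$ trivially. Third, $P_\alpha$ must satisfy \eqref{eq:posi}. For this, note that $u_{\Vcal}\ge0$, so every cylinder $[s]$ gets
\[
P_\alpha([s])\ \ge\ (1-\alpha)Q([s])\ =\ (1-\alpha)2^{-\abs{s}}\ >\ 0
\]
by \eqref{eq:faircoin}. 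With these in hand, \cref{prop:noatom} applies with $\delta\in(1-\alpha,1)$ and the event $H$ of \eqref{eq:H}. It shows that the CHT identity \eqref{eq:cht-continuity} fails by exactly $\alpha$, so $P_\alpha$ lacks CHT.

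For distinctness, I will evaluate $P_\alpha$ on the countable Borel set $D$ of \eqref{eq:Dset}. We have $u_{\Vcal}(D)=1$, since $\{k:d_k\in D\}=\N\in\Vcal$, and $Q(D)=0$ because $D$ is countable and $Q$ is diffuse. Hence $P_\alpha(D)=\alpha$, so $\alpha\mapsto P_\alpha$ is injective on $(0,1)$. Since $(0,1)$ is uncountable, this yields uncountably many such $P_\alpha$.

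I do not expect a real obstacle here, because every substantive estimate is already inside \cref{prop:noatom}. The only care needed is to separate the two roles of diffuseness. It gives $\mu_0(\{z\})=0$, which is a standing hypothesis of \cref{lem:companion} and \cref{prop:noatom}. Separately, it gives $J_0=\N$, which is the no-atom condition \eqref{eq:noatoms}.
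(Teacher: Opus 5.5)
Your proposal is correct and follows the paper's own proof essentially step for step. Like the paper, you check positivity via $(1-\alpha)Q$, use diffuseness of $Q$ to get $Q(\{z\})=0$ and $J_0=\N\in\Vcal$ so that \cref{prop:noatom} applies, and obtain distinctness from $P_\alpha(D)=\alpha$.
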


\begin{proof}
Positivity: $P_\alpha([\omega\restrict n])\ge(1-\alpha)Q([\omega\restrict n])
=(1-\alpha)2^{-n}>0$ by \eqref{eq:faircoin}.  The measure $Q$ is diffuse, so
$Q(\{z\})=0$ and $Q(\{d_k\})=0$ for every $k$; hence $J_0=\N\in\Vcal$ in
\eqref{eq:noatoms} and \cref{prop:noatom} applies.  For distinctness, $D$ is
countable, so $Q(D)=0$ by diffuseness, while $u_{\Vcal}(D)=1$; hence
$P_\alpha(D)=\alpha$.
\end{proof}

\begin{remark}[The status of Nielsen's Theorem~5]\label{rem:nielsen-thm5}
Theorem~5 \citep[p.~411]{nielsen2021convergence} makes two claims: that there
are uncountably many probability functions satisfying \eqref{eq:posi} with the
approximation property that do not converge to the truth almost surely, and
that they therefore do not have the CHT property.  The second is inferred from
the first ``by Theorem~2 and Lemma~3''
\citep[p.~411]{nielsen2021convergence} --- that is, from the sufficiency
direction of \eqref{eq:characterization}, since it is only that direction which
yields, for a probability function with the approximation property, that CHT
implies almost-sure convergence.  \cref{thm:counterexample} removes that
direction, so the published derivation of the second claim lapses.
\cref{cor:nielsen-thm5} restores it for the probability functions his own
example yields, which suffices for a claim of existential form, and does so
without appeal to Theorem~2; his first claim is his own and we do not revisit
it.  The same inference is what supports his statement
that the approximation property does not imply the CHT property, one direction
of the independence of the two criteria; that direction is therefore restored as
well, while the other rests on Theorem~4, whose proof
\cref{sec:published-proof} shows to be unavailable.

The construction is his.  His proof takes $D$ to be any infinite discrete
subspace of $\Omega$ with $Q(D\cup L_D)=0$, where $L_D$ is the set of limit
points of $D$, and offers as its example the set of paths $\omega_n$ with
$\omega_n(i)=\ind_{\{n\}}(i)$ for $i\in\N$ --- the distinguished paths $d_n$ of
\eqref{eq:shellsandpaths} --- whose only limit point is the constantly zero
path, our $z$.  For that $D$ he takes a nonprincipal ultrafilter $\Wcal$ on
$\Borel$ with
$D\in\Wcal$, meaning that $\Wcal$ is a filter on $\Borel$ containing exactly one
of $A$ and $\Omega\setminus A$ for each $A\in\Borel$ and containing no
singleton; the two-valued probability $F\mapsto\ind_{\Wcal}(F)$; and the mixture
$\alpha\,\ind_{\Wcal}+(1-\alpha)Q$.  Given such a $\Wcal$, put:
\[
  \Vcal\quad=\quad\bigl\{\,S\subseteq\N:\{d_k:k\in S\}\in\Wcal\,\bigr\}.
\]
Then $\Vcal$ is a free ultrafilter on $\N$, and
$\ind_{\Wcal}=u_{\Vcal}$ on $\Borel$, so his mixture is the $P_\alpha$ of
\cref{cor:nielsen-thm5}.

For the first assertion, the map $S\mapsto\{d_k:k\in S\}$ carries subsets of
$\N$ to subsets of $D$ preserving inclusions and intersections, so $\Vcal$ is
closed upward and under intersections; $\N\in\Vcal$ because $D\in\Wcal$, and
$\varnothing\notin\Vcal$ because $\varnothing\notin\Wcal$.  For each $S$ the
sets $\{d_k:k\in S\}$ and $\{d_k:k\notin S\}$ are disjoint with union $D$, and
$D\in\Wcal$, so exactly one of them lies in $\Wcal$: whichever of
$\{d_k:k\in S\}$ and its complement in $\Omega$ lies in $\Wcal$, intersecting
with $D$ leaves a member of $\Wcal$; and not both of them, since a filter
contains no two disjoint sets.  Hence exactly one of $S$ and
$\N\setminus S$ lies in $\Vcal$.  And if a finite $S$ had
$\{d_k:k\in S\}\in\Wcal$, then splitting that finite set repeatedly and keeping
at each step the half that lies in $\Wcal$ would put a singleton in $\Wcal$,
contradicting nonprincipality; so $\Vcal$ contains no finite set.  For the
second assertion, $A\in\Wcal$ if and only if $A\cap D\in\Wcal$, since $\Wcal$
is closed upward and under intersection with $D\in\Wcal$; and
$A\cap D=\{d_k:k\in\{k:d_k\in A\}\}$, so
$\ind_{\Wcal}(A)=1$ exactly when $\{k:d_k\in A\}\in\Vcal$, which by
\eqref{eq:u} is exactly when $u_{\Vcal}(A)=1$.
\qedthm\end{remark}

\subsection{Failure of Almost-Uniform Convergence}

The event witnessing failure of almost-uniform convergence has a simple
interpretation.  It removes the limit point $z$ and every distinguished point
$d_k$, but retains the diffuse part of each shell.  At $d_k$, its time-$k$
posterior is then exactly the diffuse fraction $\eta_k$.

\begin{proposition}\label{prop:not-au}
There is a Borel event $H$ for which $P_n(H)$ does not converge to $\ind_H$ almost
uniformly.
\qedthm \end{proposition}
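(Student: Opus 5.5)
We take $H=\Omega\setminus(D\cup\{z\})$ from \eqref{eq:H} and argue by contradiction. Suppose $P_n(H)\to\ind_H$ almost uniformly. Fix $\eps<\alpha$ and an event $E$ with $P(E)<\eps$ on whose complement the convergence is uniform. The idea is to show that $E$ must nonetheless contain $d_k$ for a $\Ucal$-large set of $k$. Then $u(E)=1$, which forces $P(E)\ge\alpha>\eps$ and gives the contradiction.

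\textbf{Step 1: the error at $d_k$ at time $k$ is exactly $\eta_k$.} Since $d_k\notin H$, we have $\ind_H(d_k)=0$. The cylinder $[d_k\restrict k]$ is the shell $S_k$, and $u(S_k)=0$ because only $d_k$ lies in it. So
\[
P_k(H)(d_k)=\frac{R_0(H\cap S_k)}{R_0(S_k)}.
\]
By the diffuseness in \cref{lem:nu}(b), $R_0(H\cap S_k)=t_k\eta_k$. Since $R_0(S_k)=t_k$, we get $P_k(H)(d_k)=\eta_k$. This is the computation \eqref{eq:error-eta} that the paper refers to.

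\textbf{Step 2: extract a set on which $\eta_k\to0$.} Let $K=\{k:d_k\notin E\}$. Given $r>0$, uniform convergence off $E$ supplies $N$ such that $|P_n(H)(\omega)-\ind_H(\omega)|<r$ for all $n\ge N$ and $\omega\notin E$. Applying this with $n=k\ge N$ and $\omega=d_k$ for $k\in K$, Step 1 gives $\eta_k<r$. So $\eta_k\to0$ along $K$. By \cref{lem:diagonal-ultrafilter}(b), $K\notin\Ucal$.

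\textbf{Step 3: conclude.} Since $K\notin\Ucal$, its complement $\{k:d_k\in E\}$ lies in $\Ucal$, so $u(E)=1$. Then $P(E)\ge\alpha u(E)=\alpha>\eps$, which contradicts the choice of $E$. Hence almost-uniform convergence fails for $H$.

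\textbf{Main obstacle.} The only delicate point is Step 1: checking that the $u$-component contributes nothing to the cylinder $S_k$, and that the atom at $d_k$ is excluded from $H$ while the diffuse part of the shell is retained. Everything else is a direct unpacking of the quantifier order in \cref{def:au-general} against \cref{lem:diagonal-ultrafilter}(b). This uses the fact that $E$ is fixed before $r$, which is exactly what the published sufficiency proof could not supply.
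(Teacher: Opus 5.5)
Your proposal is correct and is essentially the paper's proof: it uses the same event $H$, the same computation $P_k(H)(d_k)=\eta_k$ at time $n=k$, and the same appeal to \cref{lem:diagonal-ultrafilter}(b) against the index set $K=\{k:d_k\notin E\}$. The only difference is the order of deduction: the paper first uses $P(E)<\alpha/2$ to get $u(E)=0$, hence $K\in\Ucal$, and then derives $\eta_k\to0$ along $K$; you derive $\eta_k\to0$ along $K$ first, then conclude $u(E)=1$, which contradicts $P(E)<\eps<\alpha$.
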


\begin{proof}
Let $H$ be the event \eqref{eq:H}, so that $\ind_H(d_k)=0$.  At time $n=k$, the
information cylinder at $d_k$ is the shell $S_k=[0^{k-1}1]$ of
\eqref{eq:shellsandpaths}.  Since $u(S_k)=0$, $\nu_k(H)=1$, and
$R_0(S_k)=t_k$, we have
\begin{equation}\label{eq:error-eta}
  P_k(H)(d_k)=P(H\given S_k)=\eta_k,
\end{equation}
where the subscript $k$ on $P_k$ denotes observation time.

Suppose that $P_n(H)\to\ind_H$ almost uniformly.  In
Definition~\ref{def:convergence}, take the tolerance for the exceptional event to be
$\alpha/2$.  There is then an event $E$ with $P(E)<\alpha/2$ such that
convergence is uniform on $E^c$.  This inequality forces $u(E)=0$: if $u(E)=1$,
then $P(E)\ge\alpha$.  Hence
\begin{equation}\label{eq:Klarge}
  K=\set{k:d_k\notin E}\in\Ucal.
\end{equation}
Uniform convergence on $E^c$, evaluated at $\omega=d_k$ and $n=k$, implies from
\eqref{eq:error-eta} that $\eta_k\to0$ along $K$.  This contradicts
Lemma~\ref{lem:diagonal-ultrafilter}(b).
\end{proof}

\subsection{Almost-Sure Convergence}

Almost-sure convergence survives, and not by accident.  \cref{prop:noatom}
showed that CHT forces the countably additive component to place an atom on the
distinguished paths, on a large set of indices.  Those atoms have a price: they
protect the distinguished paths from the exceptional event of L\'evy's theorem,
and that is enough to deliver almost-sure convergence.  Within the architecture
of \cref{sec:construction}, therefore, CHT implies almost-sure convergence to
the truth --- a statement of the shape of Nielsen's Corollary~1, and one that
does not use the approximation property.

\begin{proposition}[CHT forces almost-sure convergence]\label{prop:cht-as}
Let $\Vcal$ be a free ultrafilter on $\N$, let $u_{\Vcal}$ be defined from
$\Vcal$ by \eqref{eq:u}, let $\mu_0$ be a countably additive Borel probability
on $\Omega$ with $\mu_0(\{z\})=0$, let $\alpha\in(0,1)$, and suppose that
$P'=\alpha u_{\Vcal}+(1-\alpha)\mu_0$ satisfies \eqref{eq:posi}.  If $P'$ has
the CHT property, then the posteriors of $P'$ converge to the truth almost
surely for every Borel event.
\qedthm \end{proposition}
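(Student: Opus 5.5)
The plan is to reduce everything to the countably additive companion. By \cref{lem:companion}, for every $A\in\Borel$, every $\omega$ and every $n$ we have $P'_n(A)(\omega)=R'(A^{\sharp}\given[\omega\restrict n])$, where $R'=\alpha\delta_z+(1-\alpha)\mu_0$. Moreover $R'$ agrees with $P'$ on clopen sets, so $R'$ gives every nonempty cylinder positive probability by \eqref{eq:posi}. Hence \cref{lem:standard-facts}(a), applied to $\mu=R'$ and $B=A^{\sharp}$, yields a Borel set $N=N_A$ with $R'(N)=0$ off which $P'_n(A)\to\ind_{A^{\sharp}}$.

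Since $A\symdiff A^{\sharp}\subseteq\{z\}$, the posteriors converge to $\ind_A$ at every point of $\Omega\setminus(N\cup\{z\})$. It therefore suffices to show $P'(N\cup\{z\})=0$. I will handle the two components of $P'$ separately.

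First, the countably additive part vanishes. From $R'(N)=0$ we get $(1-\alpha)\mu_0(N)=0$ and $\alpha\,\ind_N(z)=0$, so $\mu_0(N)=0$ and $z\notin N$. Together with $\mu_0(\{z\})=0$, this gives $\mu_0(N\cup\{z\})=0$.

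Second, for the ultrafilter part, I need $u_{\Vcal}(N\cup\{z\})=0$, that is, $\{k:d_k\in N\}\notin\Vcal$, since $z$ is not a distinguished path. This is where CHT enters, through the contrapositive of \cref{prop:noatom}. Because $P'$ has CHT, the set $J_0=\{k:\mu_0(\{d_k\})=0\}$ is not in $\Vcal$. For $k\notin J_0$ we have $R'(\{d_k\})\ge(1-\alpha)\mu_0(\{d_k\})>0$, so $d_k$ cannot belong to the $R'$-null set $N$. Hence $\{k:d_k\in N\}\subseteq J_0\notin\Vcal$, and upward closure of $\Vcal$ gives $\{k:d_k\in N\}\notin\Vcal$. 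Then $P'(N\cup\{z\})=\alpha\cdot0+(1-\alpha)\cdot0=0$, and the convergence set has $P'$-probability $1$ by finite additivity.

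The main obstacle is the step just described: the L\'evy exceptional set is only $R'$-null, and $R'$ cannot see $u_{\Vcal}$'s mass spread along the $d_k$. The atoms that CHT forces, via \cref{prop:noatom}, are exactly what keep a $\Vcal$-large set of distinguished paths out of $N$. A minor point also needs checking: the companion limit $\ind_{A^{\sharp}}(z)=u_{\Vcal}(A)$ may differ from $\ind_A(z)$. This is harmless because $u_{\Vcal}(\{z\})=0$ and $\mu_0(\{z\})=0$, so $P'(\{z\})=0$.
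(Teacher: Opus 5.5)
Your proposal is correct and follows the paper's own proof essentially step for step: the companion identity \eqref{eq:posterior-companion} plus L\'evy's theorem for $R'$, the contrapositive of \cref{prop:noatom} to get $J_0\notin\Vcal$, the atoms $R'(\{d_k\})>0$ keeping the distinguished paths outside the $R'$-null set, and $P'(\{z\})=0$ to absorb the modification at $z$. The only difference is cosmetic: you get positivity of $R'$ on cylinders directly from $P'(C)=R'(C)$ for clopen $C$, whereas the paper splits into the cases $z\in B$ and $z\notin B$.
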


\begin{proof}
By \cref{prop:noatom}, the set
$J=\{k\in\N:\mu_0(\{d_k\})>0\}$ belongs to $\Vcal$: otherwise its complement
$J_0$ would be $\Vcal$-large by the ultrafilter property, and CHT would fail.

Fix $A\in\Borel$, let $A^{\sharp}$ be as in \eqref{eq:Asharp}, and put
$R'=\alpha\delta_z+(1-\alpha)\mu_0$ as in \eqref{eq:companion-pair}.  The
measure $R'$ gives every nonempty cylinder positive probability: if $z\in B$
then $R'(B)\ge\alpha$, while if $z\notin B$ then $B\subseteq S_k$ for a unique
$k$, so $u_{\Vcal}(B)=0$ --- no $d_j$ other than $d_k$ meets $B$, and a free
ultrafilter contains no finite set --- and \eqref{eq:posi} gives
$0<P'(B)=(1-\alpha)\mu_0(B)\le R'(B)$.  So
\cref{lem:standard-facts}(a) applies to $R'$, and with
\eqref{eq:posterior-companion} it gives
\[
  P'_n(A)\;=\;R'\bigl(A^{\sharp}\given[\,\cdot\restrict n]\bigr)
  \;\longrightarrow\;\ind_{A^{\sharp}}
  \qquad R'\text{-almost surely}.
\]
Let $N_A\in\Borel$ be an $R'$-null event off which this convergence holds.  An
$R'$-null event contains no point of positive $R'$-measure, and
$R'(\{z\})=\alpha>0$ while $R'(\{d_k\})=(1-\alpha)\mu_0(\{d_k\})>0$ for
$k\in J$; hence $z\notin N_A$ and $d_k\notin N_A$ for every $k\in J$.
Therefore $\{k:d_k\in N_A\}\subseteq\N\setminus J$, which is not $\Vcal$-large,
so $u_{\Vcal}(N_A)=0$; and $(1-\alpha)\mu_0\le R'$ gives $\mu_0(N_A)=0$.  Hence
$P'(N_A)=0$.

Off $N_A$ the posteriors converge to $\ind_{A^{\sharp}}$.  Since
$A\symdiff A^{\sharp}\subseteq\{z\}$ and
$P'(\{z\})=\alpha u_{\Vcal}(\{z\})+(1-\alpha)\mu_0(\{z\})=0$ --- the first term
vanishing because no $d_k$ equals $z$, so that
$\{k:d_k\in\{z\}\}=\varnothing\notin\Vcal$, and the second by hypothesis ---
the posteriors converge to $\ind_A$ off the event
$N_A\cup\{z\}$, which has $P'$-probability zero by finite additivity.
\end{proof}

\begin{corollary}\label{prop:as}
Despite \cref{prop:not-au}, the posteriors of $P$ converge to the truth almost
surely for every Borel event.
\qedthm \end{corollary}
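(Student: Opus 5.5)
The corollary should follow by instantiating \cref{prop:cht-as}. We take $\Vcal=\Ucal$, $\mu_0=R_0$, and keep the mixing weight $\alpha$ of \eqref{eq:P}. With these choices $P'=\alpha u+(1-\alpha)R_0=P$, as recorded at the end of \cref{lem:companion}. So the plan is to check the hypotheses of \cref{prop:cht-as} one by one and then read off its conclusion.

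\emph{Free ultrafilter.} $\Ucal$ is free by \cref{lem:diagonal-ultrafilter}, which itself rests on \cref{lem:blockfilter}.

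\emph{The limit path is null for $R_0$.} We need $R_0(\{z\})=0$, and we get it from \eqref{eq:R0} term by term. Each $\delta_{d_k}$ vanishes at $z$ because $d_k\neq z$: every $d_k$ has a $1$ in position $k$. Each $\nu_k$ vanishes at $z$ by \cref{lem:nu}(b). Countable additivity of the sum then gives $R_0(\{z\})=0$. Alternatively, $z\in Z_n$ for every $n$, so \eqref{eq:R0Zn} gives $R_0(\{z\})\le 2^{-n}$ for all $n$.

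\emph{Positivity and CHT.} Positivity \eqref{eq:posi} holds by \cref{prop:basic-properties}, and the CHT property holds by \cref{prop:cht}.

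With all hypotheses verified, \cref{prop:cht-as} gives almost-sure convergence of the posteriors of $P$ to the truth for every Borel event. Read alongside \cref{prop:not-au}, this says the two modes of convergence separate for this $P$.

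There is no real obstacle here. The one point that needs an explicit check is $R_0(\{z\})=0$, because \cref{prop:cht-as} requires it and \eqref{eq:R0} does not state it directly. A direct argument is also available as a cross-check: the atoms $R_0(\{d_k\})=t_k(1-\eta_k)>0$ hold for \emph{every} $k$, so the index set $J$ in the proof of \cref{prop:cht-as} is all of $\N$. Hence the L\'evy null set for $R$ avoids every $d_k$ as well as $z$. That gives the corollary even without routing through \cref{prop:noatom}.
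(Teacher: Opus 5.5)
Your proposal is correct and follows the paper's proof: it instantiates \cref{prop:cht-as} with $\Vcal=\Ucal$ and $\mu_0=R_0$, citing \cref{prop:basic-properties} for \eqref{eq:posi} and \cref{prop:cht} for CHT. Your alternative, the direct argument with $J=\N$ from the atoms $R_0(\{d_k\})=t_k(1-\eta_k)>0$, is the same alternative the paper records. Your explicit check that $R_0(\{z\})=0$ is a welcome detail that the paper only asserts.
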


\begin{proof}
Apply \cref{prop:cht-as} with $\Vcal=\Ucal$ and $\mu_0=R_0$, which is
legitimate because $R_0(\{z\})=0$, because $P$ satisfies \eqref{eq:posi} by
\cref{prop:basic-properties}, and because $P$ has the CHT property by
\cref{prop:cht}.  Alternatively, apply the proof directly: $R_0$ has an atom at
every $d_k$ by \eqref{eq:R0}, so $J=\N$.
\end{proof}

\subsection{Refutation of the Characterization}

\begin{theorem}[Counterexample to Nielsen's Theorem~2]\label{thm:counterexample}
There exists a merely finitely additive probability function on the Borel subsets of
$\{0,1\}^{\N}$ that satisfies \eqref{eq:posi}, has the approximation property, and
has the CHT property, but whose posteriors do not converge to the truth almost
uniformly.  The probability can be chosen so that its posteriors converge to the
truth almost surely.
\qedthm \end{theorem}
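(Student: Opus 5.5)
The theorem is an assembly of results already proved for the probability function $P=\alpha u+(1-\alpha)R_0$ of \eqref{eq:P}, built from the pair $(\Ucal,(\eta_k))$ of \cref{lem:diagonal-ultrafilter}. So the proof will consist of taking this $P$ as the witness and checking each clause of the statement against the corresponding earlier result. No new estimate is needed; the work lies in confirming that every hypothesis of the cited results is met by this particular $P$.

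The order of verification is as follows.

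First, \cref{prop:basic-properties} gives three facts: $P$ is a finitely additive probability on $\Borel$, it is not countably additive (so it is merely finitely additive), and every nonempty cylinder has positive $P$-probability, which is \eqref{eq:posi}.

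Second, \cref{prop:approximation} supplies the approximation property.

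Third, \cref{prop:cht} supplies the CHT property. This is the clause I regard as the substantive one, since it is where the atoms of $R_0$ at the $d_k$ and the shell estimate \eqref{eq:shell-estimate} enter. It is already established, so here it is only cited.

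Fourth, \cref{prop:not-au} exhibits the event $H$ of \eqref{eq:H}, for which $P_n(H)$ does not converge to $\ind_H$ almost uniformly. By \cref{def:convergence}, this means $P$ does not converge to the truth almost uniformly.

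Fifth, for the final sentence of the theorem, \cref{prop:as} shows that this same $P$ has posteriors converging to the truth almost surely for every Borel event. A single witness therefore serves both sentences of the statement.

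The only point needing care, and the nearest thing to an obstacle, is that all the cited results concern one and the same object. Each proposition must be invoked with the same ultrafilter $\Ucal$ and sequence $(\eta_k)$ from \cref{lem:diagonal-ultrafilter}, the same weights $t_k=2^{-k}$, and the same fixed $\alpha\in(0,1)$. Two of them also need this identification made explicit. \cref{prop:cht} relies on \cref{lem:companion} with $\Vcal=\Ucal$ and $\mu_0=R_0$. \cref{prop:as} relies on \cref{prop:cht-as}, whose hypotheses are $R_0(\{z\})=0$, positivity, and CHT; all three have been secured in the earlier steps.

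Once these identifications are recorded, the theorem follows by conjunction. Any $\alpha\in(0,1)$ works, so the witness may be stated for an arbitrary such $\alpha$.
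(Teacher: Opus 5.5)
Your proposal is correct and is exactly the paper's proof: it takes the single probability function $P$ of \eqref{eq:P} and combines \cref{prop:basic-properties,prop:approximation,prop:cht,prop:not-au} with \cref{prop:as}. One small attribution point: the proof of \cref{prop:cht} takes the clopen agreement $P(C)=R(C)$ from \cref{prop:basic-properties}, not from \cref{lem:companion}; since the latter, with $\Vcal=\Ucal$ and $\mu_0=R_0$, gives the same identity, nothing in your argument changes.
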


\begin{proof}
Combine \cref{prop:basic-properties,prop:approximation,prop:cht,prop:not-au}
and \cref{prop:as}.
\end{proof}

\begin{remark}[Where diagonalization fails, concretely]\label{rem:threshold-diagonal}
\cref{lem:diagonal-ultrafilter}(a) is exactly what the CHT verification uses:
for each fixed $\delta$, the set of indices $k$ with $\eta_k\le\delta$ is
$\Ucal$-large, and that suffices to place the infinite hitting union on the
correct side of the ultrafilter.  Almost-uniform convergence would require one
$\Ucal$-large set of indices on which the corresponding error bounds hold
eventually for every $\delta$ at once, and
\cref{lem:diagonal-ultrafilter}(b) rules that out.  This is the separation
between \eqref{eq:weak-quantifiers} and \eqref{eq:au-quantifiers} in concrete
form: the exceptional event may be chosen after the accuracy threshold but not
before it.
\qedthm \end{remark}

\begin{remark}[The status of Nielsen's Corollary~1]\label{rem:corollary}
\cref{prop:as} shows that the posteriors of $P$ do converge to the truth almost
surely.  \cref{thm:counterexample} therefore does not refute Nielsen's
Corollary~1 \citep[p.~408]{nielsen2021convergence}, which draws the almost-sure
conclusion from approximation and CHT.  What it does remove is that corollary's
only published proof, which applies Theorem~2 and then passes from
almost-uniform to almost-sure convergence.  Whether approximation and CHT
together imply almost-sure convergence to the truth for an arbitrary
probability function is, so far as we know, open.  What
\cref{prop:cht-as} settles is that no counterexample can be found in the family
to which the construction of \cref{sec:construction} belongs: for a mixture of
a two-valued ultrafilter probability read along the distinguished paths with a
countably additive probability giving $z$ probability zero, subject to
\eqref{eq:posi}, CHT alone --- without the approximation property --- already
implies almost-sure convergence to the truth.  The almost-sure convergence of
$P$ is thus forced by its having CHT, not an artifact of the particular weights
chosen.
\qedthm \end{remark}

\section{Existence Without Extension}\label{sec:existence}

Having established that one of the principal results stated by
\citet{nielsen2021convergence} is false, we take up the proof he gives for a
second.  That proof fails for an unrelated reason, which this section
identifies: it requires nontrivial finitely additive extensions of a Borel
probability agreeing with it on every open set, and regularity leaves no room
for any.  The statement the proof was meant to establish is nevertheless true,
and the construction of \cref{sec:construction} proves it once the sequence
$(\eta_k)$ is replaced by a convergent one.

\subsection{No Nontrivial Extensions Agreeing on Open Sets}\label{sec:published-proof}

The proof of \citet[Theorem~3, pp.~409--410]{nielsen2021convergence} begins
with the fair-coin Borel probability $Q$ and the
algebra $\Acal$ generated by all open subsets of $\Omega$.  It claims that there are
nontrivial finitely additive Borel probabilities agreeing with $Q$ on $\Acal$.
Regularity rules this out.

\begin{proposition}[Uniqueness from agreement on open sets]
\label{prop:open-uniqueness}
Let $\mu$ be a countably additive Borel probability on $\Omega$.  If a
probability function $P$ on $\Borel$ agrees with $\mu$ on every open set, then
$P=\mu$ on every Borel set.\qedthm
\end{proposition}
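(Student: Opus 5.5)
The plan is to sandwich each Borel set between a closed set and an open set using \cref{lem:regularity}, and then use monotonicity of $P$, which follows from finite additivity and nonnegativity alone. Agreement on open sets gives agreement on closed sets by complementation. So $P$ and $\mu$ agree on both ends of every sandwich, and the sandwiches can be made arbitrarily tight under $\mu$.

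First I record that agreement on open sets gives agreement on closed sets. If $F$ is closed, then $F^c$ is open. By finite additivity $P(F)=1-P(F^c)$, and the same identity holds for $\mu$. Hence $P(F)=1-\mu(F^c)=\mu(F)$.

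Next fix $A\in\Borel$ and $\rho>0$. By \cref{lem:regularity}, applied to the countably additive $\mu$, choose a closed $F$ and an open $G$ with $F\subseteq A\subseteq G$ and $\mu(G\setminus F)<\rho$. Monotonicity of $P$ gives $P(F)\le P(A)\le P(G)$, and the first step turns this into $\mu(F)\le P(A)\le\mu(G)$. The same chain holds for $\mu(A)$, and $\mu(G)-\mu(F)=\mu(G\setminus F)<\rho$. Therefore $|P(A)-\mu(A)|<\rho$, and since $\rho>0$ is arbitrary, $P(A)=\mu(A)$.

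No real obstacle remains, because all the countable additivity is carried by $\mu$ in \cref{lem:regularity}. The point to check is that nothing about $P$ beyond finite additivity is used. Monotonicity of $P$ comes from $P(G)=P(A)+P(G\setminus A)\ge P(A)$, and the complement identity needs only additivity and normalization. In particular, agreement on the algebra $\Acal$ generated by the open sets already forces $P=\mu$ on all of $\Borel$, so no nontrivial extension of the kind the published proof seeks can exist.
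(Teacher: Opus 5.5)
Your proof is correct and is essentially the paper's own argument. Both get agreement on closed sets by complementation, sandwich $A$ between a closed and an open set via \cref{lem:regularity}, apply monotonicity of $P$, and observe that $P(A)$ and $\mu(A)$ both lie in an interval of length less than $\rho$; your explicit check that only finite additivity and normalization of $P$ are used is a nice touch but does not change the route.
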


\begin{proof}
Agreement on open sets implies agreement on closed sets by complementation.  Let
$B\in\Borel$ and $\eps>0$.  By \cref{lem:regularity}, choose a closed $F$ and an
open $G$ such that:
\[
  F\subseteq B\subseteq G,
  \qquad \mu(G\setminus F)<\eps.
\]
Monotonicity and the assumed agreement give
\[
  \mu(F)=P(F)\le P(B)\le P(G)=\mu(G).
\]
The interval $[\mu(F),\mu(G)]$ has length less than $\eps$ and contains
$\mu(B)$ as well, so
$\abs{P(B)-\mu(B)}<\eps$.  Letting $\eps\downarrow0$ yields $P(B)=\mu(B)$.
\end{proof}

\begin{corollary}\label{cor:singleton-extension}
If $Q$ is the fair-coin measure and $\Acal$ is the algebra generated by the open
sets, then the set of finitely additive Borel extensions of $Q\restrict\Acal$ is the
singleton $\{Q\}$.
\qedthm \end{corollary}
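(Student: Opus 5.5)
The plan is to read the corollary off \cref{prop:open-uniqueness}, applied with $\mu=Q$. Two inclusions have to be checked: that $Q$ itself belongs to the set of finitely additive Borel extensions of $Q\restrict\Acal$, and that every such extension equals $Q$.

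For the first inclusion, $Q$ is a countably additive Borel probability, so it is in particular a probability function on $\Borel$. It trivially restricts to $Q\restrict\Acal$. Here we note that $\Acal\subseteq\Borel$, since open sets are Borel and $\Borel$ is an algebra containing them; so the restriction makes sense. Hence the set of extensions is nonempty and contains $Q$.

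For the second inclusion, let $P$ be any probability function on $\Borel$ with $P=Q$ on $\Acal$. Every open set lies in $\Acal$ by definition of the generated algebra, so $P$ agrees with $Q$ on every open set. \cref{prop:open-uniqueness}, whose hypothesis requires only countable additivity of $\mu=Q$ and finite additivity of $P$, then gives $P(B)=Q(B)$ for every $B\in\Borel$. Thus $P=Q$.

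There is no real obstacle here; the substance lies in \cref{prop:open-uniqueness}, and through it in \cref{lem:regularity}. The only point to state carefully is that ``extension'' means a finitely additive probability on all of $\Borel$ whose restriction to $\Acal$ is $Q\restrict\Acal$. Under that reading, agreement on $\Acal$ in particular entails agreement on the open sets, which is all the proposition needs.
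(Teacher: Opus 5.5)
Your proposal is correct and is essentially the paper's proof. Agreement on $\Acal$ gives agreement with $Q$ on every open set, and \cref{prop:open-uniqueness} with $\mu=Q$ then forces equality on all of $\Borel$. Your explicit check that $Q$ itself is an extension, so that the set is nonempty, is a small point the paper leaves implicit.
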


\begin{proof}
Every open set belongs to $\Acal$, so a finitely additive Borel extension of
$Q\restrict\Acal$ agrees with the countably additive probability $Q$ on every
open set.  \cref{prop:open-uniqueness} gives equality on every Borel set.
\end{proof}

The failure here is one of nonuniqueness, not of existence.  Extensions of a
charge from a subalgebra to a larger algebra always exist
\citep[Corollary~3.3.4, p.~74]{rao1983charges}, and the algebra $\Acal$ is a
proper subalgebra of $\Borel$; what
\cref{cor:singleton-extension} says is that on $\Acal$ the extension is already
determined, because the open sets pin down a countably additive Borel
probability among all the finitely additive ones.  The published proof needs
extensions that differ from $Q$, and there are none.

The specific extension formula in the published proof is also not well defined.
Let $D$ be a countable dense subset of $\Omega$.  The proof proposes:
\begin{equation}\label{eq:bad-extension}
  \widetilde P\bigl((A_1\cap D)\cup(A_2\cap D^c)\bigr)=Q(A_1),
  \qquad A_1,A_2\in\Acal.
\end{equation}
Because $Q(D)=0$, regularity gives an open set $G\supseteq D$ with
$Q(G)<1/2$.\footnote{The appeal to regularity is convenient but not necessary:
because $D$ is
countable, an open set of small measure containing it can be exhibited
outright. Enumerate $D=\set{x_1,x_2,\dots}$ and, for each $j\in\N$, put
\[
  B_j=\bigl[\,x_j\restrict(j+2)\,\bigr],
\]
the cylinder of length $j+2$ containing $x_j$. Since $Q$ is the fair-coin
measure, $Q\bigl(\bigl[\,\omega\restrict n\,\bigr]\bigr)=2^{-n}$ for every
$\omega\in\Omega$ and every $n\in\N$, so $Q(B_j)=2^{-j-2}$. The set
$G=\bigcup_{j\ge 1}B_j$ is a union of cylinders and hence open; it contains
$D$, since $x_j\in B_j$ for each $j$; and countable subadditivity of $Q$
gives
\[
  Q(G)\;\le\;\sum_{j=1}^{\infty}Q(B_j)
  \;=\;\sum_{j=1}^{\infty}2^{-j-2}
  \;=\;\tfrac14\;<\;\tfrac12 .
\]
This construction uses only countable additivity of $Q$ and the cylinder
values $Q\bigl(\bigl[\,\omega\restrict n\,\bigr]\bigr)=2^{-n}$, and it
dispenses with \cref{lem:regularity} at this point. It also makes the
underlying phenomenon visible: the cylinder around each point of $D$ may be
taken as short as one pleases, so a dense open set can carry arbitrarily
small probability. Density is a topological property and imposes no lower
bound on measure. The set $D$ itself need not belong to $\Acal$; what the
argument requires is that $G$ belong to $\Acal$, which holds because $G$ is
open.}  But:
\[
  D=(\Omega\cap D)\cup(\varnothing\cap D^c)
   =(G\cap D)\cup(\varnothing\cap D^c),
\]
while \eqref{eq:bad-extension} assigns the two representations the respective
values $1$ and $Q(G)<1/2$.

\cref{cor:singleton-extension} therefore invalidates the published proof of
Nielsen's Theorem~3, and with it the proof of Theorem~4, which draws on two
distinct extreme points of the same extension set.  \cref{thm:positive} below
supplies a replacement proof for the statement of Theorem~3; we leave the
standalone truth of Theorem~4 open.  Nielsen's Theorem~5
\citep[p.~411]{nielsen2021convergence} rests on a separate construction, not on
the extension set, and is unaffected by the argument of this subsection.  Its
second claim is nevertheless left without a proof by the refutation of
\cref{sec:main}; \cref{rem:nielsen-thm5} records what becomes of it.

\subsection{An Almost-Uniformly Convergent Family}\label{sec:positive}

The construction of \cref{sec:construction} yields an almost-uniformly convergent
probability function once the sequence $(\eta_k)$ is replaced by an ordinarily
convergent one.  This proves the statement of
\citet[Theorem~3, p.~409]{nielsen2021convergence}, and with it the assertion
\citet[p.~410]{nielsen2021convergence} draws from that theorem, that uncountably
many merely finitely additive probability functions converge to the truth
almost surely.

Let $\Ucal$ now be any free ultrafilter on $\N$, and let
$(\theta_k)\subset(0,1)$ satisfy
\begin{equation}\label{eq:theta-zero}
  \theta_k\longrightarrow0.
\end{equation}
The point $z$, the shells $S_k$ and distinguished paths $d_k$, the weights
$t_k=2^{-k}$, and the measures $\nu_k$ are those of \cref{sec:construction};
only the diffuse fractions and the ultrafilter change.  Define $R_0^{\theta}$,
$P_\alpha^{\theta}$, and $R_\alpha^{\theta}$ by replacing $\eta_k$ with $\theta_k$
in \eqref{eq:R0}, \eqref{eq:P}, and \eqref{eq:R-companion}, respectively, and let
$u$ be given by \eqref{eq:u} for this $\Ucal$.

\begin{theorem}[Replacement for Nielsen's Theorem~3]\label{thm:positive}
For every $\alpha\in(0,1)$, $P_\alpha^{\theta}$ is merely finitely additive,
satisfies \eqref{eq:posi}, and converges to the truth almost uniformly.  Distinct
values of $\alpha$ give distinct probability functions.  Consequently there are
uncountably many merely finitely additive probability functions satisfying
\eqref{eq:posi} and converging to the truth almost uniformly.
\qedthm \end{theorem}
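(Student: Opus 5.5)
The plan is to handle the three structural claims by repeating earlier computations, and to prove almost-uniform convergence by transporting Egorov's theorem through the companion measure, then correcting the exceptional event along the distinguished paths with \cref{cor:shell}.

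\emph{Structural claims.} Mere finite additivity and \eqref{eq:posi} follow word for word as in \cref{prop:basic-properties}, with $\eta_k$ replaced by $\theta_k$. Those arguments used only $\theta_k\in(0,1)$, the freeness of $\Ucal$, and the diffuseness of $\nu_k$. For distinctness, compute
\[
  P_\alpha^{\theta}(D)=\alpha+(1-\alpha)\sum_k t_k(1-\theta_k).
\]
This is affine in $\alpha$ with slope $\sum_kt_k\theta_k>0$, so it is injective in $\alpha$. Uncountability then follows from $\alpha$ ranging over $(0,1)$.

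\emph{Almost-uniform convergence: first step.} Fix $A\in\Borel$ and $\eps>0$, and let $R=R_\alpha^{\theta}$ be the companion measure. By \cref{lem:companion}, $P_n^{\theta}(A)(\omega)=R(A^{\sharp}\given[\omega\restrict n])$. Exactly as in the proof of \cref{prop:cht-as}, $R$ charges every nonempty cylinder. Hence \cref{lem:standard-facts}(a) gives $R$-almost sure convergence to $\ind_{A^\sharp}$, and part~(b) gives an event $E_0$ with $R(E_0)<\rho$ off which the convergence is uniform, where $\rho<\min(\alpha,\eps)$.

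Since $R(\{z\})=\alpha>\rho$, we have $z\notin E_0$. Off $\{z\}$ the sets $A^\sharp$ and $A$ coincide, so on $E_0^c\setminus\{z\}$ the convergence is already uniform to $\ind_A$. The difficulty is that $E_0$ may contain a $\Ucal$-large set of distinguished paths. Then $u(E_0)=1$ and $P_\alpha^\theta(E_0)\ge\alpha$, even though $R(E_0)$ is small.

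\emph{Second step: repairing the exceptional event.} Put $b=u(A)$ and let
\[
  G=\{k:\ind_A(d_k)=b\}.
\]
Then $G\in\Ucal$ by definition of $u$. Define
\[
  E=E_0\setminus\{d_k:k\in G\}\setminus\{z\}.
\]
Its distinguished index set avoids $G$, so $u(E)=0$. Since $E\subseteq E_0$, we get $(1-\alpha)R_0(E)\le R(E_0)<\rho$, and hence $P_\alpha^\theta(E)<\eps$.

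It remains to show uniform convergence on the added points $d_k$ with $k\in G$, together with $z$, for a given accuracy $r>0$.
\begin{itemize}
\item \emph{At $z$ and at $d_k$ for times $n<k$.} Since $z\notin E_0$, the posterior at $z$ converges to $\ind_{A^\sharp}(z)=b$. For $n<k$ the cylinder $[d_k\restrict n]$ equals $Z_n=[z\restrict n]$, so the error at $d_k$ equals $|P_n(A)(z)-b|$ because $\ind_A(d_k)=b$. This is small once $n$ exceeds a single threshold $N_0$, uniformly in $k$.
\item \emph{At $d_k$ for times $n\ge k$.} Apply \eqref{eq:shell-estimate} to $A$ if $d_k\notin A$, and to $A^c$ if $d_k\in A$. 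This bounds the error by $\theta_k$, which is below $r$ for all $k$ beyond some $K_r$ by \eqref{eq:theta-zero}.
\item \emph{The finitely many $k\le K_r$.} Each such $d_k$ is an atom of $R_0$. By \cref{lem:atom}, the error at time $n$ is at most $\gamma_n/(R_0(\{d_k\})+\gamma_n)$, and $\gamma_n\to0$ by continuity from above, so each of these finitely many paths converges individually.
\end{itemize}
Taking the maximum of the finitely many thresholds so obtained gives the required $N$.

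\emph{Main obstacle.} The crux is this replacement of $E_0$ by $E$. The exceptional event must have small $u$-value, and not merely small $R$-value. The paths removed from it must therefore be controlled uniformly, and that is exactly where $\theta_k\to0$ is used and where the $\eta_k$ of \cref{sec:construction} failed.
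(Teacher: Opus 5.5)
Your architecture matches the paper's: the companion measure, Egorov, then repairing the exceptional event along the distinguished paths with \cref{cor:shell}, convergence along $Z_n$, and individual convergence at finitely many atoms. But there is a genuine error at the limit path $z$.

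Your exceptional event $E=E_0\setminus\{d_k:k\in G\}\setminus\{z\}$ leaves $z$ in $E^c$. Your first bullet shows only that the posterior at $z$ converges to $\ind_{A^{\sharp}}(z)=u(A)$. Almost-uniform convergence demands convergence to $\ind_A(z)$, and the two differ whenever $\ind_A(z)\ne u(A)$. For $A=D$ we have $u(D)=1$ but $z\notin D$. Since $[z\restrict n]=Z_n$,
\[
  P_\alpha^{\theta}(D\given Z_n)=\frac{\alpha+(1-\alpha)\sum_{k>n}t_k(1-\theta_k)}{\alpha+(1-\alpha)2^{-n}}\longrightarrow1,
\]
so the error at $z$ tends to $1$. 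For $A=\{z\}$ the error is identically $1$. Convergence on your $E^c$ therefore fails even pointwise. Your own observation that $A^{\sharp}$ and $A$ agree off $\{z\}$ is exactly why Egorov's uniformity transfers to every point of $E_0^c$ except $z$.

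The repair is one line, and it is what the paper does: put $z$ into the exceptional event, taking $E=(E_0\setminus\{d_k:k\in G\})\cup\{z\}$. This costs nothing. We have $u(\{z\})=0$, since no $d_k$ equals $z$, and $R_0^{\theta}(\{z\})=0$, so both $u(E)=0$ and $P_\alpha^{\theta}(E)<\eps$ survive. Your treatment of $d_k$ at times $n<k$ is also unaffected. It uses only that $P_\alpha^{\theta}(A\given Z_n)\to u(A)$, not that $z$ belongs to $E^c$.

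With that change your proof is correct and differs from the paper's only in bookkeeping. You apply Egorov under $R$ and delete from $E_0$ only the paths indexed by $G$. Any $d_k$ with $k\notin G$ that lies outside $E_0$ is then covered by Egorov's uniformity, and no tail estimate is needed. The paper instead applies Egorov under $R_0^{\theta}$, strips all of $D$ from $E_0$, and adds back the paths $d_k$ with $k\notin G$ and $k>K_0$. It pays a tail $\sum_{k>K_0}t_k$ in the probability budget and handles the finitely many small indices by individual convergence at the atoms.
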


\begin{proof}
Write $D=\{d_k:k\in\N\}$ as in \eqref{eq:Dset}.
Finite additivity, failure of countable additivity, positivity, and clopen
approximation are proved exactly as in \cref{prop:basic-properties}
and \cref{prop:approximation}.  We prove almost-uniform convergence for an
arbitrary $A\in\Borel$ and $\eps>0$.

\smallskip
\noindent\emph{Step 1: a countably additive version of the posterior process.}
Put $a=u(A)$ and define $A^{\sharp}$ by \eqref{eq:Asharp}.
\cref{lem:companion}, applied with $\Vcal=\Ucal$ and $\mu_0=R_0^{\theta}$,
gives
\begin{equation}\label{eq:theta-companion}
  P_{\alpha,n}^{\theta}(A)(\omega)
  =R_\alpha^{\theta}
     \bigl(A^{\sharp}\given[\omega\restrict n]\bigr),
\end{equation}
where $P_{\alpha,n}^{\theta}(A)$ denotes the time-$n$ posterior generated by
$P_\alpha^\theta$.  By Lemma~\ref{lem:standard-facts}(a), the right-hand side
converges to $\ind_{A^{\sharp}}$ $R_\alpha^\theta$-almost surely and hence
$R_0^\theta$-almost surely.  Lemma~\ref{lem:standard-facts}(b) supplies a Borel
event $E_0$ such that
\begin{equation}\label{eq:E0}
  R_0^{\theta}(E_0)<\frac{\eps}{4(1-\alpha)}
\end{equation}
and the convergence in \eqref{eq:theta-companion} is uniform on $E_0^c$.

\smallskip
\noindent\emph{Step 2: choose one exceptional event with small $P_\alpha^\theta$-mass.}
Let
\[
  M=\{k:\ind_A(d_k)\ne a\}.
\]
The set $M$ is not in $\Ucal$: it is the complement of
$\{k:d_k\in A\}$ when $a=1$, and equals that set when $a=0$.  Choose $K_0$
so large that
\begin{equation}\label{eq:tail-small}
  \sum_{k>K_0}t_k<\frac{\eps}{4(1-\alpha)}
\end{equation}
and set
\begin{equation}\label{eq:E-positive}
  E=(E_0\setminus D)\cup\{z\}
    \cup\{d_k:k>K_0,\ k\in M\}.
\end{equation}
The first two pieces contain no distinguished point $d_k$, and the index set of
the third piece is contained in $M\notin\Ucal$.  Hence $u(E)=0$.  Moreover,
\[
 R_0^\theta(E)
 \le R_0^\theta(E_0)+\sum_{k>K_0}t_k
 <\frac{\eps}{2(1-\alpha)}.
\]
Therefore
\[
  P_\alpha^\theta(E)=(1-\alpha)R_0^\theta(E)<\frac{\eps}{2}<\eps.
\]

\smallskip
\noindent\emph{Step 3: uniform convergence away from the distinguished points.}
If $\omega\in E^c\setminus(D\cup\{z\})$, then $\omega\notin E_0$ and
$\ind_{A^{\sharp}}(\omega)=\ind_A(\omega)$.  Uniform convergence on these points
therefore follows from the definition of $E_0$.

\smallskip
\noindent\emph{Step 4: uniform convergence on the remaining $d_k$.}
Let
\[
  q_n=P_\alpha^{\theta}(A\given Z_n).
\]
Exactly as in the proof of \cref{prop:cht}, $q_n\to a$.  For $n\ge k$,
\eqref{eq:shell-estimate} applied to $A$ if $d_k\notin A$, and to $A^c$ if
$d_k\in A$, gives
\begin{equation}\label{eq:theta-shell-bound}
  \left|P_{\alpha,n}^{\theta}(A)(d_k)-\ind_A(d_k)\right|\le\theta_k.
\end{equation}
For each fixed $k$, the cylinders $[d_k\restrict n]$ decrease to $\{d_k\}$.
Continuity from above of $R_\alpha^\theta$ therefore gives
\begin{equation}\label{eq:fixed-dk}
  P_{\alpha,n}^{\theta}(A)(d_k)\longrightarrow\ind_A(d_k),
\end{equation}
because $R_\alpha^\theta(\{d_k\})>0$.

Fix an accuracy $r>0$.  Choose $K_1>K_0$ so that $\theta_k<r$ for all
$k\ge K_1$, and choose $N_0$ so that $|q_n-a|<r$ for $n\ge N_0$.  By \eqref{eq:fixed-dk}, there is an integer
$N_1$ such that
$\bigl|P_{\alpha,n}^{\theta}(A)(d_k)-\ind_A(d_k)\bigr|<r$ for every $n\ge N_1$
and every $k<K_1$.  Let $N=\max\{N_0,N_1\}$.  If $d_k\in E^c$ and $n\ge N$, then one of
three cases applies:
\begin{enumerate}[(i)]
\item If $k<K_1$, the error is less than $r$ by the choice of $N_1$.
\item If $k\ge K_1$ and $n\ge k$, the error is less than $r$ by
      \eqref{eq:theta-shell-bound}.
\item If $k\ge K_1$ and $n<k$, then $k>K_0$ and $d_k\notin E$ imply
      $\ind_A(d_k)=a$.  Since $[d_k\restrict n]=Z_n$, the error is
      $|q_n-a|<r$.
\end{enumerate}
Thus the convergence is uniform on the distinguished points remaining in $E^c$
and, with Step~3, on all of $E^c$.

\smallskip
\noindent\emph{Step 5: uncountably many distinct probabilities.}
Let
\[
  c=\sum_{k=1}^{\infty}t_k(1-\theta_k).
\]
Since every $\theta_k$ is positive, $c<1$.  As $u(D)=1$,
\[
  P_\alpha^\theta(D)=c+\alpha(1-c),
\]
which is strictly increasing in $\alpha$.  Hence distinct values of $\alpha$ give
distinct probability functions, and the family is uncountable.
\end{proof}

\section{Almost-Sure Learning Does Not Characterize Countable Additivity}\label{sec:as-ca}

Everything so far has turned on almost-uniform convergence.
\citet[p.~408]{nielsen2021convergence} leaves the characterization of the
weaker notion, almost-sure convergence to the truth, as an open problem.  This
section closes off its most immediate candidate answer, countable additivity
itself, and then shows something stronger: the merely finitely additive
component of the witnessing probability function is invisible not merely to
almost-sure learning but to every finite-data conditional probability.

\subsection{The Conjecture}\label{sec:conjecture}

\cref{thm:counterexample} separates almost-uniform convergence from the
conjunction of approximation and CHT.  A second and more elementary question
concerns almost-sure convergence on its own, and the most immediate candidate
for a characterizing condition is countable additivity itself.  One direction is
classical: a countably additive probability satisfying \eqref{eq:posi} converges
to the truth almost surely, which is \cref{lem:standard-facts}(a) applied with
$\mu=P$.  It is natural to ask whether the converse also holds.

\begin{conj}[A natural but false conjecture]
\label{conj:as-iff-ca}
Let $P$ be a probability function on $\Borel$ satisfying \eqref{eq:posi}.  Then
$P$ is countably additive if and only if:
\[
    P_n(A)\longrightarrow \ind_A
    \qquad P\text{-almost surely}
\]
for every $A\in\Borel$.
\qedthm \end{conj}

Recall from \cref{def:convergence}(a) that the exceptional null event in this
statement is allowed to depend on $A$; \cref{sec:finite-data} returns to that
point.  This section refutes the conjecture by an argument independent of
\cref{sec:construction,sec:main}.

\subsection{A Merely Finitely Additive Counterexample}

\cref{thm:positive} already refutes \cref{conj:as-iff-ca}: the probability
functions $P_\alpha^{\theta}$ constructed there are merely finitely additive and
converge to the truth almost uniformly, hence almost surely.  The construction
below is more elementary, and it yields more than the failure of the conjecture.
A merely finitely additive component can be invisible on every
finite-information event --- not merely small there, but matched exactly by a
countably additive probability --- while still destroying countable additivity
on infinitary events.  The construction repeats, in simpler form, the pattern of
\cref{sec:construction}, and repeats the few facts from it that the argument
needs.

\begin{theorem}
\label{thm:as-learning-not-ca}
There exists a merely finitely additive probability function
$P:\Borel\to[0,1]$ satisfying \eqref{eq:posi} such that, for every
$A\in\Borel$,
\[
    P_n(A)\longrightarrow \ind_A
    \qquad P\text{-almost surely}.
\]
Consequently, Conjecture~\ref{conj:as-iff-ca} is false.
\qedthm \end{theorem}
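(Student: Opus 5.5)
The plan is to build a simpler instance of the pattern of \cref{sec:construction}: a mixture $P=\alpha u+(1-\alpha)\mu_0$, where $u$ is a two-valued ultrafilter probability read along a sequence converging to $z$ and $\mu_0$ is a countably additive measure. The measure $\mu_0$ should put an atom at each distinguished path and charge every cylinder. The most economical choice is $\mu_0=\tfrac12 Q+\tfrac12\sum_k t_k\delta_{d_k}$ with $t_k=2^{-k}$. Then $\mu_0(\{z\})=0$, every nonempty cylinder has positive $\mu_0$-mass, and $\mu_0(\{d_k\})>0$ for every $k$.

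Let $\Ucal$ be any free ultrafilter, and put $u=u_{\Ucal}$ via \eqref{eq:u}, $R'=\alpha\delta_z+(1-\alpha)\mu_0$, and $P=\alpha u+(1-\alpha)\mu_0$. The steps, in order, are as follows.

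\emph{(i) Positivity and merely finite additivity.} Positivity \eqref{eq:posi} follows from $P\ge(1-\alpha)\mu_0$ and $\mu_0\ge\tfrac12Q$, together with \eqref{eq:faircoin}. Failure of countable additivity is shown exactly as in \cref{prop:basic-properties}: $u(D)=1$ while $u(\{d_k\})=0$ for every $k$, so $P(D)$ exceeds $\sum_kP(\{d_k\})$ by $\alpha$.

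\emph{(ii) Reduction to a countably additive process.} By \cref{lem:companion}, $P_n(A)=R'(A^{\sharp}\given[\,\cdot\restrict n])$ for every $A\in\Borel$, and $P(C)=R'(C)$ on clopen $C$.

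\emph{(iii) Almost-sure convergence.} This step repeats the second half of the proof of \cref{prop:cht-as} verbatim, with $J=\N$. First check that $R'$ charges every nonempty cylinder; this is immediate from $R'\ge(1-\alpha)\mu_0$. Then \cref{lem:standard-facts}(a) yields an $R'$-null Borel set $N_A$ off which $P_n(A)\to\ind_{A^{\sharp}}$. Since $R'$ has atoms at $z$ and at every $d_k$, the set $N_A$ contains none of these points. Hence $u(N_A)=0$, and also $\mu_0(N_A)=0$, so $P(N_A)=0$. Since $A\symdiff A^{\sharp}\subseteq\{z\}$ and $P(\{z\})=0$, convergence to $\ind_A$ holds off $N_A\cup\{z\}$, a $P$-null set. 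Alternatively, the whole theorem follows by citing \cref{thm:positive}, since almost-uniform convergence implies almost-sure convergence. The direct route is preferable, however, because it also delivers the finite-data invisibility remark that motivates the section.

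\emph{(iv) Refuting \cref{conj:as-iff-ca}.} $P$ is not countably additive yet satisfies the right-hand side of the conjecture, so the ``if'' direction fails.

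The only delicate step is (iii), namely showing that the L\'evy null set is $P$-null and not merely $R'$-null. This works only because every $d_k$ is an atom of $\mu_0$. With a diffuse $\mu_0$, the null set could contain a $\Ucal$-large family of the $d_k$ and have $u$-value $1$; \cref{prop:noatom} shows what goes wrong in that case. So the first thing to verify carefully is the atom bookkeeping: $R'(\{d_k\})=(1-\alpha)\mu_0(\{d_k\})>0$ for all $k$, and $R'(\{z\})=\alpha>0$.
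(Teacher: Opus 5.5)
Your proposal is correct and follows essentially the paper's proof: the same mixture (the paper fixes $\alpha=\tfrac12$ and $\mu=\tfrac12Q+\tfrac12\sum_k2^{-k}\delta_{d_k}$), the same companion $R$ with the modified event $A^{\sharp}$, and the same observation that the L\'evy null set avoids $z$ and every atom $d_k$ and is therefore $P$-null. The only difference is that you cite \cref{lem:companion} and the second half of \cref{prop:cht-as}, whereas the paper re-derives those steps inline to keep the section self-contained.
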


\begin{proof}
Let
\[
    z=(0,0,0,\ldots)\in\Omega
\]
and, for $k\in\N$, let
\[
    d_k
    =
    0^{k-1}10^{\infty},
\]
so that $d_k$ has a $1$ in coordinate $k$ and a $0$ in every other coordinate.
Thus $d_k\to z$ in the Cantor topology.

Let $Q$ again be the fair-coin measure of \cref{sec:geometry}, and define the
countably additive probability
\begin{equation}
\label{eq:app-mu}
    \mu
    :=
    \frac{1}{2} Q
    +
    \frac{1}{2}\sum_{k=1}^{\infty}2^{-k}\delta_{d_k}.
\end{equation}
Then
\[
    \mu(\{z\})=0,
    \qquad
    \mu(\{d_k\})=2^{-k-1}>0
\]
for every $k$.

Let $\Ucal$ be a free ultrafilter on $\N$. Define
$u:\Borel\to\{0,1\}$ by
\begin{equation}
\label{eq:app-u}
    u(A)
    :=
    \begin{cases}
       1,
       &
       \text{if }\{k\in\N:d_k\in A\}\in\Ucal,
       \\[1mm]
       0,
       &
       \text{if }\{k\in\N:d_k\in A\}\notin\Ucal.
    \end{cases}
\end{equation}
Equivalently, $u(A)$ is the $\Ucal$-limit \eqref{eq:ulimit} of the sequence of
truth values of $A$ along the distinguished paths:
\[
    u(A)=\lim_{k\to\Ucal}\ind_A(d_k).
\]
By \cref{lem:u-properties}, whose proof uses nothing about $\Ucal$ beyond its
being an ultrafilter, $u$ is a two-valued probability function on $\Borel$.

Now put
\begin{equation}
\label{eq:app-P}
    P
    :=
    \frac{1}{2}u+\frac{1}{2}\mu.
\end{equation}

\medskip
\noindent
\emph{Step 1: $P$ satisfies \eqref{eq:posi}.}
Since $\mu\ge \frac{1}{2} Q$, every nonempty cylinder $C$ satisfies
\[
    P(C)
    \ge
    \frac{1}{2}\mu(C)
    \ge
    \frac{1}{4} Q(C)
    >0.
\]
Hence all conditional probabilities appearing below are well defined.

\medskip
\noindent
\emph{Step 2: $P$ is not countably additive.}
Let
\[
    D:=\{d_k:k\in\N\}.
\]
Since every $d_k$ belongs to $D$,
\[
    u(D)=1,
\]
whereas $Q(D)=0$ and hence
\[
    \mu(D)=\frac{1}{2}.
\]
Therefore
\begin{equation}
\label{eq:app-PD}
    P(D)
    =
    \frac{1}{2}+\frac{1}{4}
    =
    \frac{3}{4}.
\end{equation}

On the other hand, a free ultrafilter contains no singleton, so
\[
    u(\{d_k\})=0
\]
for every $k$. It follows from
\eqref{eq:app-mu}--\eqref{eq:app-P} that
\[
    P(\{d_k\})
    =
    2^{-k-2}.
\]
Consequently,
\[
    \sum_{k=1}^{\infty}P(\{d_k\})
    =
    \frac{1}{4}
    \ne
    \frac{3}{4}
    =
    P(D).
\]
Thus $P$ is not countably additive.

\medskip
\noindent
\emph{Step 3: on finite-information events, $u$ behaves like point mass at
$z$.}
If $C$ is clopen, then membership in $C$ is determined by finitely many
coordinates. Because $d_k\to z$, there exists $K$ such that
\[
    \ind_C(d_k)=\ind_C(z)
    \qquad(k\ge K).
\]
Since every cofinite subset of $\N$ belongs to the free ultrafilter
$\Ucal$,
\begin{equation}
\label{eq:app-u-clopen}
    u(C)=\ind_C(z)
    \qquad(C\text{ clopen}).
\end{equation}

Define the countably additive probability
\begin{equation}
\label{eq:app-R}
    R
    :=
    \frac{1}{2}\delta_z+\frac{1}{2}\mu.
\end{equation}
Equation~\eqref{eq:app-u-clopen} implies
\begin{equation}
\label{eq:app-P-R-clopen}
    P(C)=R(C)
    \qquad
    \text{for every clopen }C.
\end{equation}

\medskip
\noindent
\emph{Step 4: every $P$-posterior is an ordinary $R$-posterior for a
slightly modified event.}
Fix $A\in\Borel$, and define
\begin{equation}
\label{eq:app-Asharp}
    A^\sharp
    :=
    \begin{cases}
       A\cup\{z\},&u(A)=1,\\
       A\setminus\{z\},&u(A)=0.
    \end{cases}
\end{equation}
Thus
\begin{equation}
\label{eq:app-Asharp-diff}
    A^\sharp\symdiff A\subseteq\{z\}
\end{equation}
and
\begin{equation}
\label{eq:app-Asharp-z}
    \ind_{A^\sharp}(z)=u(A).
\end{equation}

The set function $u$ is multiplicative:
\begin{equation}
\label{eq:app-u-multiplicative}
    u(A\cap B)=u(A)u(B)
    \qquad(A,B\in\Borel).
\end{equation}
Indeed, if either $u(A)$ or $u(B)$ is zero, the corresponding index set does
not belong to $\Ucal$, and neither does its intersection with the other;
if both are one, their intersection belongs to $\Ucal$.

Let $C$ be clopen. By
\eqref{eq:app-u-clopen},
\eqref{eq:app-Asharp-z}, and
\eqref{eq:app-u-multiplicative},
\begin{equation}
\label{eq:app-u-intersection}
    u(A\cap C)
    =
    u(A)\ind_C(z)
    =
    \delta_z(A^\sharp\cap C).
\end{equation}
Moreover, $\mu(\{z\})=0$, so
\eqref{eq:app-Asharp-diff} gives
\begin{equation}
\label{eq:app-mu-intersection}
    \mu(A\cap C)
    =
    \mu(A^\sharp\cap C).
\end{equation}
Combining
\eqref{eq:app-P},
\eqref{eq:app-R},
\eqref{eq:app-u-intersection}, and
\eqref{eq:app-mu-intersection}, we obtain
\begin{equation}
\label{eq:app-joint-identity}
    P(A\cap C)
    =
    R(A^\sharp\cap C).
\end{equation}
Together with \eqref{eq:app-P-R-clopen}, this implies
\begin{equation}
\label{eq:app-conditional-identity}
    P(A\given C)
    =
    R(A^\sharp\given C)
\end{equation}
whenever $C$ is a nonempty cylinder.

In particular,
\begin{equation}
\label{eq:app-posterior-identity}
    P_n(A)(\omega)
    =
    R(A^\sharp\given[\omega\restrict n]).
\end{equation}

\medskip
\noindent
\emph{Step 5: the posteriors converge to the truth $P$-almost surely.}
Since $R$ is countably additive and gives every nonempty cylinder positive
probability, \cref{lem:standard-facts}(a) applied to $R$ in place of the
measure named there, and with $B=A^\sharp$, yields an $R$-null event $N_A$ such
that
\begin{equation}
\label{eq:app-R-convergence}
    R(A^\sharp\given[\omega\restrict n])
    \longrightarrow
    \ind_{A^\sharp}(\omega)
    \qquad
    \text{for every }\omega\notin N_A.
\end{equation}

Because
\[
    R
    =
    \frac{1}{2}\delta_z+\frac{1}{2}\mu,
\]
the equality $R(N_A)=0$ implies
\[
    z\notin N_A
    \qquad\text{and}\qquad
    \mu(N_A)=0.
\]
Every $d_k$ has strictly positive $\mu$-mass, so $\mu(N_A)=0$ also implies
\[
    d_k\notin N_A
    \qquad
    \text{for every }k.
\]
Hence
\[
    \{k:d_k\in N_A\}=\varnothing,
\]
and therefore
\[
    u(N_A)=0.
\]
Thus
\begin{equation}
\label{eq:app-P-null}
    P(N_A)=0.
\end{equation}

Also,
\[
    u(\{z\})=0,
    \qquad
    \mu(\{z\})=0,
\]
so
\begin{equation}
\label{eq:app-z-null}
    P(\{z\})=0.
\end{equation}

By \eqref{eq:app-Asharp-diff},
\[
    \ind_{A^\sharp}
    =
    \ind_A
    \qquad
    \text{on }\Omega\setminus\{z\}.
\]
Combining
\eqref{eq:app-posterior-identity},
\eqref{eq:app-R-convergence},
\eqref{eq:app-P-null}, and
\eqref{eq:app-z-null}, we obtain
\[
    P_n(A)(\omega)
    \longrightarrow
    \ind_A(\omega)
\]
for every
\[
    \omega\notin N_A\cup\{z\},
\]
and, by finite additivity,
\[
    P(N_A\cup\{z\})=0.
\]
Therefore
\[
    P_n(A)\longrightarrow \ind_A
    \qquad P\text{-almost surely}.
\]
Since $A\in\Borel$ was arbitrary, $P$ converges to the truth almost
surely for every event, even though $P$ is not countably additive.
\end{proof}

\subsection{Agreement with a Countably Additive Probability on Finite Data}\label{sec:finite-data}

\cref{thm:as-learning-not-ca} shows that almost-sure Bayesian learning is too
weak to characterize countable additivity.  The point is not merely that
almost-sure convergence is weaker than almost-uniform convergence.  Rather, by
\eqref{eq:app-conditional-identity}, every finite-data conditional probability
under $P$ coincides with the corresponding conditional probability under a
suitably chosen countably additive probability $R$, computed for an event
modified only at the point $z$.  No amount of finite information distinguishes
the two.  The merely finitely additive component becomes visible on infinitary
events such as
\[
    D=\{d_k:k\in\N\},
\]
where countable additivity fails, but when finite-information posteriors are
calculated it is absorbed into the value of a single $R$-atom.

The order of the quantifiers in that conclusion matters as well. What was
proved above is the eventwise statement
\[
    \forall A\in\Borel\;
    \exists N_A\in\Borel:
    \quad
    P(N_A)=0
    \ \text{and}\
    P_n(A)(\omega)\to\ind_A(\omega)
    \text{ for }\omega\notin N_A.
\]
It does \emph{not} assert the stronger existence of one common probability-one
set on which posterior convergence holds simultaneously for every Borel event,
and the two readings of ``convergence to the truth for all events'' come apart
accordingly.

\section{A Diagonal Strengthening of CHT}\label{sec:diagonal}

CHT is a one-threshold-at-a-time condition.  It gives enough continuity to
control the first crossing of any fixed level, but not the control across
levels needed to choose one small exceptional event for all of them at once.
What is at stake is not finite additivity, which \cref{thm:positive} leaves
compatible with almost-uniform convergence, but whether the error bounds can be
made simultaneous on one probability-large set.

A repair of the characterization would therefore strengthen CHT by a condition
controlling countable unions of hitting events across a sequence of thresholds
tending to zero.  Formulating a minimal and behaviorally transparent version of
that diagonal condition remains an open problem.

\section{What Survives and What Remains Open}\label{sec:conclusion}

\citet[p.~181]{juhl1994realism} urged that countable additivity be scrutinized
as an epistemological axiom rather than accepted as a technical convenience, on
the ground that the convergence theorems it underwrites can fail without it.
\citet[p.~398]{nielsen2021convergence} took up that charge and proposed, for
almost-uniform convergence, a characterization in terms of two behaviorally
interpretable criteria.  This paper shows that the characterization fails in
one direction, and repairs part of what its failure removes.

What survives is this.  The necessity
of \eqref{eq:characterization} holds (\cref{prop:necessity}).  There
are uncountably many merely finitely additive probability functions satisfying
\eqref{eq:posi} whose posteriors converge to the truth almost uniformly
(\cref{thm:positive}), so the statement of Nielsen's Theorem~3 is true although
its published proof is not.  The second claim of his Theorem~5 is true although
its published derivation lapses with the sufficiency direction
(\cref{cor:nielsen-thm5}); one direction of the independence of the two
criteria is restored with it.  Within the family to which the counterexample
belongs, CHT alone implies almost-sure convergence to the truth
(\cref{prop:cht-as}), so the claim
of Nielsen's Corollary~1 holds there.  And almost-sure convergence to the truth
for every event does not characterize countable additivity
(\cref{thm:as-learning-not-ca}).

What fails is sufficiency, the right-to-left direction of \eqref{eq:characterization}.
\cref{thm:counterexample} exhibits a merely finitely additive probability
function that satisfies \eqref{eq:posi}, has the approximation property and
CHT, and converges to the truth almost surely, yet whose posteriors do not
converge almost uniformly.  \cref{sec:diagonal} locates the obstruction.

Four questions are left open.  Do the approximation property and CHT together
imply almost-sure convergence to the truth for an arbitrary probability
function?  That is the claim of Nielsen's Corollary~1;
\cref{rem:corollary} explains why it is now unsupported, and
\cref{prop:cht-as} settles it only within the family the counterexample belongs
to.  Is Nielsen's Theorem~4 true?  \cref{sec:published-proof} removes its proof
without deciding its statement, leaving the other direction of the independence
of the two criteria unsettled.  What condition characterizes almost-sure
convergence to the truth?  \citet[p.~412]{nielsen2021convergence} poses the
problem, and \cref{sec:as-ca} eliminates one candidate.  And what minimal diagonal
strengthening of CHT would restore a characterization of almost-uniform
convergence?  \cref{sec:diagonal} says what such a condition must do without
supplying one.
\bigskip
\backmatter

\bibliography{references}

@article{purves_sudderth_1976,
  author = {Purves, Roger A. and Sudderth, William D.},
  title = {Some Finitely Additive Probability},
  journal = {The Annals of Probability},
  volume = {4},
  number = {2},
  pages = {259--276},
  year = {1976}
}

@book{deFinettiArtofGuessing,
    AUTHOR = {{de Finetti}, Bruno},
     TITLE = {Probability, {I}nduction and {S}tatistics: {T}he {A}rt {o}f
              {G}uessing},
    SERIES = {Wiley Series in Probability and Mathematical Statistics},
 PUBLISHER = {John Wiley \& Sons},
   ADDRESS = {London},
      YEAR = {1972},
   MRCLASS = {60A05},
  MRNUMBER = {MR0440638 (55 \#13512)},
MRREVIEWER = {S. Piccard},
}

@book{rao1983charges,
    AUTHOR = {Bhaskara Rao, K. P. S. and Bhaskara Rao, M.},
     TITLE = {Theory of {C}harges: {A} {S}tudy of {F}initely {A}dditive
              {M}easures},
    SERIES = {Pure and Applied Mathematics},
    VOLUME = {109},
 PUBLISHER = {Academic Press Inc. [Harcourt Brace Jovanovich Publishers]},
   ADDRESS = {New York},
      YEAR = {1983},
   MRCLASS = {28A12 (46E27 46G10)},
  MRNUMBER = {751777 (86f:28006)},
}

@book{billingsley1995prob,
    AUTHOR = {Billingsley, Patrick},
     TITLE = {Probability and {M}easure},
    SERIES = {Wiley Series in Probability and Mathematical Statistics},
   EDITION = {Third},
      NOTE = {A Wiley-Interscience Publication},
 PUBLISHER = {John Wiley \& Sons Inc.},
   ADDRESS = {New York},
      YEAR = {1995},
   MRCLASS = {60-01 (28-01)},
  MRNUMBER = {1324786 (95k:60001)},
}

@book{levy1937theorie,
  title = {Th\'eorie {de} {l'addition} {des} {variables} {al\'eatoires}},
  author = {L{\'e}vy, Paul},
  publisher = {Gauthier-Villars},
  address = {Paris},
  year = {1937}
}

@article{belot2013bayesian,
  title={Bayesian orgulity},
  author={Belot, Gordon},
  journal={Philosophy of Science},
  volume={80},
  number={4},
  pages={483--503},
  year={2013},
  publisher={Cambridge University Press}
}

@article{elga2016bayesian,
  title={Bayesian humility},
  author={Elga, Adam},
  journal={Philosophy of Science},
  volume={83},
  number={3},
  pages={305--323},
  year={2016}
}

@article{nielsen2019obligation,
  title={Obligation, Permission, and {B}ayesian Orgulity},
  author={Nielsen, Michael and Stewart, Rush T},
  journal={Ergo},
  volume={6},
  year={2019},
pages = {59-70},
number = {3}
}

@article{nielsen2021convergence,
  title={Convergence to the Truth Without Countable Additivity},
  author={Nielsen, Michael},
  journal={Journal of Philosophical Logic},
  volume={50},
  number={2},
  pages={395--414},
  year={2021},
  doi={10.1007/s10992-020-09569-2}
}

@article{juhl1994realism,
  author = {Juhl, Cory and Kelly, Kevin T.},
  title = {Realism, Convergence, and Additivity},
  journal = {PSA: Proceedings of the Biennial Meeting of the Philosophy of
             Science Association},
  volume = {1994},
  number = {1},
  pages = {181--189},
  year = {1994},
  publisher = {Cambridge University Press},
  note = {Contributed Papers}
}

@book{jech2003,
  author = {Jech, Thomas},
  title = {Set {T}heory},
  edition = {Third millennium},
  series = {Springer Monographs in Mathematics},
  publisher = {Springer},
  address = {Berlin},
  year = {2003},
  note = {Revised and expanded}
}

@article{wimmers1982,
  author = {Wimmers, Edward L.},
  title = {The {S}helah {$P$}-point independence theorem},
  journal = {Israel Journal of Mathematics},
  volume = {43},
  number = {1},
  pages = {28--48},
  year = {1982}
}

\end{document}